\declaretheorem[numberwithin=section]{theorem}
\declaretheorem[numbered=no, name=Theorem]{theorem*}
\declaretheorem[numberlike=theorem]{lemma,proposition,problem,corollary,conjecture}
\tikzset{
dot/.style = {circle, fill, minimum size=#1,
              inner sep=0pt, outer sep=0pt},
dot/.default = 4pt 
}
\tikzset{
cdot/.style = {circle, fill=#1, minimum size=4pt,
              inner sep=0pt, outer sep=0pt},
cdot/.default = black 
}
\renewcommand{\vert}[2]{\node[dot] (#1) at (#2) {};}
\newcommand{\cvert}[3]{\node[cdot={#3}] (#1) at (#2) {};}
\newcommand{\labvert}[3]{\node[dot, label={[label distance=-2pt] #3:{$#1$}}] (#1) at (#2) {};}
\newcommand{\alabvert}[4]{\node[dot, label={[label distance=-2pt] #4:{$#3$}}] (#1) at (#2) {};}
\newcommand{\aclabvert}[5]{\node[cdot={#5}, label={[label distance=-2pt] #4:{$#3$}}] (#1) at (#2) {};}
\newcommand{\edge}[2]{\draw (#1) -- (#2);}
\newcommand{\labdotedge}[4]{\draw[dashed] (#1) -- node[midway, #4] {$#3$} (#2);}
\newcommand{\dilabedge}[4]{\draw[->, shorten >=4pt] (#1) -- node[midway, #4] {$#3$} (#2);}
\newcommand{\urpath}[2]{\draw[decorate,decoration={snake,amplitude=1.5,segment length=10, post length=0,pre length=0}] (#1) -- (#2);}
\newcommand{\laburpath}[4]{\draw[decorate,decoration={snake,amplitude=1.5,segment length=10, post length=0,pre length=0}] (#1) -- node[midway, #4] {$#3$} (#2);}
\newcommand{\spoke}[3]{\draw[-|, shorten >=4pt] (#1) -- ($(#2)!0.5!(#3)$);}
\newcommand{\manspoke}[2]{\draw[-|, shorten >=4pt] (#1) -- (#2);}
\newcommand{\rung}[4]{\draw[|-|, shorten >=4pt, shorten <=4pt] ($(#1)!0.5!(#2)$) -- ($(#3)!0.5!(#4)$);}
\newcommand{\dicoledge}[3]{\draw[->, shorten >=4pt, color=#3] (#1) -- (#2);}
\newcommand{\dicolurpath}[3]{\draw[->, decorate, decoration={snake, amplitude=1.5, segment length=10, post length=1mm, pre length=0}, shorten >=4pt, color=#3] (#1) -- (#2);}
\newcommand{\Cf}{\mathfrak{C}}
\newcommand{\Fc}{\mathcal{F}}
\newcommand{\Ff}{\mathfrak{F}}
\newcommand{\Gf}{\mathfrak{G}}
\newcommand{\Hf}{\mathfrak{H}}
\newcommand{\Sc}{\mathcal{S}}
\newcommand{\Sf}{\mathfrak{S}}
\newcommand{\Tc}{\mathcal{T}}
\newcommand{\Z}{\mathbb{Z}}
\newcommand{\ud}{\mathbf{undefined}}
\DeclarePairedDelimiter{\abs}{\lvert}{\rvert}
\DeclareMathOperator{\Py}{Py}
\DeclareMathOperator{\W}{W}
\DeclareMathOperator{\im}{im}
\DeclareMathOperator{\cl}{cl}
\DeclareMathOperator{\conn}{conn}
\title{Pathographs and some (un)decidability results}
\author{Daniel Carter\thanks{Princeton University, Fine Hall, Washington Road, Princeton, NJ 08544-1000, USA}\and Nicolas Trotignon\thanks{CNRS, ENS de Lyon, Universit\'e Claude Bernard Lyon 1, LIP UMR 5668, 69342 Lyon Cedex 07, France}}
\date{}
\begin{document}

\maketitle

\begin{abstract}
We introduce \textit{pathographs} as a framework to study graph classes defined by forbidden structures, including forbidding induced subgraphs, minors, etc. Pathographs approximately generalize s-graphs of L\'ev\^eque--Lin--Maffray--Trotignon by the addition of two extra adjacency relations: one between subdivisible edges and vertices called \textit{spokes}, and one between pairs of subdivisible edges called \textit{rungs}. We consider the following decision problem: given a pathograph $\Hf$ and a finite set of pathographs $\Fc$, is there an $\Fc$-free realization of $\Hf$? This may be regarded as a generalization of the ``graph class containment problem'': given two graph classes $S$ and $S'$, is it the case that $S\subseteq S'$? We prove the pathograph realization problem is undecidable in general, but it is decidable in the case that $\Hf$ has no rungs (but may have spokes), or if $\Fc$ is closed under adding edges, spokes, and rungs. We also discuss some potential applications to proving decomposition theorems.
\end{abstract}

\section{Introduction}

Graphs in this paper are finite and simple. Many questions in graph theory can be reformulated into the following general form:

\begin{problem}\label{prob:general}
Given two classes of graphs $S$ and $S'$ defined by forbidding some graph structures, is it the case that $S\subseteq S'$?

Equivalently, given two sets of graph structures $A$ and $B$, is it true that every graph containing a structure in $A$ necessarily contains a structure in $B$?
\end{problem}

Here, ``graph structures'' can mean many things, including (induced) subgraphs, (induced) minors, (induced) topological minors, and ``Truemper configurations'', among others. We define those containment relations and structures that are less familiar:
\begin{itemize}
    \item A graph $H$ is said to be an \textit{induced minor} of $G$ if $H$ can be obtained from $G$ by a series of vertex deletions and edge contractions (but not edge deletions), or equivalently if there are disjoint connected subgraphs $\{C_v\}_{v\in V(H)}$ of $G$ such that some vertex in $C_v$ is adjacent to some vertex in $C_{v'}$ if and only if $v$ and $v'$ are adjacent in $H$.
    \item A graph $H$ is said to be an \textit{induced topological minor} of $G$ if $H$ can be obtained from $G$ by a series of vertex deletions and replacing vertices of degree 2 by edges, or equivalently if a subdivision of $H$ is an induced subgraph of $G$.
\end{itemize}

We recall the definitions of the four \textit{Truemper configurations}:
\begin{itemize}
    \item A \textit{theta} consists of two vertices $a$ and $b$ and three pairwise disjoint nonadjacent induced $a$-$b$ paths $P_1,P_2,P_3$, all of length at least 2.
    \item A \textit{pyramid} consists of a vertex $a$, triangle $b_1b_2b_3$, and three disjoint nonadjacent induced paths $P_i$ from $a$ to $b_i$, $i\in \{1,2,3\}$, with $P_1$ having length at least 1 and $P_2$ and $P_3$ having length at least 2.
    \item A \textit{prism} consists of two triangles $a_1a_2a_3$ and $b_1b_2b_3$ and three disjoint nonadjacent induced paths $P_i$ from $a_i$ to $b_i$, $i\in \{1,2,3\}$, all of length at least 1.
    \item A \textit{wheel} consists of an induced cycle of length at least 4 plus a vertex with at least 3 neighbors in the cycle.
\end{itemize}

One instance of Problem~\ref{prob:general} is the following. The class of graphs with treewidth at most $k$ is given by forbidding a (finite) set of graph minors. Which induced subgraphs one must forbid in order to get a class of bounded treewidth? This question has, for instance, been investigated in the paper series starting with~\cite{tara}.

Other examples come from designing polynomial-time algorithms to detect given structures in graphs, particularly induced minors and Truemper configurations. Several key lemmas state that the presence of some induced minor forces the presence of certain induced subgraphs. For instance, in~\cite{DBLP:conf/iwoca/DallardDHMPT24}, it is proved that the presence of $K_{2, 3}$ as an induced minor is equivalent to the presence of some Truemper configurations. In~\cite{chktw}, it is proved that if $G$ contains $K_{3, 4}$ an induced minor, then it must contain a triangle or a theta. In contrast, it is proved in \cite{DBLP:journals/jgt/SintiariT21} that for all integers $t$, there exists a (theta, triangle)-free graph containing $K_t$ as an induced minor (or equivalently as a minor). This suggests the following specific instances of Problem~\ref{prob:general} are of interest:

\begin{problem}\label{prob:forcer}
Given a graph $H$, is there a (theta, triangle)-free graph $G$ that contains $H$ as an induced minor?
\end{problem}

\begin{problem}\label{prob:Fforcer}
Given a finite set of graphs $\Fc$ and a graph $H$, is there an $\Fc$-free graph $G$ that contains $H$ as an induced minor?
\end{problem}

Many other variations can be given. We do not know whether Problems~\ref{prob:forcer} or~\ref{prob:Fforcer} are decidable, or indeed if Problem~\ref{prob:general} is decidable (for some specific notions of graph structure). To investigate such questions, it would be useful to have a framework to express various kinds of containment relations and potential substructures in a common language. We propose here such a general framework that we call \textit{pathographs}.

Roughly speaking, a \textit{pathograph} $\Gf$ is a 6-tuple $(V, U, E, S, R, \pi)$, where:
\begin{itemize}
    \item $V$ is the set of vertices of $\Gf$;
    \item $U$ is the set of ``urpaths''\footnote{From ``ur-'', meaning primordial.} of $\Gf$, that are kind of edge to be subdivided;
    \item $E\subseteq \binom{V}{2}$ is the set of edges of $\Gf$, encoding which pairs of vertices are adjacent in $\Gf$;
    \item $S\subseteq V\times U$ is the set of ``spokes'' of $\Gf$, encoding which vertices must be adjacent to internal vertices of urpaths in realizations of $\Gf$;
    \item $R\subseteq \binom{U}{2}$ is the set of ``rungs'' of $\Gf$, encoding which pairs of urpaths have adjacent internal vertices in realizations of $\Gf$;
    \item $\pi:U\to \binom{V}{2}$ identifies the endpoints of urpaths.
\end{itemize}

A precise definition is given in Section~\ref{sec:pathograph}, and we also define the containment relation of pathographs and the notion of a \textit{realization} of a pathograph (roughly speaking, this is a graph formed by replacing all urpaths of a graph by induced paths, where the edges incident to internal vertices on these paths is restricted by the presence or absense of spokes and rungs incident to the corresponding urpath). We also check that pathographs can be used to encode all of the graph containment relations and particular structures we are interested in:

\begin{restatable}{theorem}{encode}\label{thm:encode}
Let $H$ be a graph. Then there are finite sets of pathographs
$\Sf_i(H)$, $i\in\{1,2,3,4,5,6\}$, such that:
\begin{enumerate}
    \item $G$ contains $H$ as a subgraph if and only if $G$ contains some $\Hf\in \Sf_1(H)$.
    \item $G$ contains $H$ as an induced subgraph if and only if $G$ contains some $\Hf\in \Sf_2(H)$.
    \item $G$ contains $H$ as a minor if and only if $G$ contains some $\Hf\in \Sf_3(H)$.
    \item $G$ contains $H$ as an induced minor if and only if $G$ contains some $\Hf\in \Sf_4(H)$.
    \item $G$ contains $H$ as a topological minor if and only if $G$ contains some $\Hf\in \Sf_5(H)$.
    \item $G$ contains $H$ as an induced topological minor if and only if $G$ contains some $\Hf\in \Sf_6(H)$.
\end{enumerate}

Additionally, there are finite sets of pathographs $\Theta$, $\Py$, $\Pr$, and $\W$ such that:
\begin{enumerate}[start=7]
    \item $G$ contains an induced theta if and only if $G$ contains some $\Hf\in\Theta$.
    \item $G$ contains an induced pyramid if and only if $G$ contains some $\Hf\in\Py$.
    \item $G$ contains an induced prism if and only if $G$ contains some $\Hf\in\Pr$.
    \item $G$ contains an induced wheel if and only if $G$ contains some $\Hf\in\W$.
\end{enumerate}
\end{restatable}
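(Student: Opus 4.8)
The plan is to treat all ten assertions by a single template. For each target --- a graph $H$ under one of the six containment relations, or one of the four Truemper configurations --- I will first write down a small ``skeleton'' pathograph, or a bounded list of them, that transcribes the defining structure, using urpaths exactly for the parts of the structure whose length is not bounded. I will then form the required set $\Sf_i(H)$ (resp.\ $\Theta,\Py,\Pr,\W$) by closing each skeleton under all legal choices of the remaining adjacency data. The observation that makes this work is that, once the vertex set $V$ and the urpath set $U$ of a pathograph have been fixed, there are only finitely many possibilities for $E\subseteq\binom{V}{2}$, $S\subseteq V\times U$ and $R\subseteq\binom{U}{2}$; hence any step of the form ``add edges, spokes and rungs in all possible ways'' produces a finite set, provided $V$ and $U$ were bounded. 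With the sets in hand, each ``if and only if'' reduces to two routine verifications: every realization of every listed pathograph does contain the target (read the model off the skeleton), and conversely any occurrence of the target in a graph $G$ can be normalized so that the subgraph of $G$ that it spans is, on the nose, a realization of one of the listed pathographs.

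The first two relations need no urpaths: for the subgraph relation $\Sf_1(H)$ is the finite set of all graphs obtained from $H$ by adding edges, regarded as urpath-free pathographs, and for the induced subgraph relation $\Sf_2(H)=\{H\}$. For the minor and induced minor relations I use the normalization of a minor model in which each branch set $C_v$ is a minimal connected subgraph containing a prescribed, bounded set of contact vertices; such a $C_v$ is a bounded number of bounded ``dense blocks'' linked by chordless paths, all of whose internal vertices have exactly two neighbours inside the whole model. Naming the non-degree-two vertices and the contacts of every branch set yields a bounded named vertex set; the chordless linking paths become the urpaths; the edges record the block structure and the contacts. For the minor relation I then close under all extra edges, spokes and rungs; for the induced minor relation I add nothing. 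For the (induced) topological minor relations the skeleton is simply $H$ with each edge either retained or promoted to an urpath, iterating over the subsets of $E(H)$, again closed under extra adjacency data in the non-induced case; in the induced case the normalization of a minimal occurrence (replace each branch path by a shortest path between its ends) makes the branch paths chordless, which is what the framework requires of an urpath.

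For the Truemper configurations the skeletons are almost literal transcriptions. A theta is two named vertices joined by three urpaths and carrying no rungs --- it is the absence of a rung that forces the three paths to be pairwise non-adjacent in a realization --- and the length hypotheses are automatic since an urpath realizes with at least one internal vertex. A pyramid and a prism are handled the same way, with the triangle edges placed among the named apices and, for the pyramid, two skeletons according to whether the short path is an edge or an urpath, while a prism yields a few skeletons according to which of the three connecting paths are edges. In each case one checks that every realization is of the prescribed kind (using that no rungs makes the three paths non-adjacent and that the urpath lengths make the cycles long enough) and that a minimal occurrence in $G$ spans a realization.

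The wheel is the case I expect to cause the real difficulty, and it is where the template is least mechanical. The trouble is that the centre of a wheel may have an unbounded number of neighbours on the rim --- this is already so for $W_n$ --- so one cannot name them all, nor can one always record such a neighbourhood by a bounded number of spokes when the centre meets the interior of an urpath only partially. The plan is to cut the rim into three arcs at three neighbours of the centre, or in a complementary regime at three non-neighbours, chosen so that on each arc the centre behaves uniformly, so that a single spoke per arc suffices; the heart of the matter is to prove that three such cut vertices always exist, and more generally to confront the possibility of partial incidences between a named vertex and an urpath. Once this is settled, $\W$ is the finite list of skeletons so produced, and the two directions of the equivalence go through exactly as in the other cases. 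The same partial-incidence phenomenon is, I expect, the one genuine obstacle lurking behind the routine-looking normalizations in the earlier cases as well, and handling it uniformly is the technical core of the theorem.
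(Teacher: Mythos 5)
Your template matches the paper's construction for items 1--6 and 7--9 (the paper takes $\Sf_1(H)=\cl_\sim(\{H\})$, $\Sf_2(H)=\{H\}$, $\Sf_5(H)=\cl_\sim(\cl_U(\{H\}))$, $\Sf_6(H)=\cl_U(\{H\})$, and for (induced) minors a finite list of bounded branch-set skeletons with urpaths, essentially your normalization), so most of the proposal is fine. The genuine gap is item 10, which you explicitly leave unresolved (``once this is settled\dots''), and the difficulty you describe there is not real: it rests on a misreading of the spoke semantics. A spoke $(v,u)$ does not demand any uniform behaviour of $v$ along the path realizing $u$; it only requires \emph{at least one} edge from $v$ to the interior of that path, while the absence of a spoke forbids all such edges. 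So ``partial incidence'' between a named vertex and an urpath is exactly what a single spoke encodes, and the unbounded number of hub--rim adjacencies needs no cutting of the rim into arcs on which the hub is uniform. The paper's $\W$ consists of just two pathographs $\W_1,\W_2$: a hub joined by edges to two nonadjacent rim vertices, the rest of the rim given by two urpaths between those vertices (no rung, so the two arcs stay nonadjacent), and a spoke from the hub to one urpath or to both. Given a wheel with centre $v$ and induced cycle $C$ of length at least $4$, among the at least three neighbours of $v$ on $C$ two are nonadjacent (an induced cycle of length at least $4$ has no triangle); taking these as the two named rim vertices, $C\cup\{v\}$ induces a realization of $\W_1$ or $\W_2$ according to whether $v$ has neighbours in the interior of one arc or of both, and conversely every realization of $\W_1$ or $\W_2$ visibly contains a wheel. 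Your alternative plan of cutting at three neighbours of the hub could also be made to work, but only after discarding the ``uniformity'' requirement and adding spokes in all possible patterns; as written, no existence-of-cut-vertices lemma is supplied, and none of the claimed ``technical core'' is carried out.

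A secondary point: for induced minors you say you ``add nothing'' beyond edges recording blocks and contacts. Taken literally this fails, because in an induced-minor model the adjacency between branch sets $C_i$ and $C_j$ with $v_iv_j\in E(H)$ may be witnessed only at internal vertices of the chordless linking paths inside the branch sets; since a pathograph with no spoke or rung there forbids such edges, the subgraph of $G$ spanned by the model need not realize any of your skeletons. The paper avoids this by enumerating, for each pair $i,j$ with $v_iv_j\in E(H)$, all ways of placing at least one edge, spoke, or rung between the two parts (and forbidding all three otherwise); your closure ``under all legal choices of the remaining adjacency data'' from the opening paragraph is the right fix, but it contradicts the later ``I add nothing'' and should be stated as the actual construction.
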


A pathograph is \textit{$\Fc$-free}, where $\Fc$ is a set of pathographs, if it does not contain any $\Ff\in\Fc$, under the pathograph containment relation defined in Section~\ref{sec:pathograph}. Now we introduce the main problem considered in this paper:

\begin{problem}\label{prob:realization}
Given a pathograph $\Hf$ and finite set of pathographs $\Fc$, is there an $\Fc$-free realization of $\Hf$?
\end{problem}

If this problem were decidable, this would give an algorithm to decide Problem~\ref{prob:general}, so long as the graph classes considered are defined using only the ten containment relations and structures listed in Theorem~\ref{thm:encode}. This is because there is an $\Fc$-free realization of $\Hf$ if and only if $S\not\subseteq S'$ where $S$ is the class of $\Fc$-free graphs and $S'$ is the class of $\Hf$-free graphs. In particular, it would give an algorithm to decide Problems~\ref{prob:forcer} and~\ref{prob:Fforcer}, and many others.

Alas, Problem~\ref{prob:realization} is not decidable in general. In Section~\ref{sec:undecidable}, we prove:

\begin{restatable}{theorem}{undecidable}\label{thm:undecidable}
Problem~\ref{prob:realization} is undecidable even if $\Hf$ has only one rung and all $\Ff\in\Fc$ have no urpaths, i.e.\ are conventional graphs.
\end{restatable}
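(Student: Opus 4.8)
The plan is to reduce from the halting problem. Given a Turing machine $M$, I would construct a pathograph $\Hf_M$ with exactly one rung, together with a finite set $\Fc_M$ of ordinary graphs, such that $\Hf_M$ admits an $\Fc_M$-free realization if and only if $M$ halts on the empty input. This suffices: the map $M\mapsto(\Hf_M,\Fc_M)$ is plainly computable, halting is undecidable, and ``$\Hf$ has an $\Fc$-free realization'' is semidecidable --- a realization is a finite graph, and one can check in finite time whether a given finite graph is a realization of $\Hf$ and is $\Fc$-free (en route one checks that pathograph containment of a finite pathograph in a finite graph is decidable; for two ordinary graphs it should simply be the induced-subgraph relation). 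Since the instances produced satisfy the stated restrictions, Theorem~\ref{thm:undecidable} follows; note that one rung is the best one can hope for, as the rung-free case is decidable.

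The realization we aim for should encode a space--time diagram of a halting run of $M$, laid out at some finite width $N$ that the realization is free to pick. The pathograph $\Hf_M$ has two urpaths $U_1,U_2$ with prescribed endpoints, a constant number of extra ``palette'' vertices, spokes joining each palette vertex to both urpaths, and a single rung $\{U_1,U_2\}$. In a realization, $U_1$ is subdivided into a long induced path that snakes back and forth, one row per configuration, so that cells in the same row become consecutive along the path; through the spokes, the palette vertices tag each internal vertex of $U_1$ with a symbol of a finite cell alphabet (a tape letter, optionally carrying the head and a state). The urpath $U_2$ is subdivided into a parallel snaking path offset by about half a row, so that the $U_2$-cell sitting over column $j$ between times $t$ and $t+1$ is meant to be adjacent to the $U_1$-cells $(t,j)$ and $(t+1,j)$. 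The one rung provides a single edge between the interiors of $U_1$ and $U_2$, and the graphs in $\Fc_M$ are built so that $\Fc_M$-freeness forces this edge to propagate into the entire intended ``brick-wall'' matching of the two paths. With that matching present, each cell has its horizontal neighbours (consecutive on a path) and its vertical neighbours (at distance two through $U_2$) all within bounded distance, so the local transition rule of $M$ --- the content of $(t+1,j)$ is a function of that of $(t,j-1),(t,j),(t,j+1)$ --- becomes a bounded constraint enforceable by forbidding a finite family of induced subgraphs. Further bounded gadgets in $\Fc_M$ fix the first row to be the initial configuration, keep the grid's right boundary in one column (again by propagation along the matching, which also forces the two snakes to have equal width and not drift), make the snake turn correctly at row ends, and require that the finite path terminate only at a configuration in a halting state. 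Then an $\Fc_M$-free realization exists exactly when some finite halting run of $M$ fits in some width $N$, i.e.\ exactly when $M$ halts.

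The hard part will be the single-rung propagation gadget: one must design $\Fc_M$ so that one arbitrarily placed rung edge is forced, purely through the absence of forbidden induced subgraphs, to grow into precisely the intended brick-wall matching --- not stopping early, not sprouting spurious extra edges --- and so that the matching stays correctly registered against a row/column grid that is only ever locally visible. The companion delicate points are getting the two interleaved snakes to close up consistently at the row-end turns, and ruling out the degenerate realizations (paths too short, mislabelled cells, the head running off the chosen width --- which only spoils that one value of $N$ and not the reduction). Each of these is handled by a bounded-size forbidden configuration, but checking that the assembled finite list $\Fc_M$ does exactly what is wanted, and no more, is where the actual work lies.
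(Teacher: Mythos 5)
Your overall frame is sound and genuinely different from the paper's: you reduce from the halting problem and try to lay a space--time diagram of a run along one snaking path, whereas the paper reduces from the \emph{periodic Wang tiling} problem. Since the pathograph realization problem is semidecidable (enumerate finite graphs and test each), a reduction from halting is the right level of difficulty, so there is no complexity-theoretic obstruction. The genuine gap is exactly the point you defer as ``the hard part'': the single-rung propagation gadget that is supposed to grow one cross edge into the full brick-wall matching is never constructed, and it is not a routine verification --- it is the entire content of the theorem in your architecture. Forbidden graphs in $\Fc$ are of bounded size and can only police bounded-radius patterns, while the registration you need is global: each $U_2$ cell must be adjacent to $(t,j)$ and $(t+1,j)$, two vertices that lie an \emph{unbounded} distance (about a row length) apart along $U_1$; the rows must have equal length, the turns must occur in one column, and no sheared or spurious cross edges may appear between cells in far-apart columns. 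Since the tags come from a fixed finite palette, cells in different columns whose positions agree modulo the palette period are locally indistinguishable, so off-by-a-period shears and drift cannot be excluded by any finite list of bounded forbidden induced subgraphs without a propagation argument you have not supplied. The same goes for forcing every internal path vertex to carry exactly one tag and for making the forbidden graphs anchor to the genuine palette rather than to look-alike substructures inside the long paths: in the paper this alone requires the clique $C$ of size $3K$, the vertices $z_i$, a proof that $C$ is the unique such clique, and the type 9/10 forbidden graphs together with a delicate analysis of long induced paths. Both directions of your equivalence (any $\Fc_M$-free realization decodes to a halting run; the intended encoding of a halting run contains no forbidden graph) hinge on this unbuilt machinery.

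It is worth seeing how the paper's design avoids the synchronization problem altogether, which is precisely why it reduces from periodic tiling rather than halting. There, $\Hf$ is (after decoding the gadgets) two cycles, each containing one urpath, with the single rung between the two urpaths; a realization consists of two cycles of free lengths $a$ and $b$, and a two-vertex forbidden pattern forces every block of one cycle to be adjacent to every block of the other. The torus grid $\Z_a\times\Z_b$ thus arises as the \emph{product} of the two cycles: the tile at position $(i,j)$ is read off the local edge pattern between block $i$ and block $j$, and the horizontal and vertical matching rules become constraints on pairs of consecutive blocks, which are locally detectable because consecutive means adjacent along a cycle. No long-range registration between the two paths is ever needed, and periodicity is automatic because the cycles close up. To salvage your proposal you would either have to construct and verify the synchronization gadget in full --- which is a substantial project, essentially a ``seed forces a grid'' argument carried out inside the induced-subgraph formalism --- or switch to a product-style encoding of this kind.
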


The proof of Theorem~\ref{thm:undecidable} is by a reduction from the periodic Wang tiling problem. In particular, the pattern of edges between the two paths $P$ and $Q$ will encode the unit cell of a periodic tiling, and the set $\Fc$ will encode the Wang tiling rules, so that the tiling is valid if and only if the associated graph is $\Fc$-free. The idea of using Wang tiles was inspired by Braunfeld's proof that it is undecidable to determine if a hereditary graph class has the joint embedding property~\cite{braunfeld}.

This result can be restated using only conventional graph terminology as follows:
\begin{restatable}{corollary}{paths}\label{cor:paths}
The following problem is undecidable. Let $\Fc$ be a finite set of graphs, $H$ a graph with four chosen vertices $a,b,c,d$ and two chosen subsets of vertices $U_1$ and $U_2$. Does there exist an $\Fc$-free graph formed by adding to $H$ an induced path $P_1$ from $a$ to $b$ and a path $P_2$ from $c$ to $d$, so that the neighbors of the internal vertices of $P_i$ are restricted to lie in $P_1\cup P_2\cup U_i$?
\end{restatable}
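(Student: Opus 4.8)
The plan is to derive Corollary~\ref{cor:paths} directly from Theorem~\ref{thm:undecidable} by translating the pathograph-theoretic statement into conventional graph language. Since Theorem~\ref{thm:undecidable} already establishes undecidability for an instance $(\Hf,\Fc)$ in which $\Hf$ has exactly one rung and every $\Ff\in\Fc$ is an ordinary graph (no urpaths, spokes, or rungs), it suffices to unpack what a realization of such an $\Hf$ looks like and check that ``$\Hf$ has an $\Fc$-free realization'' is precisely the existence statement in the corollary.

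First I would recall from Section~\ref{sec:pathograph} the structure of the pathograph $\Hf=(V,U,E,S,R,\pi)$ appearing in Theorem~\ref{thm:undecidable}. It has exactly one rung, so $|R|=1$; let that rung join two urpaths $u_1,u_2\in U$. The corollary's graph $H$ is obtained from $\Hf$ by taking the vertex set $V$ with edge set $E$ (i.e.\ forgetting the urpaths), setting $\{a,b\}=\pi(u_1)$ and $\{c,d\}=\pi(u_2)$, and letting $U_i=\{v\in V: (v,u_i)\in S\}$ be the set of vertices joined to $u_i$ by a spoke. There may a priori be other urpaths in $U$ besides $u_1,u_2$, but since Theorem~\ref{thm:undecidable} is about a \emph{specific} reduction, I would either (a) observe that the construction in Section~\ref{sec:undecidable} uses only two urpaths, or (b) note that any urpath not incident to a rung can be absorbed into $H$ in advance: replacing such an urpath by a fixed long induced path with the prescribed spoke-neighbourhoods changes neither the realization question nor $\Fc$-freeness in an essential way, because $\Fc$ consists of ordinary graphs and one can pad $H$ so that no member of $\Fc$ can reach across. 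Assuming the cleaner situation where $U=\{u_1,u_2\}$, a realization of $\Hf$ is exactly a graph $G$ obtained from $H$ by adding an induced path $P_1$ from $a$ to $b$ and an induced path $P_2$ from $c$ to $d$, where the neighbours of the internal vertices of $P_i$ are confined to $V(P_1)\cup V(P_2)\cup U_i$ — the confinement to $U_i$ coming from the spoke set, and the permission for $P_1$ and $P_2$ internal vertices to be adjacent coming from the single rung $\{u_1,u_2\}$. (One should double-check the length conventions: urpaths in a realization become induced paths of some positive length, matching ``add an induced path''; if the construction needs $P_2$ to possibly be non-induced, that is permitted by the corollary's asymmetric phrasing, and is harmless.)

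Next I would verify the equivalence of $\Fc$-freeness on both sides. Because every $\Ff\in\Fc$ has no urpaths, spokes, or rungs, the pathograph containment relation of Section~\ref{sec:pathograph}, restricted to the pair $(G,\Ff)$ with $G$ an ordinary graph (realization), collapses to ordinary induced-subgraph containment — this is exactly statement~(2) of Theorem~\ref{thm:encode} applied to each $\Ff$, or more directly a special case of the definition of containment. Hence ``$G$ is an $\Fc$-free realization of $\Hf$'' in the sense of Problem~\ref{prob:realization} says precisely that the graph $G$ described in the previous paragraph contains no member of $\Fc$ as an induced subgraph, which is the conclusion of Corollary~\ref{cor:paths}. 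Therefore the map $(\Hf,\Fc)\mapsto (H,a,b,c,d,U_1,U_2,\Fc)$ is a computable reduction from the (undecidable) instances of Problem~\ref{prob:realization} identified in Theorem~\ref{thm:undecidable} to the decision problem in the corollary, and an algorithm for the latter would decide the former. This gives undecidability.

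The main obstacle I anticipate is purely bookkeeping rather than conceptual: making sure that the translation between ``realization of an urpath'' and ``added induced path with restricted neighbourhood'' matches the formal definition in Section~\ref{sec:pathograph} on every point — in particular (i) whether internal vertices of an urpath may be adjacent to its own endpoints or only to the prescribed spoke-vertices, (ii) whether two urpaths sharing a rung are \emph{required} to have adjacent internal vertices or merely \emph{permitted} to, and (iii) the minimum lengths forced on the realized paths. If the official definition permits rather than requires rung-adjacencies, the corollary's phrasing (``a path $P_2$ \ldots so that the neighbors \ldots are restricted to lie in \ldots'') already encodes ``permitted,'' so the statements line up; if it requires them, one adds the clause to the corollary or notes it is subsumed. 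I would also make sure the reduction preserves finiteness of $\Fc$ (immediate, since $\Fc$ is unchanged) and that $H$ is a genuine finite simple graph (immediate from $\Hf$ being a pathograph). None of these steps require new ideas beyond careful citation of the definitions and of Theorems~\ref{thm:encode} and~\ref{thm:undecidable}.
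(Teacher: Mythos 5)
Your proposal is correct and follows essentially the same route as the paper: take the pair $(\Hf,\Fc)$ from the proof of Theorem~\ref{thm:undecidable} (which has exactly two urpaths joined by the single rung), let $H$ be $\Hf$ with its urpaths deleted, let $a,b,c,d$ be the urpath endpoints and $U_1,U_2$ the spoke-neighbourhoods, and observe that the corollary's problem on $(\Fc,H,a,b,c,d,U_1,U_2)$ coincides with the pathograph realization problem on $(\Hf,\Fc)$. Your bookkeeping caveats (extra urpaths, whether spokes and rungs require or merely permit adjacencies) are reasonable points of care but do not change the argument, which is the one the paper gives.
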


Additionally, Problem~\ref{prob:realization} remains undecidable even if only $\Hf$ or $\Fc$ is allowed to vary, but not both. See Section~\ref{sec:variants} for a precise statement.

Based on these results, we believe the following is true, though we are unable to prove it:

\begin{conjecture}
The following problem is undecidable: let $\Sc$ and $\Sc'$ be families of graphs defined by forbidding finitely many (induced) subgraphs, (induced) minors, (induced) topological minors, and/or Truemper configurations; is $\Sc\subseteq \Sc'$?
\end{conjecture}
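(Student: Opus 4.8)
The natural attempt is to reduce from Theorem~\ref{thm:undecidable}: it already supplies, from a Wang-tiling instance, a pathograph $\Hf$ using exactly one rung together with a finite set $\Fc$ of ordinary graphs, and ``$G$ is $\Fc$-free'' is literally ``$G$ forbids a finite list of induced subgraphs.'' So $\Sc$ (the $\Fc$-free graphs) is immediately of the required form, and the entire task is to express ``$G$ is $\Hf$-free,'' for that specific one-rung $\Hf$, as ``$G$ forbids finitely many of the listed structures.'' By Theorem~\ref{thm:encode} every listed relation has a finite pathograph encoding, and inspecting these encodings, the ones whose natural encoding is \emph{not} rung-free are the minor-type relations --- most usefully the induced-minor relation: a branch set must be connected but need not be induced, so after replacing it by a subdivided tree one must allow chords between the legs, and such chords are rungs. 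The plan is therefore to pick a fixed graph $H^{\star}$ for which ``$G$ contains $H^{\star}$ as an induced minor,'' possibly combined with finitely many extra forbidden induced subgraphs, is equivalent to ``$G$ contains $\Hf$,'' and then take $\Sc'$ to be defined by forbidding $H^{\star}$ as an induced minor plus those auxiliary induced subgraphs.

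Concretely: (1) take the Wang-tiling instance and, via Corollary~\ref{cor:paths}, read Theorem~\ref{thm:undecidable} as the statement that it is undecidable whether one can attach to a fixed graph two paths $P_1$ (induced) and $P_2$ with at least one pair of adjacent internal vertices, realizing a prescribed contact pattern that encodes the tiling, without creating a forbidden induced subgraph; (2) replace the lone rung by a small ``ladder''/prism-type target graph $H^{\star}$ whose induced-minor models are forced to consist of two parallel, mutually adjacent induced paths, so that ``contains $H^{\star}$ as an induced minor'' takes over the role of the one-rung pathograph; (3) add to the forbidden-induced-subgraph families finitely many bounded-size graphs that pin down the branch-set structure, force the two parallel paths to carry exactly the tiling-edge pattern, and detect a tiling violation; (4) check, exactly as in the proof of Theorem~\ref{thm:undecidable}, that $\Sc\not\subseteq\Sc'$ holds if and only if the tile system admits a (periodic) tiling. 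An equivalent but possibly cleaner route is to redo the Wang-tiling reduction directly in the language of induced minors plus forbidden induced subgraphs rather than routing through pathographs at all.

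The main obstacle --- and the reason this remains only a conjecture --- is making the induced-minor gadget \emph{rigid}. Forbidding an induced minor is far coarser than forbidding one pathograph: a graph may contain $H^{\star}$ as an induced minor through branch sets that are long, tree-like and placed unexpectedly, or may avoid it by a local rerouting that no bounded forbidden-subgraph family can detect. One must engineer $H^{\star}$ together with the auxiliary forbidden graphs so that inside every graph of $\Sc$ an induced-minor model of $H^{\star}$ is essentially unique up to the data it is meant to carry (a contact pattern along two parallel paths), while graphs carrying a \emph{valid} pattern are themselves not forbidden --- and it is exactly this simultaneous ``rigid enough but not too rigid'' requirement that I do not see how to meet. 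If such a gadget can be built, the rest should be routine bookkeeping atop the reduction already done for Theorem~\ref{thm:undecidable}; a fallback would be to try prism- or wheel-type targets instead, since these natively provide parallel paths and hub vertices, but they face the same rigidity question.
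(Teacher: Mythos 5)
This statement is not a theorem in the paper at all: it is stated as a conjecture, and the authors explicitly say they are unable to prove it. So there is no paper proof to compare your proposal against, and your proposal — which you yourself frame as a plan with an unresolved core difficulty — does not close the gap either. To be clear, what you wrote is a reasonable sketch of the most natural attack (reduce from Theorem~\ref{thm:undecidable}, keep $\Sc$ as the $\Fc$-free graphs, and try to express ``contains $\Hf$'' for the one-rung pathograph $\Hf$ via an induced-minor target $H^{\star}$ plus finitely many auxiliary forbidden induced subgraphs), but it is not a proof, and the obstruction you name is exactly the one that keeps this a conjecture.

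Two points to sharpen why the gap is genuine. First, the direction you need is the hard one: Theorem~\ref{thm:encode} encodes the ten listed structures \emph{as} pathographs, not arbitrary pathographs as combinations of the ten structures, and the $\Hf''$ from the undecidability proof has not only a rung but also many spokes, together with the \emph{non}-adjacency constraints that realizations impose (internal path vertices may only attach to prescribed vertices). An induced-minor containment gives you none of this rigidity: its branch sets are arbitrary connected induced subgraphs of unbounded size, so a graph in $\Sc$ can contain $H^{\star}$ as an induced minor through models bearing no relation to a two-path realization, and no finite list of bounded-size forbidden induced subgraphs added to $\Sc$ can police unbounded branch sets without also excluding the very graphs you construct from a valid tiling (your ``rigid enough but not too rigid'' tension). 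Second, even if such a gadget existed, the correctness argument would have to be redone on both sides of the equivalence — the tiling-to-graph direction must still produce a graph surviving the enlarged forbidden list, and the graph-to-tiling direction must extract a tiling from an arbitrary induced-minor model — so the claim that the remainder is ``routine bookkeeping'' is optimistic. In short: no error of reasoning, but the decisive idea is missing, exactly as the authors acknowledge.
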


\subsection{Decidable versions}

We can, however, provide algorithms to decide Problem~\ref{prob:general} in some restricted cases. First, in Section~\ref{sec:rungless}, we prove:

\begin{restatable}{theorem}{rungless}\label{thm:rungless}
If $\Hf$ has no rungs, then Problem~\ref{prob:realization} is decidable.
\end{restatable}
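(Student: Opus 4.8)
The plan is to show that when $\Hf$ has no rungs, the search space of realizations can be bounded: there is a computable function $N = N(\Hf, \Fc)$ such that $\Hf$ has an $\Fc$-free realization if and only if it has one in which every urpath is subdivided into a path of length at most $N$. Given such a bound, the algorithm simply enumerates all realizations obtained by subdividing each urpath of $\Hf$ into a path of length between its minimum allowed length and $N$, and for each such finite graph $G$ checks (by brute force, since $\Fc$ is finite and each $\Ff\in\Fc$ has bounded size) whether $G$ is $\Fc$-free. So the whole content is the bounding lemma.

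To prove the bounding lemma, I would fix a realization $G$ of $\Hf$ and consider a single urpath $u\in U$ realized as a long induced path $P = p_0 p_1 \cdots p_\ell$ inside $G$, with endpoints fixed by $\pi(u)$. The key structural point is that, because $\Hf$ has no rungs, the only vertices of $G$ outside $P$ that may be adjacent to an internal vertex $p_i$ are (i) the vertices of $V$ that carry a spoke to $u$, and (ii) internal vertices of other urpaths — but again, with no rungs present, adjacencies between internal vertices of distinct urpaths are \emph{forbidden} in any realization, so in fact only the spoke-vertices of $u$, a set of fixed size at most $|V|$, can see the interior of $P$. Thus the ``attachment pattern'' of an internal vertex $p_i$ to the rest of $G$ is determined by a bounded amount of data: which spoke-vertices it is adjacent to, plus its two neighbors $p_{i-1}, p_{i+1}$ on $P$ itself. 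Consequently, along $P$ we get a word over a bounded alphabet (the possible attachment-types), and if $\ell$ is large enough, two internal vertices $p_i, p_j$ ($i<j$) have the same type \emph{and} are far enough apart; then I claim we may excise the segment strictly between them and glue $p_i$ to $p_{j+1}$, obtaining a shorter induced path that still realizes $u$ and still, together with the unchanged rest of $G$, forms a realization of $\Hf$.

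The heart of the argument — and the step I expect to be the main obstacle — is verifying that this excision does not create or destroy a copy of some $\Ff\in\Fc$. Destroying is not an issue (we want $\Fc$-freeness to be preserved, i.e.\ we only shorten), so the real worry is \emph{creating} a forbidden configuration: a copy of some $\Ff$ that uses the new edge $p_i p_{j+1}$ or otherwise ``sees across'' the excised region. Here one uses that each $\Ff$ is finite of bounded size, so a pigeonhole choice of the gap $j-i$ larger than $\max_{\Ff\in\Fc}|V(\Ff)|$ guarantees no single copy of $\Ff$ can straddle the excision; combined with the type-equality of $p_i$ and $p_j$ (so that $p_i$ in the new graph has exactly the external neighborhood that $p_j$ had, hence any configuration through $p_i p_{j+1}$ maps to one that already existed through $p_j p_{j+1}$), one shows any copy of $\Ff$ in the shortened graph pulls back to a copy in $G$. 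I would also need to be slightly careful that $P$ remains \emph{induced} after excision and that the minimum-length constraint on $u$ and the endpoint identifications $\pi(u)$ are respected — these are handled by only excising strictly-internal segments and keeping $j - i$ at least, say, $2$. Iterating the excision over all urpaths brings every urpath below the bound $N := (\text{number of attachment-types}) \cdot (\max_{\Ff}|V(\Ff)| + 2)$, which is computable from $\Hf$ and $\Fc$, completing the proof.
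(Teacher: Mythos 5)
Your high-level strategy (prove a computable bound $N$ so that some $\Fc$-free realization exists iff one exists with all paths of length at most $N$, then enumerate) is legitimate and differs from the paper, which instead builds a finite/tree automaton accepting exactly the $\Fc$-free realizations and tests language emptiness (via Courcelle's theorem, or an explicit automaton in the appendix). But your proof of the bounding lemma has a genuine gap: the pigeonhole invariant ``$p_i$ and $p_j$ have the same external neighborhood'' is too weak to justify the excision. First, the excision can destroy the realization property itself: a spoke $(v,u)$ of $\Hf$ might be witnessed only by vertices strictly between $p_i$ and $p_j$, and equality of the types of $p_i$ and $p_j$ does not prevent deleting all of $v$'s neighbors on $P$. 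Second, and more seriously, your ``no creation'' step does not work as stated. Taking the gap $j-i$ larger than $\max_{\Ff}|V(\Ff)|$ does not prevent a copy of $\Ff$ from straddling the splice, because in the shortened graph the two sides of the excision become adjacent: $p_{i-1},p_i,p_{j+1},p_{j+2}$ are four consecutive vertices of the new path no matter how large $j-i$ was in $G$. Your fallback, remapping $p_i\mapsto p_j$, only handles copies whose intersection with the left side is exactly $\{p_i\}$; if the copy also uses $p_{i-1}$ (or a longer stretch to the left, e.g.\ as part of the image of an urpath of $\Ff$ --- recall members of $\Fc$ are pathographs, so a single copy can have unbounded footprint along $P$), the remap breaks the adjacency to $p_{i-1}$, and the spliced pattern of external neighborhoods (left context of $p_i$ followed by right context of $p_j$) may simply never occur in $G$. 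Equivalently: the set of determination strings of $\Fc$-free realizations is a regular language, and your claim amounts to pumping on equality of the \emph{last letter} of two positions in an accepted word, which is false for regular languages in general; one must pump on equality of the automaton state.

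The fix is exactly the missing idea: enrich the ``type'' at position $i$ to a bounded state recording (a) which spokes of the current urpath have already been witnessed by the prefix, and (b) the progress of all partially built copies of every $\Ff\in\Fc$ (in the paper's terminology, the reachable configurations of partial pathograph inclusions $\Ff\to\Hf$ together with the data of which required paths/vertices have been found so far). Two positions with equal enriched state can indeed be spliced, and since the state space is finite and computable, this yields a computable bound $N$ and makes your enumeration argument go through --- but at that point you have essentially rebuilt the paper's automaton, and the paper's route (construct the automaton, decide emptiness) gets the theorem directly without the explicit splicing analysis.
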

We observe that pathographs with neither rungs nor spokes are very closely related to \textit{subdivisible graphs}, introduced by L\'ev\^eque, Lin, Maffray, and Trotignon to study the computational complexity of several induced subgraphs containment problems~\cite{llmt}.

Additionally, we prove:
\begin{restatable}{theorem}{lineartime}\label{thm:lineartime}
For any fixed pathograph $\Hf$ with no rungs and finite set of pathographs $\Fc$, there is a linear time algorithm to decide if a given realization of $\Hf$ is $\Fc$-free or not.
\end{restatable}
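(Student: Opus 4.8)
The plan is to exploit the rigidity of realizations of a \emph{fixed} pathograph. Write $\Hf=(V,U,E,S,R,\pi)$ and suppose $\Hf$ has no rungs. Then any realization $G$ of $\Hf$ is obtained from the bounded-size vertex set $V$ by replacing each urpath $u$, with endpoints $\pi(u)=\{x,y\}\subseteq V$, by an induced path $x=v_0,v_1,\dots,v_k=y$ that is internally disjoint from everything else, and the only neighbours of an internal vertex $v_i$ are $v_{i-1}$, $v_{i+1}$, and some of the (boundedly many) vertices $s\in V$ with $(s,u)\in S$. My first step is to record that such a $G$ has treewidth bounded in terms of $\Hf$ only: take a tree decomposition with a central bag $V$, and for each urpath $u$ hang off it a path of bags $\{x,y\}\cup S_u\cup\{v_i,v_{i+1}\}$, $i=0,\dots,k-1$, where $S_u=\{s\in V:(s,u)\in S\}$. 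Every bag has size at most $|V|+2$, every edge of $G$ sits inside some bag, and each vertex occurs in a connected subtree of bags, so $\operatorname{tw}(G)\le |V|+1$.

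My second step is to express ``$G$ contains $\Ff$'' in monadic second-order logic $\mathrm{MSO}_1$ of the vertex set with the adjacency relation, for each fixed $\Ff\in\Fc$. A witness consists of distinct images in $G$ of the vertices of $\Ff$, together with pairwise internally disjoint vertex sets, one per urpath of $\Ff$, each inducing in $G$ a path with the prescribed endpoints and with at least the required number of internal vertices; and one demands that the subgraph of $G$ induced by the union of all these vertices has precisely the adjacencies dictated by $\Ff$ in the sense of Section~\ref{sec:pathograph} --- the edges of $\Ff$ among branch vertices, the consecutive pairs along each urpath-image, the spoke- and rung-adjacencies permitted by $\Ff$, and no other edge incident to an internal vertex of an urpath-image. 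Since $\Ff$ is fixed, this is a bounded existential quantification over vertex sets followed by a fixed boolean combination of ``$X$ induces a path'' (itself $\mathrm{MSO}_1$) and finitely many atomic adjacency statements, so it is a single $\mathrm{MSO}_1$ sentence $\varphi_\Ff$; and ``$G$ is $\Fc$-free'' is the fixed sentence $\bigwedge_{\Ff\in\Fc}\neg\varphi_\Ff$.

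My third step is to invoke Courcelle's theorem together with Bodlaender's algorithm: for any fixed $k$, one can in linear time compute a tree decomposition of width at most $k$ of a graph whose treewidth is at most $k$, and then decide any fixed $\mathrm{MSO}$ sentence in linear time. Applying this with $k=|V|+1$ and the sentence above gives the algorithm. If the realization is presented together with its subdivided-urpath structure, one may skip Bodlaender's step and build the width-$(|V|+1)$ decomposition of the first step directly, with a far smaller constant; alternatively, one may dispense with logic and run an explicit dynamic program in which a finite automaton sweeps each subdivided urpath, recording in bounded space the ways in which a part of some $\Ff\in\Fc$ can meet that path, and then combine these records over the bounded ``skeleton'' $V$.

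The step I expect to cause the most trouble is the translation in the second paragraph: writing $\varphi_\Ff$ so that it exactly matches the pathograph containment relation, in particular so that the induced-subgraph (``no extra edges'') conditions on internal vertices of urpath-images and the precise semantics of $\Ff$-spokes and $\Ff$-rungs are captured correctly. Once that bookkeeping is settled, the treewidth bound and the appeal to Courcelle's theorem are routine.
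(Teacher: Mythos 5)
Your proposal is correct and takes essentially the same approach as the paper: bound the treewidth of rungless realizations (the paper notes pathwidth at most $K+N$), observe that $\Fc$-freeness (and being a realization) is MSO-expressible, and apply Courcelle's theorem, with the bounded-width decomposition built directly in linear time from the realization structure that the theorem assumes is given in the input. Your optional detour through Bodlaender's algorithm and your sketched explicit path-sweeping automaton are only minor variants of this (the latter is exactly the paper's appendix construction), so nothing further needs to be reconciled.
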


This is in stark contrast with the case of determining if $G$ is $\Fc$-free or not, which is NP-complete even when holding $\Fc$ fixed; this is implied, for example, by the fact that detecting induced prisms in graphs is NP-complete \cite{prismnpc}. Hence this theorem may be interpreted as a fixed-parameter tractability result. These two theorems are proved in Section~\ref{sec:rungless} by constructing a finite automaton that takes as input a realization $G$ of $\Hf$ and accepts if and only if $G$ is $\Fc$-free.

Theorem~\ref{thm:rungless} implies the following:
\begin{corollary}
Problem~\ref{prob:general} is decidable if $S'$ is given by forbidding finitely many induced subgraphs, induced topological minors, and Truemper configurations and $S$ is given by forbidding finitely many pathographs (in particular, any of the ten structures listed in Theorem~\ref{thm:encode}).
\end{corollary}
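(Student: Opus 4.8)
\begin{sketch}
The plan is to reduce to finitely many instances of Problem~\ref{prob:realization}, each with a rungless pathograph, and then apply Theorem~\ref{thm:rungless}.

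First I would put both $S$ and $S'$ into pathograph form via Theorem~\ref{thm:encode}. By assumption $S$ is the class of $\Fc$-free graphs for a finite set of pathographs $\Fc$; this also covers the parenthetical case where $S$ is defined by forbidding finitely many of the ten structures of Theorem~\ref{thm:encode}, since forbidding such a structure in $H$ amounts to forbidding the corresponding finite set of pathographs ($\Sf_i(H)$, or one of $\Theta,\Py,\Pr,\W$), and a finite union of finite sets is finite; no constraint is imposed on $\Fc$. On the other side, $S'$ is defined by forbidding finitely many induced subgraphs, induced topological minors, and Truemper configurations, so by Theorem~\ref{thm:encode} a graph lies in $S'$ if and only if it is $\mathcal{G}$-free, where $\mathcal{G}$ is the finite, effectively computable union of the relevant sets $\Sf_2(\cdot)$, $\Sf_6(\cdot)$, and those of $\Theta,\Py,\Pr,\W$ that are being forbidden.

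Next I would invoke the observation recorded just after Problem~\ref{prob:realization}: for a single pathograph $\Hf$, there is an $\Fc$-free realization of $\Hf$ if and only if the class of $\Fc$-free graphs is not contained in the class of $\Hf$-free graphs. Since $S' = \bigcap_{\Hf\in\mathcal{G}}\{\Hf\text{-free graphs}\}$, applying this to each member of $\mathcal{G}$ yields that $S\subseteq S'$ if and only if no $\Hf\in\mathcal{G}$ has an $\Fc$-free realization. (In the nontrivial direction: if some $\Fc$-free graph $G$ is not $\mathcal{G}$-free, it contains some $\Hf\in\mathcal{G}$, and a realization of $\Hf$ witnessing this is an induced subgraph of $G$, hence $\Fc$-free as well, pathograph containment being monotone under induced subgraphs.) As $\mathcal{G}$ is finite, it then suffices to run, for each $\Hf\in\mathcal{G}$, a decision procedure for the instance of Problem~\ref{prob:realization} asking whether $\Hf$ admits an $\Fc$-free realization, and to answer that $S\subseteq S'$ exactly when every such procedure outputs ``no''.

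The hard part --- really the only point needing checking beyond assembling the two theorems --- is that every pathograph in $\mathcal{G}$ is rungless, so that Theorem~\ref{thm:rungless} applies to each of these finitely many instances. This I would verify by inspecting the constructions behind Theorem~\ref{thm:encode}: an induced subgraph needs no urpaths at all, and in an induced topological minor as well as in each of theta, pyramid, prism, and wheel the subdivided paths are required to be pairwise nonadjacent, which is precisely the absence of rungs; hence none of these encodings ever uses a rung. (This is exactly why the statement restricts $S'$ to these rung-free structures and excludes, for instance, induced minors, whose encodings may genuinely require rungs.) Granting this, each subproblem is decidable by Theorem~\ref{thm:rungless}, and the procedure above decides Problem~\ref{prob:general} in the stated case.
\end{sketch}
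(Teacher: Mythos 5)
Your proposal is correct and follows essentially the same route the paper intends: encode $S'$ via the rungless pathograph sets $\Sf_2$, $\Sf_6$, $\Theta$, $\Py$, $\Pr$, $\W$ from Theorem~\ref{thm:encode}, observe that $S\subseteq S'$ fails exactly when some member of this finite set admits an $\Fc$-free realization (using that a realization inside an $\Fc$-free graph is itself $\Fc$-free), and decide each such instance by Theorem~\ref{thm:rungless}. Your key verification --- that these encodings use urpaths and spokes but never rungs --- is exactly the point on which the corollary rests.
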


In fact, if there are $\Fc$-free realizations of $\Hf$ but they are of a restricted form, this is also automatically discoverable and provable; this is discussed further in Section~\ref{ssec:misc}. This situation occurs quite frequently in the proofs of ``decomposition theorems''. These are theorems of the general form ``If $G$ is in a class $S$ defined by forbidding some structures called \textit{obstructions}, it either lies in one of a few \textit{basic classes}, or else $G$ admits a \textit{decomposition}: it is built from smaller graphs in $S$ in a prescribed way''.

A powerful method to obtain decomposition theorems is to consider a basic graph $H$ (and therefore in the class $S$) that is in some sense ``close'' to containing an obstruction. Then, a general non-basic graph $G$ of the class is analyzed under the assumption that it contains $H$. The key is that since $H$ is ``close'' to containing an obstruction, the vertices in $G \setminus H$ ought to attach to $H$ in a sufficiently restricted way that entails a decomposition of $G$. So it remains to consider graphs in the class that do not contain $H$, which is to say we have a new problem of the same form but with one more obstruction, namely $H$. After several such steps, so many graphs are excluded that all remaining graphs under consideration are basic.

The oldest application of this paradigm seems to be the decomposition theorem of graphs containing no $K_5$ as a minor, due to Wagner~\cite{wagner:k5}. It is notably used by Seymour to decompose regular matroids~\cite{seymour:decRegMat} or by Chudnovsky, Robertson, Seymour and Thomas to prove the strong perfect graph theorem~\cite{chudnovsky.r.s.t:spgt}. All these famous examples are very involved and a reader not familiar with the method may find something much simpler in~\cite [Theorem~2.1]{CPST:bull}.

When applying this method, the proofs can be long and the verifications tedious. It was asked in~\cite[Question 2.12]{nicolas:hdr} if there is a general algorithm to prove such ``attachment lemmas''. Theorem~\ref{thm:rungless} provides such an algorithm in certain cases, while Theorem~\ref{thm:undecidable} shows that perhaps there is no algorithm that works in all cases.

Finally, we prove in Section~\ref{sec:closed}:

\begin{restatable}{theorem}{closeddecidable}\label{thm:closeddecidable}
Problem~\ref{prob:realization} is decidable if $\Fc$ is closed under adding edges, spokes, and rungs.
\end{restatable}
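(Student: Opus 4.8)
\begin{sketch}
The plan is to use the closure hypothesis to cut down to a computable finite family of ``sparse'' candidate realizations, and then to observe that $\Fc$-freeness of a single finite graph is decidable.

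The first ingredient is monotonicity. If $\Fc$ is closed under adding edges, spokes, and rungs, then whenever $G$ is an $\Fc$-free realization of $\Hf$ and $G'$ is a realization of $\Hf$ obtained from $G$ by deleting vertices and edges, $G'$ is again $\Fc$-free. Vertex deletion is harmless since containment passes to substructures. For the deletion of an edge $e=xy$ one argues by contradiction: a witness of some $\Ff\in\Fc$ in $G'$ becomes a witness in $G$ of a pathograph obtained from $\Ff$ by adding one adjacency, according to the roles $x$ and $y$ play in the witness --- two vertices of $\Ff$ (use closure under adding edges), a vertex of $\Ff$ and an internal vertex of an urpath realization (closure under adding spokes), or internal vertices of two distinct urpath realizations (closure under adding rungs) --- and the only remaining case, where $e$ would chord a single urpath realization of the witness, is handled by rerouting that realization along $e$ to a chordless shortcut.

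Using monotonicity we may restrict attention to \emph{taut} realizations: those in which each spoke contributes exactly one edge, each rung exactly one edge, and (after the obvious analogous reduction) each urpath is subdivided no more than needed; deleting the superfluous edges of any $\Fc$-free realization leaves a taut realization that is still a realization of $\Hf$ and still $\Fc$-free. A taut realization is specified by the tuple of subdivision lengths of the urpaths together with a bounded amount of combinatorial data (which internal vertex each spoke hits, which pair of internal vertices each rung joins); in particular a long urpath appears in a taut realization as a long induced path all but boundedly many of whose internal vertices are \emph{plain}, having no neighbour outside the path. This is exactly where the closure under adding rungs does its work: it lets us ignore the ``ladder''-type realizations (many rung edges between two urpath realizations) that encode Wang tilings in the undecidability proof of Theorem~\ref{thm:undecidable}. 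Now the heart of the matter is a pumping lemma: there is a computable $N=N(\Hf,\Fc)$ such that if $\Hf$ has an $\Fc$-free realization then it has a taut one in which every urpath has length at most $N$. Given an $\Fc$-free taut realization with an urpath longer than $N$, take a long plain segment $P^\ast$ of it and contract a sub-segment of $P^\ast$ to get a shorter taut realization $G'$. If $G'$ contained a witness of some $\Ff\in\Fc$, then along the plain part each urpath of the witness must cross cleanly --- an internal vertex of the witness lying on a plain vertex cannot carry a spoke or rung of $\Ff$ and cannot be the image of a vertex of $\Ff$ of degree more than two --- so the witness meets $P^\ast$ in boundedly many internally disjoint chordless subpaths; re-expanding the contracted sub-segment and stretching these subpaths accordingly produces a witness of $\Ff$ in the original realization, a contradiction once $N$ exceeds a bound determined by the sizes of the members of $\Fc$ and the number of urpaths of $\Hf$.

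Finally, the algorithm enumerates, up to isomorphism, the finitely many taut realizations of $\Hf$ all of whose urpaths have length at most $N$, and tests each one for $\Fc$-freeness, answering ``yes'' exactly when some test succeeds; correctness is immediate from the two reductions. Testing whether a fixed finite graph $G$ is $\Fc$-free is decidable by brute force, since for each $\Ff\in\Fc$ there are only finitely many ways to map the vertices of $\Ff$ into $G$ and to realize the urpaths of $\Ff$ as induced paths of $G$, and each such choice can be checked directly against the spoke, rung, disjointness, and length constraints. I expect the main obstacle to be the pumping lemma: making precise how the urpaths of a forbidden pathograph can traverse a long plain segment --- while respecting chordlessness, the pairwise disjointness of the urpath realizations in a witness, and any minimum-length constraints carried by members of $\Fc$ --- and extracting from that analysis an explicit value of $N$; the rerouting step in the monotonicity claim requires the same kind of care.
\end{sketch}
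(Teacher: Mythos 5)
Your route is genuinely different from the paper's in its second half: the paper proceeds by induction on the number of rungs, replacing a chosen rung $\{u_1,u_2\}$ by new vertices $c_1,c_2$ (splitting the two urpaths) joined by an edge so that minimal realizations of $\Hf$ coincide with minimal realizations of a finite family of pathographs with strictly fewer rungs, and the base case is Theorem~\ref{thm:rungless}, whose automaton machinery absorbs arbitrarily long urpaths; no length bound or pumping argument is ever needed. You instead bypass Theorem~\ref{thm:rungless} entirely, via tautness, a pumping bound $N$, and brute-force enumeration. The ingredient you share with the paper is the reduction to minimal (taut) realizations, and that is where your argument has a concrete gap: the monotonicity lemma as you state it is false. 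The failure sits inside your case ``two vertices of $\Ff$'': if the deleted edge joins the images of the two \emph{endpoints of a single urpath} of $\Ff$, closure under adding edges does not apply, because an edge between the endpoints of an urpath cannot be added to a pathograph at all, and no rerouting is available (the shortcut has no internal vertex). The rerouting in your chord case has a related defect: the bypassed internal vertices may carry the only edges realizing spokes or rungs of $\Ff$, and closure licenses added adjacencies, never missing ones.

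Here is an explicit counterexample to the claim that deleting a surplus edge from an $\Fc$-free realization leaves an $\Fc$-free realization. Take $\Fc=\cl_\sim(\{\Theta_1\})$, which is closed, and let $\Hf$ have vertices $v,w,s,t$, edges $vs,vt,vw$, one urpath $u$ from $s$ to $t$, and spokes $(v,u)$ and $(w,u)$. Realize $u$ by the path $s\,q_1\,x\,q_2\,t$ with $v$ adjacent to $q_1,x,q_2$ and $w$ adjacent to $x$ only. In the resulting realization $G$ the only nonadjacent pair of vertices of degree at least $3$ is $\{q_1,q_2\}$, and they are not joined by three internally disjoint induced paths, so $G$ contains no realization of any member of $\Fc$; yet $vx$ is a surplus spoke edge, and $G-vx$ contains the theta $K_{2,3}$ on $\{v,x,q_1,q_2,w\}$. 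So deleting surplus edges can create forbidden structures, and the reduction to taut realizations (on which your pumping lemma and enumeration rest, and which also needs urpaths to be ``minimally subdivided'' --- a contraction, which closure under adding adjacencies does not justify either) must be argued differently, e.g.\ by exhibiting some well-chosen taut $\Fc$-free realization rather than by pruning a given one; the same delicacy is hidden in the paper's minimality observation, whose proof then leans on the rungless theorem rather than on pruning plus pumping. Even granting tautness, your pumping lemma remains the second substantial unproven piece, as you yourself flag: the re-embedding of a witness meeting a long plain segment in several pieces (vertex images, rigid edges, stretchable subpaths, gaps) has to be carried out while preserving exact adjacencies to the segment's two boundary vertices, and an explicit computable $N$ extracted from it.
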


This implies:

\begin{corollary}
Problem~\ref{prob:general} is decidable if $S$ is given by forbidding finitely many finitely many forbidden subgraphs, minors, and/or topological minors (all non-induced), and $S'$ is given by forbidding finitely many pathographs (in particular, any of the ten structures listed in Theorem~\ref{thm:encode}).
\end{corollary}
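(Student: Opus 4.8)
The plan is to read the corollary off Theorem~\ref{thm:closeddecidable} via the encoding Theorem~\ref{thm:encode} and the ``$S\subseteq S'$ vs.\ realization'' dictionary from the introduction. Write $S$ as the class of graphs containing none of $H_1,\dots,H_m$ as a (non-induced) subgraph, minor, or topological minor, and $S'$ as the class of graphs containing none of the pathographs $\Hf_1,\dots,\Hf_k$. By parts~(1), (3), and~(5) of Theorem~\ref{thm:encode}, each $H_j$ comes equipped with a finite set of pathographs $\Sf(H_j)$ (one of $\Sf_1,\Sf_3,\Sf_5$, according to the containment notion attached to $H_j$) such that a graph contains $H_j$ in the relevant sense if and only if it contains some member of $\Sf(H_j)$. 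Hence $\Fc_0:=\bigcup_j\Sf(H_j)$ is a finite set of pathographs and $S=\{G:\ G\text{ is }\Fc_0\text{-free}\}$.

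Next I would reduce to Problem~\ref{prob:realization}. We have $S\not\subseteq S'$ precisely when some $G\in S$ contains some $\Hf_i$. Since a graph containing $\Hf_i$ contains a realization of $\Hf_i$ as an induced subgraph, and $\Fc_0$-freeness is inherited by induced subgraphs (if an induced subgraph $R$ of $G$ contained some $\Ff\in\Fc_0$, then so would $G$), this happens if and only if, for some $i$, there is an $\Fc_0$-free realization of $\Hf_i$; conversely any such realization lies in $S$ and outside $S'$. Therefore deciding whether $S\subseteq S'$ amounts to running, for $i=1,\dots,k$, a decision procedure for Problem~\ref{prob:realization} on the instance $(\Hf_i,\Fc_0)$ and answering ``yes'' exactly when all $k$ answers are ``no''. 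It thus suffices to check that these instances satisfy the hypothesis of Theorem~\ref{thm:closeddecidable}, i.e.\ that $\Fc_0$ may be taken closed under adding edges, spokes, and rungs.

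For this I would invoke the monotonicity of the three \emph{non-induced} containment notions. Adding an edge, spoke, or rung to a pathograph changes neither its vertex set $V$ nor its urpath set $U$, so from a given pathograph only finitely many are reachable by such operations; hence the closure $\cl(\Fc_0)$ of $\Fc_0$ under these operations is again a finite set, and replacing $\Fc_0$ by $\cl(\Fc_0)$ is legitimate provided the two sets forbid the same graphs. The nontrivial direction is that a graph containing some member of $\cl(\Fc_0)$ already contains some member of $\Fc_0$, and it is enough to show: if $\Hf'$ is obtained from $\Hf\in\Sf(H_j)$ by adding one edge, spoke, or rung, then every graph containing $\Hf'$ contains $H_j$ in the relevant sense. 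Here I would unpack the construction behind Theorem~\ref{thm:encode} and use that extra edges and adjacencies in a host graph never destroy a (non-induced) subgraph, a minor model, or a topological-minor model: for subgraphs, $\Sf_1(H_j)$ is essentially the family of all supergraphs of $H_j$ on $V(H_j)$, already closed under adding edges and with no urpaths to carry spokes or rungs; for minors and topological minors, the pathographs in $\Sf_3(H_j)$, $\Sf_5(H_j)$ record a branch-set/subdivision skeleton together with all patterns of extra adjacency among branch vertices and internal path vertices, so adding an edge, spoke, or rung keeps one inside the same family. (If the specific sets produced in the proof of Theorem~\ref{thm:encode} are not literally closed, one replaces each $\Sf(H_j)$ by its closure and verifies the displayed implication directly.)

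The main obstacle is exactly this last verification: it hinges on the precise definitions of pathograph containment, of realizations, and of the families $\Sf_1,\Sf_3,\Sf_5$ from Section~\ref{sec:pathograph} and the proof of Theorem~\ref{thm:encode}, and the bookkeeping is a little delicate for spokes, which in a realization force a \emph{non}-adjacency when absent; one must make sure the encodings used for the non-induced notions are the ``adjacency-unconstrained'' ones, for which adjoining a spoke is harmless. Granting this, everything else is routine, and the parenthetical remark follows as well: by Theorem~\ref{thm:encode} each of the ten listed structures is captured by a finite set of pathographs, so taking $S'$ to be defined by forbidding finitely many of them is a special case of the statement.
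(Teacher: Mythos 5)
Your proposal is correct and follows the paper's intended route: encode $S$ by the finite set $\Fc_0=\bigcup_j\Sf(H_j)$ via Theorem~\ref{thm:encode}, use the introduction's dictionary to reduce $S\subseteq S'$ to deciding, for each forbidden pathograph $\Hf_i$ defining $S'$, whether $\Hf_i$ has an $\Fc_0$-free realization, and apply Theorem~\ref{thm:closeddecidable}. Your hedge about closedness is unnecessary, since the paper's sets are literally $\Sf_1(H)=\cl_\sim(\{H\})$, $\Sf_3(H)=\cl_\sim(\cl_M(H))$, and $\Sf_5(H)=\cl_\sim(\cl_U(\{H\}))$, hence already closed under adding edges, spokes, and rungs.
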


Unfortunately, none of these special cases covers the graph classes considered in Problems~\ref{prob:forcer} and~\ref{prob:Fforcer}, where $S'$ is given by forbidding an induced minor and $S$ is given by forbidding induced subgraphs and Truemper configurations.

\section{Pathographs}
\label{sec:pathograph}

\subsection{Definitions}

In this section we give a precise definition of ``pathograph'' and related terms, and we enumerate some basic results about them.
As mentioned in the introduction, a \textit{pathograph} $\Gf$ if a 6-tuple $(V, U, E, S, R, \pi)$, where:

\begin{itemize}
    \item $V$ is the set of \textit{vertices} of $\Gf$. We assume $V$ is finite in this paper.
    \item $U$ is the set of \textit{urpaths} of $\Gf$. We assume $U$ is finite in this paper.
    \item $E\subseteq \binom{V}{2}$ is the set of \textit{edges} of $\Gf$. If $\{v_1,v_2\}\in E$ we say $v_1$ and $v_2$ are \textit{adjacent}; otherwise we say they are \textit{nonadjacent}. We say $v_1$ and $v_2$ are \textit{incident} to the edge $\{v_1,v_2\}$.
    \item $S\subseteq V\times U$ is the set of \textit{spokes} of $\Gf$. If $(v, u)\in S$, where $v$ is a vertex and $u$ is an urpath, we say $v$ and $u$ are \textit{adjacent}; otherwise \textit{nonadjacent}. We say $v$ and $u$ are \textit{incident} to the spoke $(v, u)$.
    \item $R\subseteq \binom{U}{2}$ is the set of \textit{rungs} of $\Gf$. If $\{u_1,u_2\}\in R$ we say $u_1$ and $u_2$ are \textit{adjacent}; otherwise \textit{nonadjacent}. We say $u_1$ and $u_2$ are \textit{incident} to the rung $(u_1, u_2)$.
    \item $\pi:U\to \binom{V}{2}$ sends each urpath to its \textit{endpoints}. Note that the endpoints of an urpath must be distinct vertices. If $v\in \pi(u)$ we say $v$ is \textit{incident} to $u$. We require the endpoints of each urpath to be nonadjacent, i.e.\ $\im(\pi)\cap E=\varnothing$.
\end{itemize}
Usually we suppress $\pi$ and just write, for example, ``the endpoints of urpath $u$ are $(v_1,v_2)$'' to mean $\pi(u)=\{v_1,v_2\}$. It will often be helpful to view the urpaths as having a ``left'' and ``right'' endpoint; which endpoint is which can be chosen arbitrarily, but in the expression $(v_1,v_2)$ we are identifying $v_1$ as the left endpoint and $v_2$ as the right endpoint. 

A \textit{realization} of $\Gf$ is a graph $G$ formed by replacing each urpath $u_i$ in $\Gf$, say with endpoints $(a_i,b_i)$, with an induced path $P_i=a_ix_i^{(1)}x_i^{(2)}\cdots x_i^{(m_i)}b_i$, with $m_i\ge 1$, such that:
\begin{itemize}
    \item If vertex $v_j$ is adjacent to $u_i$ (an urpath) in $\Gf$, then $v_j$ and $x_i^{(k)}$ are adjacent in $G$ for some $k$. If $v_j$ and $u_i$ are nonadjacent, then $v_j$ is not adjacent to any $x_i^{(k)}$.
    \item If urpaths $u_i$ and $u_j$ are adjacent in $\Gf$, then $x_i^{(k)}$ and $x_j^{(\ell)}$ are adjacent in $G$ for some $k,\ell$. If $u_i$ and $u_j$ are nonadjacent, then $x_i^{(k)}$ and $x_j^{(\ell)}$ are nonadjacent for all $k,\ell$.
\end{itemize}
The vertices and edges of $\Gf$ are preserved in $G$. When drawing pathographs, urpaths are drawn as squiggly lines. Spokes $(v,u)$ are drawn using a line that looks like \begin{tikzpicture}
\draw[-|] (0,0) -- (0.5,0);
\end{tikzpicture} with the ``flat end'' of the line pointing to the urpath $u$ and the other end attached to the vertex $v$. Rungs are drawn using a line that looks like \begin{tikzpicture}
\draw[|-|] (0,0) -- (0.5,0);
\end{tikzpicture}. We give an example pathograph and realization in Figure~\ref{fig:expathograph}.

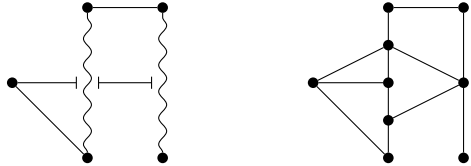
\begin{figure}[htbp!]
    \centering
    \begin{tikzpicture}
    \vert{0}{0,0};
    \vert{1}{1,-1};
    \vert{2}{1,1};
    \vert{3}{2,-1};
    \vert{4}{2,1};
    \edge{0}{1};
    \edge{2}{4};
    \urpath{1}{2};
    \urpath{3}{4};
    \spoke{0}{1}{2};
    \rung{1}{2}{3}{4};

    \vert{0}{4,0};
    \vert{1}{5,-1};
    \vert{2}{5,1};
    \vert{3}{6,-1};
    \vert{4}{6,1};
    \vert{5}{5,-0.5};
    \vert{6}{5,0};
    \vert{7}{5,0.5};
    \vert{8}{6,0};
    \edge{0}{1};
    \edge{2}{4};
    \edge{1}{2};
    \edge{3}{4};
    \edge{0}{6};
    \edge{0}{7};
    \edge{5}{8};
    \edge{7}{8};
    \end{tikzpicture}
    \caption{An example pathograph $\Gf$ (left) and realization $G$ (right). Here, $\Gf$ has 5 vertices, 2 urpaths, 2 edges, 1 spoke, and 1 rung.}
    \label{fig:expathograph}
\end{figure}

Note that all conventional graphs may be viewed as pathographs with no urpaths, spokes, or rungs; we convert freely between the two in this case. We write $V(\Gf),U(\Gf)$, etc.\ to denote the vertices, urpaths, etc.\ of $\Gf$. A \textit{subpathograph} of $\Gf$ is a pathograph obtained by deleting some vertices and/or urpaths from $\Gf$. This should be thought of as generalizing induced subgraphs of graphs. When a vertex is deleted from $\Gf$, we also delete all urpaths, edges, and spokes that vertex was incident to; when an urpath is deleted from $\Gf$, we also delete all spokes and rungs that urpath was incident to (but we need not delete its endpoints).

If the vertices and urpaths of $\Gf$ can be partitioned into two nonempty (disjoint) subpathographs that have no edges, spokes, or rungs between them, then $\Gf$ is \textit{disconnected}; otherwise, $\Gf$ is \textit{connected}. Additionally, the pathograph with no vertices is defined to be disconnected. Note that a pathograph is connected if and only if all of its realizations are connected (following the convention that the empty graph is disconnected). A \textit{path} in $\Gf$ is one of the following:
\begin{enumerate}
    \item a connected subpathograph with no spokes or rungs, such that every vertex is incident to exactly 2 edges or urpaths with two exceptions, and those two exceptional vertices are each incident to exactly 1 edge or urpath; or
    \item a single vertex of $\Gf$.
\end{enumerate}
We write $P(\Gf)$, a set of subpathographs of $\Gf$, to denote the paths of $\Gf$. Note that if $G$ is a conventional graph, i.e.\ a pathograph with no urpaths, $P(G)$ is precisely the set of induced paths in $G$ (allowing for 1-vertex paths). A vertex $v$ on a path $P$ is an \textit{endpoint} if it is incident to at most one urpath and edge in $P$, and $v$ is an \textit{internal vertex} otherwise. We say two paths $P_1$ and $P_2$ are \textit{adjacent} if a vertex or urpath on $P_1$ is adjacent to a vertex or urpath on $P_2$, i.e.\ if there are any edges, spokes, or rungs between $P_1$ and $P_2$.

We say that a graph $G$ \textit{contains} $\Gf$ if some induced subgraph of $G$ is isomorphic to a realization of $\Gf$. More generally, we can define the notation of a \textit{pathograph inclusion} $\phi:\Hf\to \Gf$. This is a pair $\phi = (\phi_V, \phi_U)$ where:

\begin{itemize}
    \item $\phi_V:V(\Hf)\to V(\Gf)$ is injective.
    \item $\phi_U:U(\Hf)\to P(\Gf)$ has the following properties:
    \begin{itemize}
        \item Let $u$ be an urpath in $\Hf$ with endpoints $(a,b)$. Then the endpoints of $\phi_U(u)$ are $\phi_V(a)$ and $\phi_V(b)$ (note that there must be two endpoints of the path $\phi_U(u)$ due to the requirements that $\phi_V$ is injective and the endpoints of $u$ are different).
        \item $\im(\phi_U)$ is a set of internally vertex-disjoint (but not necessarily nonadjacent) paths in $\Gf$, each of which contains either at least one urpath or at least three vertices.
    \end{itemize}
    \item For any $a,b\in V(\Gf)\cup U(\Gf)$, $a$ and $b$ are adjacent in $\Hf$ if and only if $\phi(a)$ and $\phi(b)$ are adjacent in $\Gf$, where by $\phi(x)$ we mean either $\phi_V(x)$ or $\phi_U(x)$ depending on whether $x$ is a vertex or urpath.
\end{itemize}
Say that $\Gf$ and $\Hf$ are \textit{isomorphic} if there are pathograph inclusions $\Gf\to\Hf$ and $\Hf\to\Gf$.

We remark that pathograph inclusions may be composed in an obvious way. If $\phi:\Gf_1\to \Gf_2$ and $\psi:\Gf_2\to \Gf_3$ are pathograph inclusions, then the composition $\xi:\psi\circ\phi$ given by $\xi_V=\psi_V\circ\phi_V$ and $\xi_U(u)=\{\psi(x)\mid x\in \phi(u)\}$ is a pathograph inclusion $\Gf_1\to \Gf_3$.

One can easily check all of the following basic properties:
\begin{proposition}\label{prop:basicfacts}
The following facts hold:
\begin{itemize}
    \item A graph $G$ contains $\Hf$ (i.e.\ contains a realization of $\Hf$ as an induced subgraph) if and only if there is a pathograph inclusion $\Hf\to G$.
    \item A graph $G$ is a realization of $\Hf$ if and only if there is a pathograph inclusion $\phi:\Hf\to G$ such that each vertex of $G$ either lies in the image of $\phi_V$ or lies on some path in the image of $\phi_U$.
    \item Say $\Hf \subseteq \Gf$ if there is a pathograph inclusion $\Hf\to\Gf$. Then $\subseteq$ is a partial order on pathographs up to isomorphism (i.e.\ it is reflexive, antisymmetric, and transitive).
\end{itemize}\qed
\end{proposition}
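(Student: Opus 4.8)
The plan is to prove the three bullets in sequence, with the second (the characterization of realizations) as the technical core, the first deduced from it, and the third essentially formal.

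I would begin with the second bullet. For the direction ``realization $\Rightarrow$ spanning inclusion'', take a realization $G$ of $\Hf$ together with its subdivided paths $P_i=a_ix_i^{(1)}\cdots x_i^{(m_i)}b_i$; define $\phi_V$ to be the inclusion $V(\Hf)\hookrightarrow V(G)$ and $\phi_U(u_i):=P_i$. One then checks directly from the definition of realization that this is a pathograph inclusion: $\phi_V$ is injective and preserves edges and non-edges (because the vertices and edges of $\Hf$ are preserved and each $P_i$ is induced), $\phi_U(u_i)$ is an induced path in $G$ with endpoints $\phi_V(a_i),\phi_V(b_i)$ and at least three vertices since $m_i\ge 1$, the $P_i$ are internally disjoint (their internal vertices being the fresh $x_i^{(k)}$), and the spoke and rung clauses of the realization definition are exactly the vertex--urpath and urpath--urpath cases of the adjacency requirement of an inclusion. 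Spanning is immediate, as $V(G)=\im\phi_V\cup\bigcup_i V(P_i)$ by construction. For the converse, given a spanning inclusion $\phi:\Hf\to G$ with $G$ a conventional graph, one recovers the realization data by declaring the internal vertices of each path $\phi_U(u_i)$ to be the subdivision vertices $x_i^{(k)}$; the inclusion axioms then translate clause by clause into the realization conditions, and the spanning hypothesis guarantees $G$ has no further vertices.

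Next, the first bullet follows formally from the second. If $G$ contains $\Hf$, fix an induced subgraph $G'\subseteq G$ isomorphic to a realization $R$ of $\Hf$. The second bullet gives an inclusion $\Hf\to R$; the isomorphism $R\cong G'$ gives an inclusion $R\to G'$; and the inclusion of $G'$ as an induced subgraph gives $G'\to G$. Composing (using the composability of inclusions recorded just before the proposition) yields $\Hf\to G$. Conversely, from an inclusion $\phi:\Hf\to G$ let $G'$ be the induced subgraph of $G$ on $\im\phi_V$ together with all vertices of all paths $\phi_U(u)$, $u\in U(\Hf)$; since $G'$ is induced and contains every vertex involved, $\phi$ corestricts to an inclusion $\Hf\to G'$, which is now spanning, so by the second bullet $G'$ is a realization of $\Hf$; as $G'$ is an induced subgraph of $G$, the graph $G$ contains $\Hf$.

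Finally, for the third bullet: reflexivity is witnessed by the identity inclusion on $\Hf$ (take $\phi_V=\mathrm{id}$ and send each urpath $u$ to the path consisting of $u$ together with its two endpoints), transitivity is precisely the composability of inclusions already noted, and together these also show $\cong$ is an equivalence relation (symmetry being built into its definition as ``mutual inclusions''); antisymmetry up to isomorphism is then immediate, since ``$\Hf\subseteq\Gf$ and $\Gf\subseteq\Hf$'' is by definition ``$\Hf\cong\Gf$''. The only place real care is needed is the second bullet, specifically in matching the two descriptions of the same object: making precise that, for an inclusion, a vertex is ``adjacent to'' an image path, and two image paths are ``adjacent'', through their internal vertices and urpaths, so that these adjacencies correspond to spokes and rungs rather than being accidentally created at shared endpoints; and checking that for a spanning inclusion the internal vertices of the image paths are disjoint from $\im\phi_V$, so that they can legitimately serve as the fresh subdivision vertices of a realization. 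Once this correspondence is pinned down, every remaining verification is a routine unwinding of definitions.
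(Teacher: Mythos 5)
Your proposal is correct and matches the paper exactly in spirit: the paper states this proposition with no proof at all (``one can easily check''), and your argument is precisely the routine definitional verification the authors intend, with the second bullet as the only substantive check and the first and third deduced formally from composability of inclusions. The two points of care you flag (endpoint adjacencies not being mistaken for spokes/rungs, and internal vertices of image paths being disjoint from $\im\phi_V$ in the spanning case) are indeed the only delicate spots, and they reflect looseness in the paper's definitions rather than any gap in your reasoning.
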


We also briefly mention that pathographs can be further generalized by marking urpaths as ``even'' or ``odd'', and in realizations they must be replaced by even- or odd-length paths, respectively. This allows one to study, for example, the relationship between even hole-free graphs and other graph classes.

\subsection{Encoding other graph containment relations}

Let us prove the following:
\encode*
\begin{proof}
For a set of pathographs $S$, let $\cl_\sim(S)$ be the set of pathographs formed from some $\Hf\in S$ by adding any number of edges, spokes, or rungs (including none). Also, let $\cl_U(S)$ be the set of pathographs formed from some $\Hf\in S$ by replacing any number of edges with urpaths incident to the same vertices (i.e.\ $e=\{v_1,v_2\}$ may be replaced by $u$ with $\pi(u)=\{v_1,v_2\}$). This allows us to deal with (induced) subgraphs and (induced) topological minors:
\begin{enumerate}
    \item[1, 2, 5, 6.] Obviously we may take $\Sf_1(H)=\cl_\sim(\{H\})$, $\Sf_2(H)=\{H\}$, $\Sf_5(H)=\cl_\sim(\cl_U(\{H\}))$, and $\Sf_6(H)=\cl_U(\{H\})$.
\end{enumerate}

Minors and induced minors are slightly more complicated. For a nonnegative integer $k$, let $\conn(k)$ be the set of connected pathographs $\Hf$ with at most $k$ vertices (and at least one vertex, since the empty pathograph is disconnected), such that removing any urpath from $\Hf$ leaves a disconnected pathograph. Note that $\conn(k)$ is a finite set for all $k$. Let $\cl_M(H)$ be the set of pathographs $\Hf$ such that all of the following hold:
\begin{itemize}
    \item $\Hf$ may be partitioned into $\abs{V(H)}$ connected pathographs $C_1,\dots,C_{\abs{V(H)}}$, such that each $C_i$ is in $\conn(\max(1,\deg(v_i)))$, where $v_1,\dots,v_{\abs{V(H)}}$ are the vertices of $H$.
    \item There is at least one edge, spoke, or rung from $C_i$ to $C_j$ if and only if $v_i$ and $v_j$ are adjacent in $H$.
\end{itemize}
Then:
\begin{enumerate}
    \item[4.] We claim that we may take $\Sf_4(H)=\cl_M(H)$. Indeed, suppose $G$ contains some $\Hf\in \cl_M(H)$. Then $G$ contains some disjoint connected induced subgraphs $C_1',\dots,C_{\abs{V(H)}}'$, such that some vertex of $C_i'$ is adjacent to some vertex of $C_j'$ if and only if $v_i$ is adjacent to $v_j$, where $v_1,\dots,v_{\abs{V(H)}}$ are the vertices of $H$. This means $G$ contains $H$ as an induced minor. Conversely, if $G$ contains $H$ as an induced minor, then it contains some disjoint connected induced subgraphs $C_1',\dots,C_{\abs{V(H)}}'$, such that $C_i'$ is adjacent to $C_j'$ if and only if $v_i$ is adjacent to $v_j$. For each $i,j$ with $v_i$ adjacent to $v_j$, choose a vertex $x_{i,j}\in C_i'$ adjacent to some vertex in $C_j'$. If $v_i$ has degree 0, then choose an arbitrary vertex $x$ in $C_i'$. Then it is easy to see that $C_i'$ contains some $\Cf\in \conn(\max(1,\deg(v_i)))$ (the choice of $x_{i,j}$ and/or $x$ determines the image of $\phi_V:V(\Cf)\to C_i'$). From this we can deduce that $G$ contains some $\Hf\in \cl_M(H)$.
    \item[3.] Given the above discussion, it is obvious we may take $\Sf_3(H)=\cl_\sim(\cl_M(H))$.
\end{enumerate}

Finally, consider the pathographs in Figure~\ref{fig:truemper}.
\begin{figure}[htbp!]
    \centering
    \begin{tabular}{c c c c c}
        \begin{tikzpicture}
        \useasboundingbox (0,0) -- (2,2);
        \vert{1}{0,1};
        \vert{2}{2,1};
        \urpath{1}{2};
        \draw[decorate,decoration={snake,amplitude=1.5,segment length=10, post length=0,pre length=0}] (1) to[bend right=90] (2);
        \draw[decorate,decoration={snake,amplitude=1.5,segment length=10, post length=0,pre length=0}] (1) to[bend left=90] (2);
        \end{tikzpicture} &
        \begin{tikzpicture}
        \useasboundingbox (0,0) -- (2,2);
        \vert{1}{0,0};
        \vert{2}{0,2};
        \vert{3}{0.5,1};
        \vert{4}{1.5,1};
        \vert{5}{2,2};
        \vert{6}{2,0};
        \edge{1}{2};
        \edge{1}{3};
        \edge{2}{3};
        \edge{4}{5};
        \edge{4}{6};
        \edge{5}{6};
        \edge{3}{4};
        \edge{2}{5};
        \edge{1}{6};
        \end{tikzpicture} &
        \begin{tikzpicture}
        \useasboundingbox (0,0) -- (2,2);
        \vert{1}{0,0};
        \vert{2}{0,2};
        \vert{3}{0.5,1};
        \vert{4}{1.5,1};
        \vert{5}{2,2};
        \vert{6}{2,0};
        \edge{1}{2};
        \edge{1}{3};
        \edge{2}{3};
        \edge{4}{5};
        \edge{4}{6};
        \edge{5}{6};
        \edge{3}{4};
        \urpath{2}{5};
        \edge{1}{6};
        \end{tikzpicture} &
        \begin{tikzpicture}
        \useasboundingbox (0,0) -- (2,2);
        \vert{1}{0,0};
        \vert{2}{0,2};
        \vert{3}{0.5,1};
        \vert{4}{1.5,1};
        \vert{5}{2,2};
        \vert{6}{2,0};
        \edge{1}{2};
        \edge{1}{3};
        \edge{2}{3};
        \edge{4}{5};
        \edge{4}{6};
        \edge{5}{6};
        \urpath{3}{4};
        \urpath{2}{5};
        \edge{1}{6};
        \end{tikzpicture} &
        \begin{tikzpicture}
        \useasboundingbox (0,0) -- (2,2);
        \vert{1}{0,0};
        \vert{2}{0,2};
        \vert{3}{0.5,1};
        \vert{4}{1.5,1};
        \vert{5}{2,2};
        \vert{6}{2,0};
        \edge{1}{2};
        \edge{1}{3};
        \edge{2}{3};
        \edge{4}{5};
        \edge{4}{6};
        \edge{5}{6};
        \urpath{3}{4};
        \urpath{2}{5};
        \urpath{1}{6};
        \end{tikzpicture} \\
        $\Theta_1$ & $\Pr_1$ & $\Pr_2$ & $\Pr_3$ & $\Pr_4$ \\
        \\
        \begin{tikzpicture}
        \useasboundingbox (0,0) -- (2,2);
        \vert{1}{0,0};
        \vert{2}{0,2};
        \vert{3}{0.5,1};
        \vert{4}{2,1};
        \edge{1}{2};
        \edge{1}{3};
        \edge{2}{3};
        \urpath{1}{4};
        \urpath{2}{4};
        \edge{3}{4};
        \end{tikzpicture} &
        \begin{tikzpicture}
        \useasboundingbox (0,0) -- (2,2);
        \vert{1}{0,0};
        \vert{2}{0,2};
        \vert{3}{0.5,1};
        \vert{4}{2,1};
        \edge{1}{2};
        \edge{1}{3};
        \edge{2}{3};
        \urpath{1}{4};
        \urpath{2}{4};
        \urpath{3}{4};
        \end{tikzpicture} &
        \begin{tikzpicture}
        \useasboundingbox (0,0) -- (2,2);
        \vert{1}{0,1};
        \vert{2}{1,1};
        \vert{3}{2,1};
        \draw[decorate,decoration={snake,amplitude=1.5,segment length=10, post length=0,pre length=0}] (1) to[bend right=90] (3);
        \draw[decorate,decoration={snake,amplitude=1.5,segment length=10, post length=0,pre length=0}] (1) to[bend left=90] (3);
        \edge{1}{2};
        \edge{2}{3};
        \manspoke{2}{1,0.25};
        \end{tikzpicture} &
        \begin{tikzpicture}
        \useasboundingbox (0,0) -- (2,2);
        \vert{1}{0,1};
        \vert{2}{1,1};
        \vert{3}{2,1};
        \draw[decorate,decoration={snake,amplitude=1.5,segment length=10, post length=0,pre length=0}] (1) to[bend right=90] (3);
        \draw[decorate,decoration={snake,amplitude=1.5,segment length=10, post length=0,pre length=0}] (1) to[bend left=90] (3);
        \edge{1}{2};
        \edge{2}{3};
        \manspoke{2}{1,0.25};
        \manspoke{2}{1,1.75};
        \end{tikzpicture} \\
        $\Py_1$ & $\Py_2$ & $\W_1$ & $\W_2$
    \end{tabular}
    \caption{Pathographs corresponding to Truemper configurations.}
    \label{fig:truemper}
\end{figure}
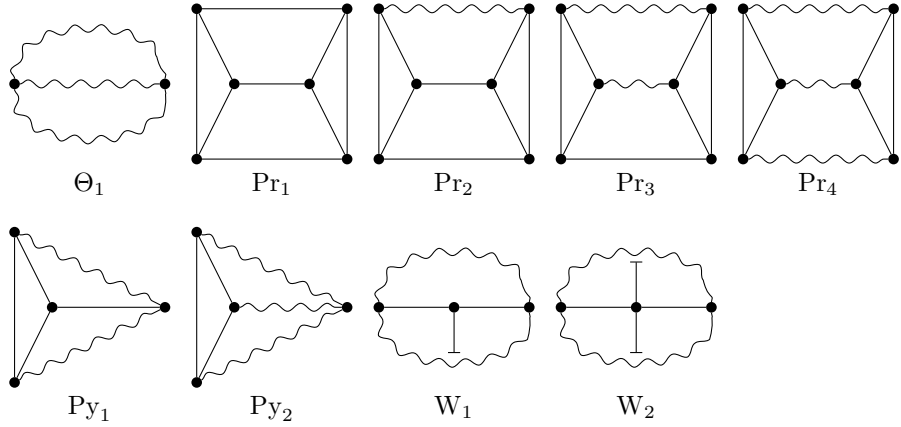
\begin{enumerate}
    \item[7, 8, 9, 10.] Obviously we may take $\Theta=\{\Theta_1\}$, $\Py=\{\Py_1,\Py_2\}$, $\Pr=\{\Pr_1,\Pr_2,\Pr_3,\Pr_4\}$, and $\W=\{\W_1,\W_2\}$.
\end{enumerate}
\end{proof}

\section{The general pathograph realization problem}
\label{sec:undecidable}

The main goal of this section is to prove the following:

\undecidable*

To prove this theorem, we reduce from the problem of determining if a set of Wang tiles admits a periodic tiling. Recall that a \textit{Wang tile} is a 4-tuple $(N, E, S, W)\in C^4$ for some base set $C$ of colors. A set of Wang tiles $S$ is said to \textit{tile the plane} if there is a function $f:\Z^2\to S$ such that for all $i,j$, $f(i,j)_{(2)}=f(i+1,j)_{(4)}$ and $f(i,j)_{(1)}=f(i,j+1)_{(3)}$, where by $X_{(k)}$ we mean the $k$th element of $X$; in other words, we must cover the plane by translates of the tiles in $S$ so that the colors of adjacent tiles match up everywhere. A set of Wang tiles is said to \textit{periodically tile the plane} if there is such a tiling $f$ and positive integers $a,b$ such that $f(i,j)=f(i+a,j)=f(i,j+b)$ for all integers $i,j$; then $f$ is said to be \textit{$(a,b)$-periodic}. We now recall the following classic undecidability results:

\begin{theorem*}\label{thm:wangundecidable}
The following problems are undecidable:
\begin{enumerate}
    \item Given a finite set of Wang tiles $S$, does $S$ tile the plane? \cite{berger}
    \item Given a finite set of Wang tiles $S$, does $S$ periodically tile the plane? \cite{gk}
\end{enumerate}
\end{theorem*}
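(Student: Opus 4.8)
The plan is to prove both statements by reduction from the halting problem for Turing machines, which is undecidable by Turing's theorem: there is no algorithm that, given a Turing machine $M$, decides whether $M$ halts when started on the blank tape. The unifying idea, going back to Wang, is that a valid Wang tiling can be forced to display the space-time diagram of a computation of $M$ — one configuration of $M$ per horizontal row — with the vertical color-matching rules enforcing that the row at height $i+1$ is obtained from the row at height $i$ by exactly one step of $M$.

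Concretely, for a machine $M$ with state set $Q$, tape alphabet $\Gamma$, and transition function $\delta$, I would construct a finite tile set $\mathcal{T}_M$ whose colors encode tape-cell contents together with head-position-and-state markers, the tiles falling into three groups: (i) computation tiles that locally simulate $\delta$ — a tile ``carries'' the symbol of its cell upward when no head is present, and when a head marker is present its top and side edges carry the updated symbol, the new state, and the head-movement signal to the adjacent column; (ii) blank filler tiles for the regions away from the computation; and (iii) deliberately \emph{no} tile able to occupy a cell in a halting state, so that the diagram cannot be continued past the moment $M$ halts. With such a $\mathcal{T}_M$, an infinite upward stack of rows that starts (somewhere) from the all-blank configuration in state $q_0$ and never reaches a halting state exists if and only if $M$ does not halt on the blank tape.

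For part 1 the genuinely delicate point — and what I expect to be the main obstacle — is that a tiling of all of $\Z^2$ has no distinguished origin, so nothing yet forces a computation to be \emph{started}: the all-blank tiling is trivially valid. (In the origin-constrained variant, where one must tile the quadrant $\mathbb{N}^2$ with a prescribed tile at the corner, the reduction is immediate: place a ``start of computation'' tile at the corner and observe that the quadrant is tileable iff $M$ never halts.) The resolution, due to \cite{berger} and later simplified by Robinson, is to adjoin an aperiodic ``skeleton'' sub-tile-set that forces a self-similar hierarchy of nested grids of unboundedly large mesh, and then to embed an independent blank-tape computation of $M$ into the cells of this hierarchy, so that a halting computation creates an unavoidable local obstruction at arbitrarily large scale. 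Reconstructing this hierarchical forcing (or invoking \cite{berger} for it) is the technical heart; granting it, $\mathcal{T}_M$ tiles the plane iff $M$ does not halt on blank input, which proves part 1.

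For part 2 I would argue as follows. The set of tile sets admitting some periodic tiling is recursively enumerable (r.e.), since a periodic tiling is the same as a tiling of some finite torus $\Z/a\times\Z/b$, and one can enumerate $(a,b)$ and search the finitely many torus tilings; the set of tile sets admitting \emph{no} tiling at all is also r.e., since by a compactness (K\"onig's lemma) argument a tile set tiles the plane iff it tiles every $n\times n$ square, so non-tileability is witnessed by a single finite square. These two r.e.\ sets are disjoint, so it suffices (and this is the content of \cite{gk}) to show they are recursively inseparable; then neither can be recursive, and in particular periodic tileability is undecidable. Recursive inseparability I would obtain by reducing from a classical recursively inseparable pair of Turing-machine index sets, e.g.\ $A=\{M: M(\varepsilon)\text{ halts with output }0\}$ and $B=\{M: M(\varepsilon)\text{ halts with output }1\}$: modify the part-1 construction so that halting with output $0$ lets the (now finite, bounded-size) space-time diagram ``wrap around'' into a torus, producing a genuine periodic tiling, while halting with output $1$ produces an obstruction that, through the skeleton, destroys every tiling (non-halting computations being ``don't care''). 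Then $M\in A\Rightarrow\mathcal{T}_M$ lies in the periodic-tiling set and $M\in B\Rightarrow\mathcal{T}_M$ lies in the no-tiling set, so a decider for periodic tiling would recursively separate $A$ from $B$ — impossible. (Alternatively one can reduce the halting problem directly by allowing the simulated tape to be a cycle of arbitrary length $n$, using that a cyclic tape over a finite alphabet has only finitely many configurations and hence an eventually time-periodic computation; reconciling cyclic-tape halting with infinite-blank-tape halting is exactly the bookkeeping carried out in \cite{gk}.) Throughout, the long-but-routine part is the explicit description of the $\delta$-simulating tiles and their interface with the aperiodic skeleton.
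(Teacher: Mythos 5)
The paper does not prove this statement at all: it is quoted as a classical result, with part 1 attributed to \cite{berger} and part 2 to \cite{gk}, and your sketch is essentially an outline of exactly those cited arguments (space-time diagram simulation plus Berger/Robinson's aperiodic hierarchy for part 1, and the recursive-inseparability/torus argument of Gurevich--Koryakov for part 2). Since you, like the paper, ultimately delegate the genuinely hard content (the hierarchical forcing construction and the inseparability bookkeeping) to those same references, your proposal is consistent with the paper's treatment and raises no conflict, though as a standalone proof it remains a high-level sketch rather than a complete argument.
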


We will need a variant of the second problem:

\begin{corollary}\label{cor:specialundecidable}
The following problem is undecidable for any positive integer $n$: given a finite set of Wang tiles $S$, a function $f':[1,n]^2\cap \Z^2 \to S$, is there a periodic tiling $f$ of $S$ such that $f|_{[1,n]^2\cap \Z^2}=f'$?
\end{corollary}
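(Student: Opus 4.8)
The plan is to reduce the ordinary periodic tiling problem (part 2 of the classic undecidability theorem) to this ``seeded'' version. Given a finite set of Wang tiles $S$, I want to build a new finite set $S'$ and a seed $f':[1,n]^2\cap\Z^2\to S'$ so that $S'$ admits a periodic tiling extending $f'$ if and only if $S$ admits a periodic tiling at all. The key difficulty is that a periodic tiling of $S$ need not use any prescribed tile in any prescribed location, so I cannot simply declare one cell of the tiling to be a fixed element of $S$; I must manufacture, from $S$, a new tile set in which some local configuration is \emph{forced} and can therefore serve as the seed.

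The standard trick I would use is to take $n=1$ and add a single ``marker'' color that can only appear in a controlled way. Concretely, introduce one fresh color $\star$ and one fresh tile $t_\star=(\star,\star,\star,\star)$, and form $S' = S \cup \{t_\star\}$. Since $\star$ appears on no tile of $S$, in any tiling of $S'$ the tiles bearing a $\star$ edge are exactly the copies of $t_\star$, and the matching conditions force that the $t_\star$-cells form a union of full rows-and-columns pattern — in fact the set of $t_\star$ cells must be closed under the adjacency matching, so either $t_\star$ is never used, or it is used on a sublattice-like region. To pin this down cleanly it is easier to make $t_\star$ a genuine ``all directions must be $\star$'' tile and check that a tiling of $S'$ containing at least one $t_\star$ forces $t_\star$ \emph{everywhere} adjacent, hence (by connectivity of $\Z^2$) $t_\star$ everywhere; but that tiling is periodic and trivial, which is the degenerate case I need to exclude. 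So instead I would use the cleaner and more robust construction below.

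Here is the construction I would actually carry out. Let $S$ be given. Build $S'$ on color set $C' = (C\times\{0\}) \sqcup (C \times \{1\}) \sqcup \{h,v,\star\}$ as follows: for every tile $(N,E,S_{\mathrm{edge}},W)\in S$ include two ``layered'' copies, a copy with all colors tagged $0$ and a copy with all colors tagged $1$; include ``horizontal transition'' tiles with left color from $C\times\{0\}$, right color $h$ and... — on reflection, the genuinely economical route, and the one I would commit to, is the well-known observation that one can always normalize so that a \emph{distinguished} tile is forced: replace each tile $t=(N,E,S_{\mathrm{edge}},W)$ by four ``quadrant'' tiles recording which of the four congruence classes mod $2$ (in each coordinate) the cell lies in, with fresh colors ensuring that a cell in class $(0,0)$ can only be horizontally adjacent to a cell in class $(1,0)$, etc. Then the tiles of class $(0,0)$ that moreover descend from a \emph{fixed} chosen tile $t_0\in S$ (choose any; if $S=\varnothing$ the answer is trivially no) can be used as a seed on $[1,1]^2$: every periodic tiling of the new set, after deleting the $2\times 2$ class structure, is a periodic tiling of $S$, and conversely any periodic tiling of $S$ can be lifted to one of the new set in which the origin carries the lift of $t_0$ and has class $(0,0)$; translating so that this cell is at $(1,1)$ gives the required seeded tiling, and $f'(1,1)$ is exactly the $(0,0)$-lift of $t_0$.

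For general $n$ the statement follows from the $n=1$ case: given a seed-size-$1$ instance $(S',f'_1)$, pad it to size $n$ by observing that a periodic tiling extends a $1\times1$ seed iff (after a translation) it extends the $n\times n$ seed obtained by tiling the plane greedily from that one cell — but this is not well-defined, so more carefully I would argue directly that the $n=1$ construction already lets us write down \emph{some} valid $n\times n$ seed by using the forced $2\times2$ class structure together with one consistent choice of underlying tiles (any locally consistent $n\times n$ block that can be completed to a full-plane periodic tiling; such a block exists iff the instance is a yes-instance, so for the \emph{reduction} we run it from $S$ as above and read off the induced $n\times n$ block of the lifted tiling, which is computable from $S$ and $t_0$ once we fix the lift). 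The main obstacle, and the step deserving the most care, is precisely this forcing argument: making sure the auxiliary colors genuinely pin down a local seed pattern in \emph{every} tiling of $S'$ without accidentally killing the periodic tilings we want to preserve, and checking the back-and-forth correspondence respects periodicity (the periods may change by a bounded factor coming from the mod-$2$ structure, which is harmless). I expect the proof to be a short page: define $S'$ and $t_0$, state and verify the two directions of ``$S'$ has a periodic tiling extending the seed $\iff$ $S$ has a periodic tiling,'' and invoke part~2 of the classical theorem.
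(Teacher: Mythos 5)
Your proposal has a genuine gap, and it also misses that the statement needs only a much simpler argument. The paper's proof is a two-line Turing reduction: if the seeded problem were decidable, you could decide whether $S$ periodically tiles the plane by testing, for each of the finitely many functions $f':[1,n]^2\cap\Z^2\to S$, whether some periodic tiling extends $f'$; since any periodic tiling restricts to \emph{some} seed on $[1,n]^2$, the answers' disjunction solves the classical problem of Gurevich--Koryakov. Note that the input to the seeded problem is the pair $(S,f')$, so nothing forces you to produce a single seed --- querying all of them is legitimate.

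Your attempted many-one reduction, by contrast, does not work as described. The core difficulty you identify (``a periodic tiling of $S$ need not use any prescribed tile in any prescribed location'') is not resolved by your mod-$2$ quadrant construction: that gadget can force the \emph{positional class} structure of a tiling of $S'$, but it cannot force the \emph{identity} of the underlying tile of $S$ at any cell. Hence in the backward direction your claim ``any periodic tiling of $S$ can be lifted to one of the new set in which the origin carries the lift of $t_0$'' is false: a periodic tiling of $S$ may never use $t_0$ at all (indeed, the paper's own constructions add dummy tiles that no valid tiling can use), so seeding with the $(0,0)$-lift of a fixed $t_0$ turns yes-instances of the periodic tiling problem into no-instances of the seeded problem. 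Fixing this by trying every $t_0\in S$ is exactly the paper's enumeration argument, at which point the gadget is superfluous. Your treatment of general $n$ has the same defect in sharper form: a many-one reduction must \emph{compute} the $n\times n$ seed, but ``any locally consistent block that can be completed to a full-plane periodic tiling'' is not effectively obtainable --- deciding whether such a block exists is essentially the problem you are reducing from --- whereas enumerating all $|S|^{n^2}$ candidate seeds and asking the oracle about each is both effective and sufficient.
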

\begin{proof}
If this was decidable, one could construct an algorithm for the second problem in the theorem above by simply testing for each of the (finitely many) functions $f':[1,n]^2\cap \Z^2 \to S$ whether there is a periodic tiling $f$ with $f|_{[1,n]^2\cap \Z^2}=f'$.
\end{proof}

We first prove two variations of Theorem~\ref{thm:undecidable}. The first variation allows for the vertices, edges, and urpaths of the relevant objects to be colored, and the edges, urpaths, spokes, and rungs to be directed. This makes encoding the Wang tile rules using forbidden pathographs simpler. The second variation restricts the coloring to just vertices and removes the directedness of edges, urpaths, spokes, and rungs; this is accomplished by replacing vertices by short paths of cleverly colored vertices and replacing edges by special bipartite graphs depending on what color they used to be, and adding some forbidden pathographs to enforce this additional structure. The final proof will remove the colors from vertices and leave us with conventional pathographs; this is accomplished by the addition of a special gadget and attaching all of the vertices into this gadget according to the color they used to be, and adding yet more forbidden pathographs to enforce this additional structure.

\subsection{Directed multicolored pathographs}

For the first variation, generalize pathographs to \textit{directed multicolored pathographs}: now, the vertices, edges, and urpaths are colored from a base set $D$, and edges, urpaths, spokes, and rungs are directed (but spokes and rungs are not colored). A \textit{realization} $G$ of such an object $\Gf$ is a directed multicolored graph (i.e.\ a directed graph with colored vertices and edges) where:
\begin{itemize}
    \item each directed urpath $u_i$ of $\Gf$ is replaced by a directed path $P_i$ of $G$ or length at least 3 such that all of the vertices and edges in $P_i$ are the color of the urpath,
    \item for each (directed) spoke $(v_i,u_j)$, there is at least one edge (directed in the same direction as the spoke, and of any color) between the internal vertices of $P_j$ and vertex $v_i$, and
    \item for each (directed) rung $(u_i,u_j)$, there is at least one edge from an internal vertex of $P_i$ to an internal vertex of $P_j$.
\end{itemize}
An \textit{induced subgraph} of a directed multicolored graph $G$ consists of some vertices $V'\subseteq V(G)$ and all the edges of $G$ between them; in other words, it is formed by vertex deletion in $G$. Two directed multicolored graphs are \textit{isomorphic} if there is a bijection between their vertices that preserves the colors of all vertices and direction and color of edges.

For our purposes, we forbid directed cycles of length 2. In other words, between any two vertices, there is either no edge, an edge from the first vertex to the second, or an edge from the second vertex to the first.

\begin{lemma}\label{lem:dicolor}
The following problem is undecidable: let $\Hf$ be a directed multicolored pathograph and $\Fc$ a finite set of directed multicolored pathographs; is there an $\Fc$-free realization of $\Hf$? In fact, it is undecidable even if $\Hf$ has just one rung and all the elements of $\Fc$ have no urpaths; i.e.\ they are conventional directed multicolored graphs.
\end{lemma}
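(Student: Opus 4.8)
The plan is to reduce from the periodic Wang tiling problem in the form given by Corollary~\ref{cor:specialundecidable}: given a finite set $S$ of Wang tiles and a prescribed partial tiling $f'$ on an $n\times n$ block, decide whether $S$ admits a periodic tiling extending $f'$. Given such an instance, I will build a directed multicolored pathograph $\Hf$ with exactly one rung, whose realizations encode candidate tilings, together with a finite set $\Fc$ of directed multicolored graphs (no urpaths) encoding the tile-adjacency rules, so that $\Hf$ has an $\Fc$-free realization if and only if $S$ periodically tiles the plane compatibly with $f'$.

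\medskip

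\noindent\textbf{The construction of $\Hf$.} The pathograph $\Hf$ will consist of two urpaths $P$ and $Q$ joined by a single rung, plus a bounded ``seed'' gadget of ordinary vertices and edges implementing $f'$. The intended realization replaces $P$ and $Q$ by two long induced paths; I think of the vertices of $P$ (in order) as the columns of one period of the tiling and the vertices of $Q$ as indexing a second copy, so that the pattern of edges \emph{between} $P$ and $Q$ in a realization records, cell by cell, which tile of $S$ sits there — the color of each vertex or the presence/absence of a $P$--$Q$ edge at position $k$ encodes $f(k)$. Concretely, I will use a distinct color for each Wang tile (or a small fixed-size block of colored vertices per cell if one prefers bounded palette later), so that a realization of $\Hf$ is tantamount to a bi-infinite (in the ``unrolled'' sense, made finite and periodic by the single path closing up) sequence of tile choices along one row, with the second path carrying the row above it; directedness of the rung and of the urpaths lets me orient ``left/right'' and ``below/above'' unambiguously. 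The seed gadget is attached by ordinary edges to the first $n$ vertices of $P$ and $Q$ and forces the restriction of the tiling to the $n\times n$ block to equal $f'$.

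\medskip

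\noindent\textbf{The construction of $\Fc$.} Each forbidden graph in $\Fc$ is a small, urpath-free directed multicolored graph encoding a \emph{violated} tiling rule. For the horizontal matching rule I forbid, for every pair of tiles $t,t'$ with $t_{(2)}\neq t'_{(4)}$, the configuration ``two consecutive vertices of a path colored $t$ then $t'$''; for the vertical rule I forbid, for every $t,t'$ with $t_{(1)}\neq t'_{(3)}$, the configuration ``a vertex colored $t$ joined by the rung-edge to a vertex colored $t'$.'' A realization of $\Hf$ avoids all of these precisely when the encoded tile assignment satisfies every adjacency constraint, i.e.\ is a genuine periodic tiling. One checks the two directions of the equivalence: a valid periodic tiling yields a finite realization (take paths of length equal to the periods, wrap around, color by tiles), which is $\Fc$-free by construction; conversely any $\Fc$-free realization, being a finite graph, has paths $P,Q$ of some finite lengths, and reading off the colors yields a tile assignment on a torus, hence a periodic tiling of $\Z^2$, extending $f'$ because of the seed gadget.

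\medskip

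\noindent\textbf{Main obstacle.} The delicate point is making the rung do the work of \emph{all} vertical adjacencies at once while there is only one rung: a rung in a realization only guarantees \emph{at least one} edge between internal vertices of $P$ and $Q$, not a perfect ``rung-by-rung'' matching aligning column $k$ of $P$ with column $k$ of $Q$. So the real technical heart is designing the colorings and a handful of extra forbidden graphs that force the edges between $P$ and $Q$ to form exactly the ``staircase'' pattern that lines up the two rows correctly (e.g.\ forbidding a $P$-vertex with two $Q$-neighbors of incompatible index-colors, forbidding crossings, forbidding gaps), and symmetrically that $Q$ be the ``shift by one period'' of $P$ so that the torus identification is consistent. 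Getting this rigidity with only urpath-free forbidden graphs — and verifying that no unintended realization sneaks through — is where the proof will require care; the rest is bookkeeping. Once Lemma~\ref{lem:dicolor} is in place, Theorem~\ref{thm:undecidable} follows by the decolorizing and de-directing reductions sketched after the lemma statement.
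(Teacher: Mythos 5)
Your reduction source (Corollary~\ref{cor:specialundecidable}) matches the paper's, but the encoding does not, and the part you defer as the ``main obstacle'' is precisely the part that cannot be made to work in the form you describe. You encode the tiling one-dimensionally: the vertices of $P$ carry one row of tiles and the vertices of $Q$ carry the row above it, with the single rung somehow responsible for aligning the two rows column by column. Two difficulties are fatal here. First, a realization then certifies only a pair of horizontally periodic, vertically compatible rows, and the existence of such a pair is decidable (it amounts to finding a directed cycle in the finite graph whose vertices are vertically compatible tile pairs and whose edges are horizontal compatibilities), so no choice of urpath-free forbidden graphs on top of this encoding can yield undecidability; your ``shift by one period'' repair would only detect tilings in which each row is a horizontal shift of the one below, and a general periodic tiling need not have this form, so the forward direction of the equivalence breaks. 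Second, your seed gadget is attached ``by ordinary edges to the first $n$ vertices of $P$ and $Q$'', but internal vertices of an urpath are not individually addressable in the formalism: a spoke only guarantees at least one edge to \emph{some} internal vertex, with no control over which or how many, so you cannot pin $f'$ to prescribed positions along the paths this way.

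The paper's proof sidesteps all of this with a genuinely two-dimensional encoding: $\Hf$ is two directed monochromatic cycles (red and blue), each consisting of three vertices and one urpath, with \emph{every} possible edge, spoke and rung from the red side to the blue side (Figure~\ref{fig:dimultiH}); in a realization the red cycle has length $a$ and the blue cycle length $b$, and the tile at position $(i,j)$ of the $a\times b$ unit cell is read off as the \emph{color} of the edge from $x_i$ to $y_j$. The weakness of spokes and rungs (``at least one edge'') is neutralized not by alignment gadgets but by a two-vertex forbidden graph consisting of a nonadjacent red--blue pair (together with forbidden red/blue-colored and wrongly directed cross edges), which forces the cross pattern to be a complete bipartite graph of tile-colored edges; the horizontal and vertical matching rules then become three-vertex forbidden graphs (two consecutive red vertices plus one blue vertex, and dually), and $f'$ is encoded on the $3\times 3$ block of \emph{ordinary} cycle vertices by prescribing the colors of the nine edges $x_iy_j$ in $\Hf$ itself. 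To repair your argument you would need to replace your two-row encoding and seed mechanism with something of this kind; as written, the proposal is missing the central idea rather than just its verification.
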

Here, as before, ``$G$ is $\Fc$-free'' means that no induced subgraph of $G$ is isomorphic to a realization of any $\Ff\in \Fc$.
\begin{proof}
We reduce from the $n=3$ case of the problem considered in Corollary~\ref{cor:specialundecidable}. Let $S$ be a finite set of Wang tiles and $f':[1,3]^2\cap \Z^2 \to S$ a function. Our set $D$ of colors appearing in $\Hf$ and the pathographs of $\Fc$ will be $S$, the set of Wang tiles, plus two special colors, which we will call ``blue'' and ``red.'' The pathograph $\Hf$ consists of two directed cycles, one with red vertices and one with blue vertices, each with three vertices and one urpath, and all possible edges, spokes, and rungs from the red cycle to the blue cycle. Call the red vertices $x_1,x_2,x_3$ and the blue vertices $y_1,y_2,y_3$ such that the urpaths are from vertices $x_3$ to $x_1$ and $y_3$ to $y_1$. Finally, the color of the directed edge $x_iy_j$ is defined to be $f'(x_i,y_j)$ for each $i,j\in\{1,2,3\}$. The pathograph $\Hf$ is shown in Figure~\ref{fig:dimultiH}.

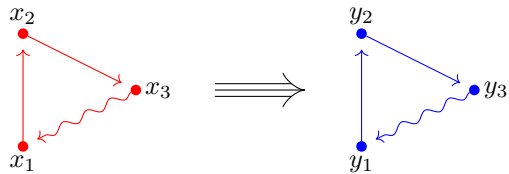
\begin{figure}[htbp!]
    \centering
    \begin{tikzpicture}[scale=1.5]
    \aclabvert{1}{0,0}{x_1}{below}{red};
    \aclabvert{2}{0,1}{x_2}{above}{red};
    \aclabvert{3}{1,0.5}{x_3}{right}{red};
    \aclabvert{4}{3,0}{y_1}{below}{blue};
    \aclabvert{5}{3,1}{y_2}{above}{blue};
    \aclabvert{6}{4,0.5}{y_3}{right}{blue};

    \dicoledge{1}{2}{red};
    \dicoledge{2}{3}{red};
    \dicolurpath{3}{1}{red};
    \dicoledge{4}{5}{blue};
    \dicoledge{5}{6}{blue};
    \dicolurpath{6}{4}{blue};

    \draw[double distance=2pt, arrows={-Implies}, scaling nfold=3] (1.7,0.5) -- (2.5, 0.5);
    \end{tikzpicture}
    \caption{The directed multicolored pathograph $\Hf$ in the case $n=3$. The triple arrow indicates that every possible (directed) edge, spoke, and rung is drawn from the left to the right. There will be a total of 9 edges, 3 spokes, and 1 rung between the sides. The color of the directed edge $x_iy_j$ is $f'(i,j)\in S$.}
    \label{fig:dimultiH}
\end{figure}

Now, $\Fc$ will contain the following, which are all pathographs with no urpaths:
\begin{enumerate}[start=2]
    \item[1a.] A pathograph with one red vertex $r$ and one blue vertex $b$, and a red edge from $r$ to $b$.
    \item[1b.] A pathograph with one red vertex $r$ and one blue vertex $b$, and a blue edge from $r$ to $b$.
    \item[1c.] A pathograph with one red vertex $r$ and one blue vertex $b$, and no edges.
    \item[1d.] For each color, a pathograph with one red vertex and one blue vertex and an edge of that color from the blue vertex to the red vertex.
    \item For each pair of tiles $s,t\in S$ such that $s_{(2)}\ne t_{(4)}$ (so tile $s$ cannot appear immediately to the left of $t$ in a valid tiling), a pathograph with two red vertices $r_1,r_2$ and one blue vertex $b$, with a red edge from $r_1$ to $r_2$, an edge of color $s$ from $r_1$ to $b$, and an edge of color $t$ from $r_2$ to $b$.
    \item For each pair of tiles $s,t\in S$ such that $s_{(1)}\ne t_{(3)}$ (so tile $s$ cannot appear immediately below $t$ in a valid tiling), a pathograph with two blue vertices $b_1,b_2$ and one red vertex $r$, with a blue edge from $b_1$ to $b_2$, an edge of color $s$ from $r$ to $b_1$, and an edge of color $t$ from $r$ to $b_2$.
\end{enumerate}
See Figure~\ref{fig:dicolorF} for a small example. We will refer to the pathographs in $\Fc$ by their numbers above; for instance, the pathograph defined in first entry will be called ``type 1a''. The first three graphs will collectively be referred to as ``type 1''.

\begin{figure}[htbp!]
    \centering
    \begin{tikzpicture}
    \draw[green, ultra thick] (0,0) -- (1,0);
    \draw[green, ultra thick] (1,0) -- (1,1);
    \draw[magenta, ultra thick] (1,1) -- (0,1);
    \draw[magenta, ultra thick] (0,1) -- (0,0);
    \node at (0.5, 0.5) {$s$};
    
    \draw[green, ultra thick] (2,0) -- (3,0);
    \draw[magenta, ultra thick] (3,0) -- (3,1);
    \draw[green, ultra thick] (3,1) -- (2,1);
    \draw[green, ultra thick] (2,1) -- (2,0);
    \node at (2.5, 0.5) {$t$};

    \tikzset{shift={(-2,-1.5)}};
    \node at (-0.5,0) {1a.};
    \cvert{0}{0,0}{red};
    \cvert{1}{1,0}{blue};
    \dicoledge{0}{1}{red};

    \tikzset{shift={(3,0)}};
    \node at (-0.5,0) {1b.};
    \cvert{0}{0,0}{red};
    \cvert{1}{1,0}{blue};
    \dicoledge{0}{1}{blue};

    \tikzset{shift={(3,0)}};
    \node at (-0.5,0) {1c.};
    \cvert{0}{0,0}{red};
    \cvert{1}{1,0}{blue};

    \tikzset{shift={(-7.5,-1.5)}};
    \node at (-0.5,0) {1d.};
    \cvert{0}{0,0}{red};
    \cvert{1}{1,0}{blue};
    \dicoledge{1}{0}{red};
    \node at (1.5,0) {$\vphantom{4},$};

    \tikzset{shift={(3,0)}};
    \cvert{0}{0,0}{red};
    \cvert{1}{1,0}{blue};
    \dicoledge{1}{0}{blue};
    \node at (1.5,0) {$\vphantom{4},$};

    \tikzset{shift={(3,0)}};
    \cvert{0}{0,0}{red};
    \cvert{1}{1,0}{blue};
    \dilabedge{1}{0}{s}{above};
    \node at (1.5,0) {$\vphantom{4},$};
    \tikzset{shift={(3,0)}};
    \cvert{0}{0,0}{red};
    \cvert{1}{1,0}{blue};
    \dilabedge{1}{0}{t}{above};
    \node at (1.5,0) {$\vphantom{4},$};

    \tikzset{shift={(-7.5,-1.5)}};
    \node at (-0.5,0) {2.};
    \cvert{0}{0,0.5}{red};
    \cvert{1}{0,-0.5}{red};
    \cvert{2}{1,0}{blue};
    \dicoledge{0}{1}{red};
    \dilabedge{0}{2}{s}{above};
    \dilabedge{1}{2}{s}{below};
    \node at (1.5,0) {$\vphantom{4},$};

    \tikzset{shift={(3,0)}};
    \cvert{0}{0,0.5}{red};
    \cvert{1}{0,-0.5}{red};
    \cvert{2}{1,0}{blue};
    \dicoledge{0}{1}{red};
    \dilabedge{0}{2}{t}{above};
    \dilabedge{1}{2}{s}{below};
    \node at (1.5,0) {$\vphantom{4},$};

    \tikzset{shift={(3,0)}};
    \cvert{0}{0,0.5}{red};
    \cvert{1}{0,-0.5}{red};
    \cvert{2}{1,0}{blue};
    \dicoledge{0}{1}{red};
    \dilabedge{0}{2}{t}{above};
    \dilabedge{1}{2}{t}{below};

    \tikzset{shift={(-5,-2)}};
    \node at (-0.5,0) {3.};
    \cvert{0}{0,0}{red};
    \cvert{1}{1,0.5}{blue};
    \cvert{2}{1,-0.5}{blue};
    \dicoledge{1}{2}{blue};
    \dilabedge{0}{1}{s}{above};
    \dilabedge{0}{2}{s}{below};
    \node at (1.5,0) {$\vphantom{4},$};

    \tikzset{shift={(3,0)}};
    \cvert{0}{0,0}{red};
    \cvert{1}{1,0.5}{blue};
    \cvert{2}{1,-0.5}{blue};
    \dicoledge{1}{2}{blue};
    \dilabedge{0}{1}{s}{above};
    \dilabedge{0}{2}{t}{below};
    \end{tikzpicture}
    \caption{An example $\Fc$. Here, $S$ consists of the two Wang tiles $s,t$ shown at the top of the figure, using colors $C=\{\text{green}, \text{magenta}\}$. Labels of black edges indicate their color.}
    \label{fig:dicolorF}
\end{figure}
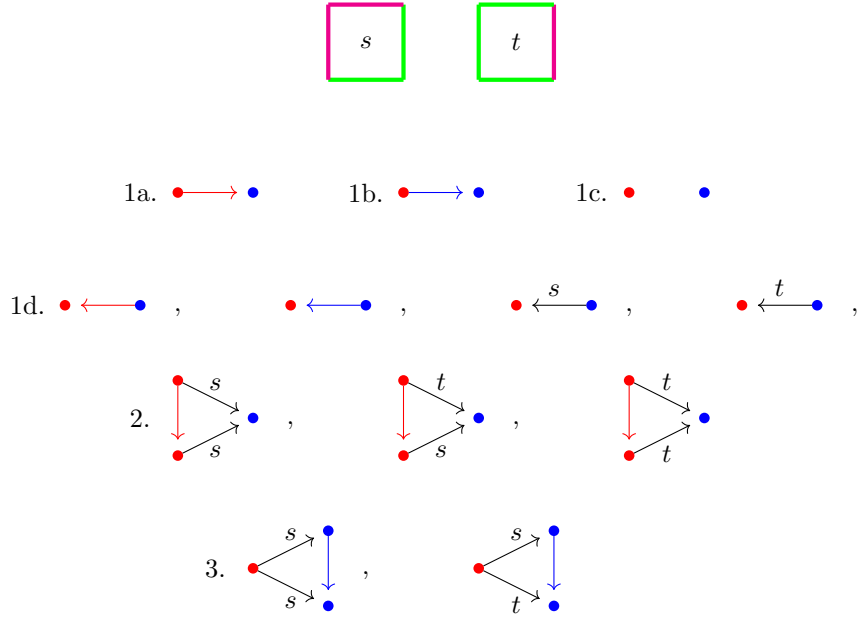

We claim that $S$ admits a periodic tiling $f$ with $f|_{[1,3]^2\cap \Z^2}=f'$ if and only if there is an $\Fc$-free realization of $\Hf$.

($\Rightarrow$) Suppose there is such a periodic tiling $f$, say $f$ is $(a,b)$-periodic, with $a,b\ge 4$ (this restriction on $a$ and $b$ causes no problems, since $(a,b)$-periodic tilings are also $(4a,4b)$-periodic). Then let $G$ be a directed multicolored graph with $a$ red vertices $x_1,\dots,x_a$, $b$ blue vertices $y_1,\dots,y_b$, and the following edges:
\begin{itemize}
    \item A red edge from $x_i$ to $x_{i+1}$ for each $i$, taking indices modulo $a$.
    \item A blue edge from $y_i$ to $y_{i+1}$ for each $i$, taking indices modulo $b$.
    \item An edge of color $f(i,j)$ from $x_i$ to $y_j$ for each $i,j$.
\end{itemize}
It is easy to verify this is indeed an $\Fc$-free realization of $\Hf$. It contains no graphs of type 1 by construction, and it has no graphs of type 2 or 3 since $f$ is a valid tiling.

($\Leftarrow$) Suppose there is such a realization $G$. Call the vertices on the red urpath $x_3,x_4,\dots,x_a,x_1$ and the vertices on the blue urpath $y_3,y_4,\dots,y_b,y_1$. First note that, by the pathographs of type 1 appearing in $\Fc$, there must be an edge with a color in $S$ (i.e.\ not red or blue) from $x_i$ to $y_j$ for all $i,j$. The type 1c pathograph ensures there is \textit{an} edge, and the types 1a and 1b pathographs make sure this edge is not red or blue. Then a periodic tiling $f$ with the desired properties is given by $f(i,j)\coloneq\text{the color of the edge from $x_i$ to $y_j$}$, taking the index of $x_i$ modulo $a$ and the index of $y_j$ modulo $b$. Note for each $i,j$ that:
\begin{itemize}
    \item by the type 2 pathographs appearing in $\Fc$, the east-facing color of tile $f(i,j)$ matches the west-facing color of tile $f(i+1,j)$, and
    \item by the type 3 pathographs appearing in $\Fc$, the north-facing color of tile $f(i,j)$ matches the south-facing color of tile $f(i,j+1)$,
\end{itemize}
so $f$ is a valid tiling, and obviously $f|_{[1,3]^2\cap \Z^2}=f'$.

Finally, each pathograph in $\Fc$ has no urpaths, and $\Hf$ has just one rung, completing the proof.
\end{proof}

\subsection{Vertex-colored pathographs}

In the second variation, we replace ``directed multicolored pathograph'' with ``vertex-colored pathograph,'' i.e.\ the edges, urpaths, spokes, and rungs will no longer be directed or colored; only the vertices are colored. The vertices on path $P_i$ in a realization $G$, replacing urpath $u_i$ of $\Hf$, are allowed to have any colors.

\begin{lemma}\label{lem:vcolor}
The following problem is undecidable: let $\Hf$ be a vertex-colored pathograph and $\Fc$ a finite set of vertex-colored pathographs; is there an $\Fc$-free realization of $\Hf$? In fact, it is undecidable even if $\Hf$ has just one rung and each pathograph in $\Fc$ has no urpaths.
\end{lemma}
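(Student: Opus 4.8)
The plan is to reduce the directed multicolored realization problem of Lemma~\ref{lem:dicolor} to the vertex-colored one. Given a directed multicolored pathograph $\Hf$ with a single rung, whose colours form a finite set $D$, and a finite set $\Fc$ of directed multicolored \emph{graphs}, I will construct a vertex-colored pathograph $\Hf'$ (still with a single rung) and a finite set $\Fc'$ of vertex-colored \emph{graphs} such that $\Hf$ has an $\Fc$-free realization if and only if $\Hf'$ has an $\Fc'$-free realization. Since the members of $\Fc$ have no urpaths, the members of $\Fc'$ will have none either, which gives the stated strengthening.

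The colours of $\Hf'$ and $\Fc'$ are ``simulated'' versions of the data of $\Hf$: one fresh colour for each original vertex colour, and, for each original urpath colour $\gamma$, fresh colours marking the left end, the right end, and the interior of a $\gamma$-urpath. The pathograph $\Hf'$ is obtained from $\Hf$ by (i) replacing each vertex $v$ (of colour $\gamma$) by a short path whose vertex-colours encode $\gamma$ together with a fixed choice of one end as ``left''; (ii) replacing each directed edge from $v$ to $w$ of colour $c$ by a bipartite adjacency pattern between the path of $v$ and the path of $w$ that depends only on $c$ and is chosen (lengthening the vertex-paths if necessary, since $D$ is finite) so that it is distinguishable from the pattern used for the reverse orientation, hence both $c$ and the direction of the edge can be read back; (iii) replacing each directed urpath $u$ from $a$ to $b$ (of colour $\gamma$) by two new ``guard'' vertices, attached to the paths of $a$ and $b$ and coloured so as to record $\gamma$ and the orientation of $u$, together with a new urpath $u'$ joining them; and (iv) turning each spoke $(v,u)$ into a spoke $(v,u')$ and the unique rung into a rung between the two copies $u'$. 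Clearly $\Hf'$ has exactly one rung.

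The set $\Fc'$ has two kinds of members. First, a finite family of \emph{rigidity graphs}: forbidden vertex-colored graphs on two or three vertices whose absence forces any $\Fc'$-free realization $G'$ of $\Hf'$ to be an honest encoding of a directed multicolored realization of $\Hf$ --- in particular, they force the \emph{a priori arbitrary} colours on the interior of the path realising each $u'$ to be precisely the encoding of a monochromatic directed $\gamma$-path, and they forbid any gadget or guard vertex from picking up an unintended neighbour and any edge/spoke/rung from being realized between the ``wrong'' decoded objects. Given rigidity, one has a decoding map taking such a $G'$ to a directed multicolored realization $\mathrm{dec}(G')$ of $\Hf$: its vertices are the vertex-gadget paths collapsed back to single vertices together with the decoded urpath interiors; a directed $c$-coloured edge is read off each $c$-edge-gadget; urpath-interior, spoke, and rung edges are oriented using guard colours; and every remaining edge of $G'$ between decoded vertices is given a convenient colour and orientation. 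The second kind of member of $\Fc'$ is, for each $\Ff\in\Fc$, the encoding $\mathrm{enc}(\Ff)$ of $\Ff$ under the same recipe (a genuine graph, as $\Ff$ has no urpaths); one checks that, under rigidity, $G'$ contains $\mathrm{enc}(\Ff)$ as an induced subgraph if and only if $\mathrm{dec}(G')$ contains $\Ff$. Given all this, the equivalence is immediate: encoding an $\Fc$-free realization of $\Hf$ produces an $\Fc'$-free realization of $\Hf'$, and conversely decoding an $\Fc'$-free realization of $\Hf'$ produces, by rigidity, a genuine realization of $\Hf$ that is $\Fc$-free.

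The crux is designing the rigidity graphs: they must be absent from the honest encoding of every realization of $\Hf$, yet present whenever a realization $G'$ of $\Hf'$ deviates from the intended shape --- whenever an interior urpath vertex is miscoloured, two consecutive interior vertices break the encoded ``vertex, edge-gadget, vertex, $\dots$'' pattern, a gadget or guard vertex has a stray neighbour, or an edge/spoke/rung links the wrong decoded objects. What makes this delicate is that the realization semantics are permissive (interior urpath vertices may carry any colours, and edges beyond those explicitly forced may appear), so it is the rigidity graphs, and not the definition of realization, that must do essentially all the work; verifying that a fixed finite list of two- and three-vertex forbidden graphs is enough --- in particular that no ingenious miscolouring can evade them while keeping the graph $\Fc'$-free --- is the main technical content. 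The rest is routine.
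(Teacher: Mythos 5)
There is a genuine gap, and it sits exactly where you flag the ``main technical content'': the claim that rigidity graphs on two or three vertices can force every $\Fc'$-free realization of $\Hf'$ to be an honest encoding. The hard part is not the fixed portion of $\Hf'$ (which is copied verbatim into every realization and needs no policing) but the free portion created by the rung: the realization may put an \emph{arbitrary} bipartite edge pattern between the interiors of the two paths realizing the rung-linked urpaths, subject only to ``at least one edge''. Your decoding must turn each such block of edges between a vertex-run on one side and a vertex-run on the other into a single directed coloured edge (or a non-edge), and for the backward direction you need every surviving block to be exactly one of the finitely many legal edge-encodings. That is not a property checkable by forbidden induced subgraphs on $2$ or $3$ vertices: a ``garbage'' block (for instance, one containing the legal patterns of two different colours between the \emph{same} pair of runs) has all of its small induced coloured subgraphs already occurring in honest encodings, so no sound small rigidity graph excludes it; and since it is not the exact pattern of any colour, it also contains no $\mathrm{enc}(\Ff)$. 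Such a realization $G'$ can then be $\Fc'$-free while every choice of ``convenient colour and orientation'' in $\mathrm{dec}(G')$ creates a member of $\Fc$ (take $\Fc$ to forbid all coloured orientations of a cross edge between the two interior colours, so $\Hf$ has no $\Fc$-free realization at all), and the equivalence fails. The step ``every remaining edge of $G'$ between decoded vertices is given a convenient colour'' is precisely where $\Fc$-freeness of the decoded object stops being guaranteed by $\Fc'$-freeness of $G'$.

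This is also where your proposal diverges from what the paper actually does, and why the paper's route avoids the problem. The paper does not give a generic reduction from the directed multicoloured problem; it re-encodes only the specific Wang-tile instances of Lemma~\ref{lem:dicolor}. There the intended realizations have \emph{every} red vertex adjacent to every blue vertex, which makes nonadjacency-based forbidden graphs (its types 4--8, e.g.\ ``a colour-$i$ vertex nonadjacent to a colour-$(-j)$ vertex, $i\ne j$'') sound, and these force the colours along the urpath interiors; and the rung-generated bipartite blocks are policed by the type-1 forbidden graphs, which are graphs on $2K$ vertices (two full runs plus every illegal pattern between them), not constant-size graphs. So either you must allow rigidity graphs of size comparable to the gadgets and carefully verify soundness (they must avoid all honest encodings of all realizations, which in a generic reduction is delicate because nonadjacencies between arbitrary colour pairs do occur), or you must, as the paper does, exploit the special structure of the Wang-tile instance. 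As written, the constant-size rigidity family cannot do the job, so the crux of your argument is not merely unverified but false in the stated form.
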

\begin{proof}
Given an instance $(S, f')$ of the problem in Corollary~\ref{cor:specialundecidable}, let $\Hf,\Fc$ be the data constructed in the proof of Lemma~\ref{lem:dicolor}. We may assume $\abs{S}\ge 3$; otherwise add some dummy tiles to $S$ that are not allowed to be adjacent to anything. Let $K=\abs{S}$. Let $\Hf'$ be the vertex-colored pathograph, with colors from $\{1,\dots,K\}\cup\{-1,\dots,-K\}$, formed from $\Hf$ by the following process:
\begin{itemize}
    \item Replace each red vertex $x_i$ by a path $P_i$ (undirected, uncolored) of length $K$, whose vertices are named $x_i^{(1)},x_i^{(2)},\dots,x_i^{(K)}$ and colored $1,2,\dots,K$ in order.
    \item Replace each blue vertex $y_i$ by a path $Q_i$ (undirected, uncolored) of length $K$, whose vertices are named $y_i^{(1)},y_i^{(2)},\dots,y_i^{(K)}$ are colored $-1,-2,\dots,-K$ in order.
    \item Replace each red edge/urpath $x_ix_{i+1}$ with an edge/urpath between $x_i^{(K)}$ and $x_{i+1}^{(1)}$; replace each blue edge/urpath $y_iy_{i+1}$ with an edge/urpath between $y_i^{(K)}$ and $y_{i+1}^{(1)}$.
    \item Suppose the edge from $x_i$ to $y_j$ has color $t_k$. Replace it with a complete bipartite graph between the sets $\{x_i^{(\alpha)}\}_\alpha$ and $\{y_j^{(\alpha)}\}_\alpha$, minus one edge between $x_i^{(k)}$ and $y_j^{(k)}$.
    \item Forget the direction of all spokes and rungs.
\end{itemize}
See Figure~\ref{fig:vcolor} for a small example.

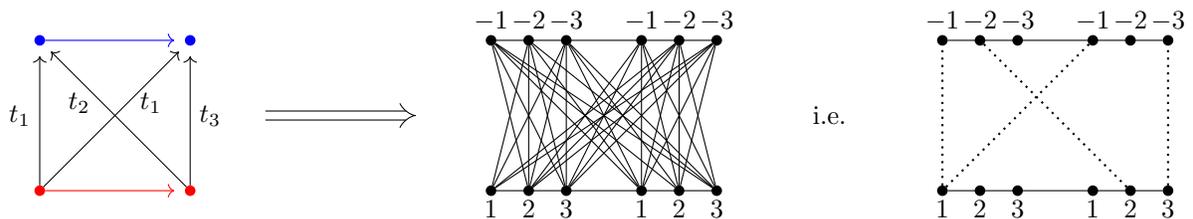
\begin{figure}[htbp!]
    \centering
    \begin{tikzpicture}[scale=2]
        \cvert{0}{0,0}{red};
        \cvert{1}{1,0}{red};
        \cvert{2}{0,1}{blue};
        \cvert{3}{1,1}{blue};

        \dicoledge{0}{1}{red};
        \dicoledge{2}{3}{blue};
        \draw[->, shorten >=4pt] (0) -- (2) node[midway, left] {$t_1$};
        \draw[->, shorten >=4pt] (0) -- (3) node[near end, below=0.05] {$t_1$};
        \draw[->, shorten >=4pt] (1) -- (2) node[near end, below=0.05] {$t_2$};
        \draw[->, shorten >=4pt] (1) -- (3) node[midway, right] {$t_3$};

        \draw[double distance=3pt, arrows={-Implies}, scaling nfold=2] (1.5, 0.5) -- (2.5, 0.5);

        \alabvert{0}{3,0}{1}{below};
        \alabvert{1}{3.25,0}{2}{below};
        \alabvert{2}{3.5,0}{3}{below};
        \alabvert{3}{4,0}{1}{below};
        \alabvert{4}{4.25,0}{2}{below};
        \alabvert{5}{4.5,0}{3}{below};

        \alabvert{6}{3,1}{-1}{above};
        \alabvert{7}{3.25,1}{-2}{above};
        \alabvert{8}{3.5,1}{-3}{above};
        \alabvert{9}{4,1}{-1}{above};
        \alabvert{10}{4.25,1}{-2}{above};
        \alabvert{11}{4.5,1}{-3}{above};

        \edge{0}{5};
        \edge{6}{11};

        \foreach \x in {0,...,5}
        \foreach \y in {6,...,11}
        \ifthenelse{\(\x=0 \AND \y=6\) \OR \(\x=0 \AND \y=9\) \OR \(\x=4 \AND \y=7\) \OR \(\x=5 \AND \y=11\)}{}{\edge{\x}{\y}};

        \node at (5.25, 0.5) {i.e.};
        
        \alabvert{0}{6,0}{1}{below};
        \alabvert{1}{6.25,0}{2}{below};
        \alabvert{2}{6.5,0}{3}{below};
        \alabvert{3}{7,0}{1}{below};
        \alabvert{4}{7.25,0}{2}{below};
        \alabvert{5}{7.5,0}{3}{below};

        \alabvert{6}{6,1}{-1}{above};
        \alabvert{7}{6.25,1}{-2}{above};
        \alabvert{8}{6.5,1}{-3}{above};
        \alabvert{9}{7,1}{-1}{above};
        \alabvert{10}{7.25,1}{-2}{above};
        \alabvert{11}{7.5,1}{-3}{above};

        \edge{0}{5};
        \edge{6}{11};

        \draw[dotted,thick] (0) -- (6);
        \draw[dotted,thick] (0) -- (9);
        \draw[dotted,thick] (4) -- (7);
        \draw[dotted,thick] (5) -- (11);
    \end{tikzpicture}
    \caption{Example translation from a directed multicolored pathograph to a vertex-colored pathograph. In this example, $K=3$. The labels of black edges/vertices indicate their color, when applicable. In the rightmost picture, we have just drawn the non-edges between the two sides, represented as dotted lines.}
    \label{fig:vcolor}
\end{figure}

Let $\Fc'$ be the following set of vertex-colored pathographs:
\begin{enumerate}
    \item Every graph formed by a path $x_1\cdots x_K$ where the color of $x_i$ is $i$ and a path $y_1\cdots y_K$ where the color of $y_i$ is $-i$ and any set of edges between the two paths except if those edges form a bipartite graph minus the edge $x_iy_i$ for some $i$.
    \item The graphs formed by applying the translation process to the type 2 graphs of $\Fc$.
    \item The graphs formed by applying the translation process to the type 3 graphs of $\Fc$.
\end{enumerate}

Let $\Fc''$ be $\Fc'$ plus the following vertex-colored pathographs (with no urpaths), continuing the numbering from the definition of $\Fc'$:
\begin{enumerate}[start=4]
    \item For each $i,j\in\{1,\dots,K\}$ with $i-j\not\equiv \pm 1\bmod K$, a pathograph consisting of one vertex of color $i$, one vertex of color $j$, and an edge between them.
    \item For each $i,j\in\{1,\dots,K\}$ with $i-j\not\equiv \pm 1\bmod K$, a pathograph consisting of one vertex of color $-i$, one vertex of color $-j$, and an edge between them.
    \item For each $i\in\{1,\dots,K\}$, a pathograph consisting of one vertex $v_1$ of color $i$ and two vertices $v_2,v_3$ of color $i+1$ (or $1$ if $i=K$), with edges $v_1v_2$ and $v_1v_3$.
    \item For each $i\in\{1,\dots,K\}$, a pathograph consisting of one vertex $v_1$ of color $-i$ and two vertices $v_2,v_3$ of color $-(i+1)$ (or $-1$ if $i=K$), with edges $v_1v_2$ and $v_1v_3$.
    \item For each $i,j\in\{1,\dots,K\}$ with $i\ne j$, a pathograph consisting of a vertex of color $i$, a vertex of color $-j$, and no edges.
\end{enumerate}

Now we claim that there is an $\Fc''$-free realization of $\Hf'$ if and only if there is an $\Fc$-free realization of $\Hf$.

($\Rightarrow$) Suppose there is an $\Fc''$-free realization $G'$ of $\Hf'$. Consider the colors of the vertices on the path $P=x_3^{(K)}p_1p_2\cdots p_\alpha x_1^{(1)}$ of $G'$, corresponding to the $x_3^{(K)}$-$x_1^{(1)}$ urpath in $\Hf'$. For each $\ell$, since $p_\ell$ is not adjacent to $x_2^{(1)}$ and not adjacent to $x_2^{(2)}$ (note there is no spoke from these vertices to the relevant urpath), its color must be positive due to the members of $\Fc''$ of type 8. Now the color of $p_1$ must be either $K-1$ or $1$ since it is adjacent to $x_3^{(K)}$ due to the members of $\Fc''$ of type 4; since $x_3^{(K)}$ is adjacent to $x_3^{(K-1)}$, the color of $p_1$ must actually be 1 due to the members of $\Fc''$ of type 6. By similar logic, the color of $p_\ell$ must be positive and congruent to $\ell$ modulo $K$ for all $\ell$. Likewise, for all $m$, the colors of the vertices $q_m$ on the path $P=y_3^{(K)}q_1q_2\cdots q_\beta y_1^{(1)}$ must all be negative and congruent to $-m$ modulo $K$, due to the members of $\Fc''$ of types 8, 5, and 7. Call this \textit{fact ($*$)}.

Now we form $G$, an $\Fc$-free realization of $\Hf'$, by the following process:
\begin{itemize}
    \item Replace each path $P=v_P^{(1)}\cdots v_P^{(K)}$ with colors $1,\dots,K$ in order by a single red vertex $x_P$. Call such paths $P$ \textit{red-forming paths}.
    \item Replace each path $Q=v_Q^{(1)}\cdots v_Q^{(K)}$ with colors $-1,\dots,-K$ in order by a single blue vertex $y_Q$. Call such paths $Q$ \textit{blue-forming paths}.
    \item Replace each edge between a vertex $v_P^{(K)}$ of color $K$ and a vertex $v_{P'}^{(1)}$ of color $1$ by a red directed edge from $v_P$ to $v_{P'}$.
    \item Replace each edge between a vertex $v_Q^{(K)}$ of color $-K$ and a vertex $v_{Q'}^{(1)}$ of color $-1$ by a blue directed edge from $v_Q$ to $v_{Q'}$.
    \item For each red-forming path $P$ and blue-forming path $Q$, let $v_P^{(\ell)}v_Q^{(\ell)}$ be the unique nonedge from $P$ to $Q$ in $G'$. Replace the bipartite graph between $P$ and $Q$ by a directed edge from $x_P$ to $y_Q$ of color $t_\ell$.
\end{itemize}
We must check two things to verify that this process is well-defined:
\begin{itemize}
    \item Each vertex of $G'$ is part of a unique red-forming or blue-forming path (red-forming if the color is positive and blue-forming if the color is negative). This is due to fact ($*$) above.
    \item For each pair of red-forming path $P$ and blue-forming path $Q$, there is exactly one nonedge from $P$ to $Q$, and this occurs between vertices of colors $\ell$ and $-\ell$ for some $\ell$. This is due to the members of $\Fc''$ of type 1.
\end{itemize}
Then it is easy to see that the resulting graph $G$ is indeed an $\Fc$-free realization of $\Hf'$. The fact it has no graphs from $\Fc$ of type 1 is clear from construction, and it has no graphs of types 2 and 3 in $\Fc$ because $G'$ has no graphs of types 2 and 3 in $\Fc''$.

($\Leftarrow$) Suppose that $G$ is an $\Fc$-free realization of $\Hf$. Recall the process that was used to convert $\Hf$ to $\Hf'$. Do this process on $G$ to obtain vertex-colored graph $G'$. It is easy to check that $G'$ is an $\Fc''$-free realization of $\Hf'$.

Finally, note that each pathograph in $\Fc''$ has no urpaths, and $\Hf'$ has just one rung, completing the proof.
\end{proof}

\subsection{Uncolored pathographs}

Now we can complete the proof of Theorem~\ref{thm:undecidable}.

\begin{proof}[Proof of Theorem~\ref{thm:undecidable}]
Given an instance $(S, f')$ of the problem in Corollary~\ref{cor:specialundecidable}, let $\Hf',\Fc''$ be data constructed in the proof of Lemma~\ref{lem:vcolor}. Let us assume $\abs{S}\ge 9$ (if not, add some dummy tiles to $S$ that are not allowed to be adjacent to any other tiles or themselves, which does not affect whether or not $S$ admits a periodic tiling). Let $K=\abs{S}$. Let $\Hf''$ be the pathograph constructed from $\Hf'$ by the following process:
\begin{itemize}
    \item Add a clique $C$ of size $3K$, say with vertices $c_1,\dots,c_{3K}$.
    \item For each $i\in \{1,\dots,2K\}$, add a vertex $z_i$ adjacent to $c_1,\dots,c_i$.
    \item For each vertex $v$ of color $i>0$, add an edge between $v$ and $z_i$, then forget the color of $v$.
    \item For each vertex $v$ of color $i<0$, add an edge between $v$ and $z_{K-i}$ (note that $K-i\in\{K+1,\dots,2K\}$), then forget the color of $v$.
    \item Add a spoke between each $z_i$ and urpath.
\end{itemize}
See Figure~\ref{fig:uncolored} for a small example.
\begin{figure}[htbp!]
    \centering
    \begin{tikzpicture}
    \alabvert{t1}{-2.5,-4}{1}{below};
    \alabvert{t2}{-1.5,-4}{2}{below};
    \alabvert{t3}{-0.5,-4}{3}{below};
    \alabvert{tn1}{0.5,-4}{-1}{below};
    \alabvert{tn2}{1.5,-4}{-2}{below};
    \alabvert{tn3}{2.5,-4}{-3}{below};
    \edge{t1}{t3};
    \edge{tn1}{tn3};

    \draw[double distance=3pt, arrows={-Implies}, scaling nfold=2] (3, -3.5) -- (5, -3.5);

    \tikzset{shift={(8,0)}};
    
    \foreach \x in {1,...,6} \vert{\x}{{-5.303*cos((\x+0.5)*11.25+45)},{-3*sin((\x+0.5)*11.25+45)+1.5}};
    \vert{7}{-1.25,-0.5};
    \vert{8}{0,-0.3};
    \vert{9}{1.25,-0.5};
    \foreach \x in {1,...,9} \foreach \y in {1,...,9} \ifthenelse{\x=\y}{}{\edge{\x}{\y}};
    \foreach \x in {1,...,6} \vert{a\x}{{\x - 3.5}, -3};
    \foreach \x in {1,...,6} \foreach \y in {1,...,\x} \edge{\y}{a\x};

    \vert{t1}{-2.5,-4};
    \vert{t2}{-1.5,-4};
    \vert{t3}{-0.5,-4};
    \vert{tn1}{0.5,-4};
    \vert{tn2}{1.5,-4};
    \vert{tn3}{2.5,-4};
    \edge{t1}{t3};
    \edge{tn1}{tn3};
    \edge{t1}{a1};
    \edge{t2}{a2};
    \edge{t3}{a3};
    \edge{tn1}{a4};
    \edge{tn2}{a5};
    \edge{tn3}{a6};

    \node[right] (z) at (3,-3) {$\{z_i\}_{i=1}^{2K}$};
    \node[right] (z) at (3,-0.9) {$\{c_j\}_{j=1}^{3K}=C$};

    \draw[thick, decorate, decoration={brace, amplitude=5pt}] (2.8, -2.75) -- (2.8, -3.25);
    \draw[thick, decorate, decoration={brace, amplitude=5pt}] (2.8, -0.05) -- (2.8, -1.75);
    \end{tikzpicture}
    \caption{Example translation from a vertex-colored pathograph to a (conventional) pathograph. Here we have $K=3$, but in the actual proof, we assume $K\ge 9$. As before, vertex labels in vertex-colored graphs represent colors. This example does not have any urpaths, but if it did, there should be a spoke from each $z_i$ to each urpath.}
    \label{fig:uncolored}
\end{figure}
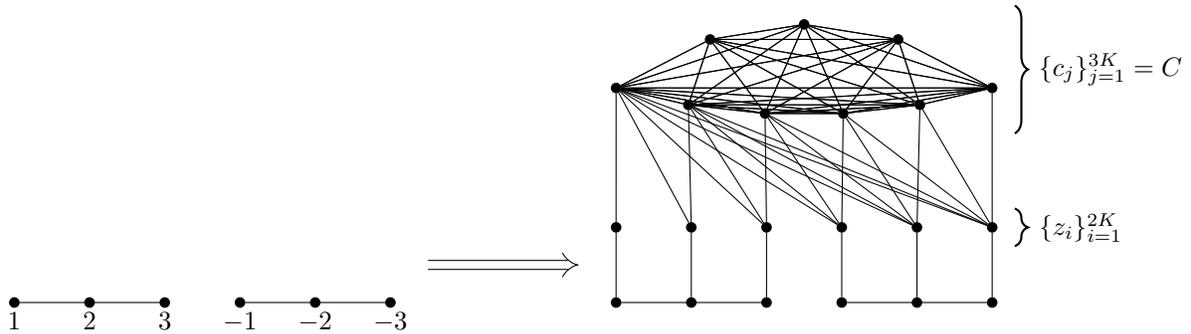

Do the same for all pathographs in $\Fc''$ to obtain the set $\Fc'''$, with the types of the graphs in $\Fc'''$ inherited from the types in $\Fc''$. Let $\Fc''''$ be $\Fc'''$ plus the following pathographs, continuing the numbering from the definition of $\Fc''$ (and $\Fc'''$):
\begin{enumerate}[start=9]
    \item For each $i,j\in \{1,\dots,2K\}$, a pathograph consisting of a clique $C$ of size $3K$ with vertices $c_1,\dots,c_{3K}$, vertices $z_i$ and $z_j$ adjacent to $\{c_1,\dots,c_i\}$ and $\{c_1,\dots,c_j\}$, respectively, and a vertex $v$ adjacent to $z_i$ and $z_j$.
    \item For each $i\in\{1,\dots,2K\}$, a pathograph consisting of a clique $C$ of size $3K$, a vertex $z_i$ adjacent to $i$ vertices of $C$, and a path $P=v_1\cdots v_{K+1}$ with $v_1$ adjacent to $z_i$.
\end{enumerate}

Now we claim that there is an $\Fc''''$-free realization of $\Hf''$ if and only if there is an $\Fc''$-free realization of $\Hf'$.

($\Rightarrow$) Suppose there is an $\Fc''''$-free realization $G''$ of $\Hf''$. Note that $G''$ may be partitioned into the following sets:
\begin{itemize}
    \item Two induced cycles $O_x,O_y$.
    \item A clique $C$ of size $3K$, say with vertices $c_1,\dots,c_{3K}$.
    \item $2K$ vertices $\{z_i\}_i$ adjacent to $C$, where $z_i$ is adjacent to $c_1,\dots,c_i$ for each $1\le i\le 2K$.
\end{itemize}
Every vertex of $O_x$ and $O_y$ must be adjacent to at most one $z_i$ due to the members of $\Fc''''$ of type 9. We will first prove that every vertex of $O_x$ and $O_y$ is in fact adjacent to exactly one $z_i$.

Note that $\Hf''$ has two urpaths, say $u_x$ between $x_3^{(K)}$ and $x_1^{(1)}$ and $u_y$ between $y_3^{(K)}$ and $y_1^{(1)}$. Let $P=x_3^{(K)}p_1\dots p_\alpha x_1^{(1)}$ be the path of $G''$ corresponding to $u_x$. Note that there is a path $x_3^{(1)}x_3^{(2)}\cdots x_3^{(K)}p_1$ of length $K$, and $x_3^{(i)}$ is adjacent to $z_i$ for each $i$. Due to the members of $\Fc''''$ of type 10, some member of this path other than $x_3^{(1)}$ must also be adjacent to $z_1$. But since all of the $x_3^{(i)}$, $i\ne 1$, are already adjacent to a $z_i$ that isn't $z_1$, the only possibility is that $p_1$ is adjacent to $z_1$. Repeating this argument with the path $x_3^{(2)}\cdots x_3^{(K)}p_1p_2$, we find that $p_2$ must be adjacent to $z_2$. This continues logic continues, so we find $p_\ell$ is adjacent to $z_{\ell'}$ where $\ell\equiv \ell'\bmod K$ and $\ell'\in\{1,\dots,K\}$. Similarly, if $Q=y_3^{(K)}q_1\dots q_\beta x_1^{(1)}$ is the path in $G''$ corresponding to the urpath $u_y$ of $\Hf''$, then each $q_m$ must be adjacent to $z_{K+m'}$ where $m\equiv m'\bmod K$ and $m'\in\{1,\dots,K\}$.

Let $G'$ be the vertex-colored graph obtained by replacing all of the vertices adjacent to $z_i$ by vertices of color $i$ if $i\le K$ and vertices of color $i-2K$ if $i > K$, then deleting $C$ and all $z_i$. It is easily verified that $G'$ is an $\Fc''$-free realization of $\Hf'$, as desired.

($\Leftarrow$) Suppose there is an $\Fc''$-free realization $G'$ of $\Hf'$. Recall the process used to form $\Hf''$ from $\Hf'$; perform this same process on $G'$ to obtain graph $G''$. We claim that $G''$ is an $\Fc''''$-free realization of $\Hf''$. The fact that it is a realization of $\Hf''$ is obvious.

Note that the only clique of size $3K$ in $G''$ is $C$. This may be seen by noting that $G''\setminus(C\cup\{z_i\}_i)$ is the (not disjoint) union of two induced cycles, so has chromatic number at most 6. The vertices $\{z_i\}_i$ form a stable set, so $G''\setminus C$ has chromatic number at most 7. But $K\ge 9$, so any clique of size $3K$ in $G''$ uses at least $3K-7$ vertices of $C$. Since any vertex not in $C$ has at most $2K<3K-8$ neighbors in $C$, any clique of size $3K$ must be $C$ itself.

Therefore, $G''$ contains no $\Ff'\in \Fc''''$ of types 1 through 8. Indeed, if it did, let $\Ff$ be the pathograph in $\Fc''$ corresponding to $\Ff'$. The clique of size $3K$ in $\Ff'$ must be mapped to $C$ in $G''$, and any vertices adjacent to the clique in $\Ff'$ must be mapped to $\{z_i\}_i$ in $G''$; let the rest of $\Ff'$ be mapped to vertex set $S\subseteq V(G'')\setminus (C\cup \{z_i\}_i)$. Then undoing the transformation from $G'$ to $G''$ by coloring the vertices in $G''$ according to which $z_i$ they are adjacent to, and deleting $C\cup \{z_i\}_i$, we find that $\Ff$ appears in $G'$ in the vertex set $S$.

By construction, $G''$ contains no member of $\Fc''''$ of type 9. Type 10 elements of $\Fc''''$ require more careful analysis. Recall that $G'''\coloneq G''\setminus(C\cup\{z_i\}_i)$ is the union of two induced cycles, say $O_x$ and $O_y$. We claim that all induced paths of length $K+1$ in $G'''$ are subsets of $O_x$ or $O_y$.

Let the vertices of $O_x$ be $x_1,\dots, x_\alpha$ and $O_y$ be $y_1,\dots, y_\beta$ in order (so $x_i$ is adjacent to $x_{i-1}$ and $x_{i+1}$ and $y_j$ is adjacent to $y_{j-1}$ and $y_{j+1}$, taking indices cyclically modulo $\alpha$ or $\beta$, respectively). Suppose $x_i$ is not adjacent to $y_j$. Then $i\equiv j\bmod K$, and $x_i$ must be adjacent to both $y_{j-1}$ and $y_{j+1}$, due to the members of $\Fc''$ of type 8. Suppose there is induced path $R$ of $H$ involving both vertices in $O_x$ and $O_y$. Then we claim this path can have length at most 6. There are two cases to consider:
\begin{itemize}
    \item Suppose two adjacent vertices of $R$ are both from $O_x$ or both from $O_y$. The existence of such vertices implies that there are three consecutive vertices $v_{i-1}v_iv_{i+1}$ of $R$ with either $v_{i-1},v_i\in O_x$ and $v_{i+1}\in O_y$, $v_{i-1},v_i\in O_y$ and $v_{i+1}\in O_x$, $v_{i-1}\in O_x$ and $v_i,v_{i+1}\in O_y$, or $v_{i-1}\in O_y$ and $v_i,v_{i+1}\in O_x$. All cases are symmetrical, so just consider the first case.
    
    Then $v_{i+2}$, if it exists, cannot be in $O_y$ or else it would be adjacent to $v_{i-1}$. So $v_{i+2}$ is in $O_x$, and $v_{i+3}$, if it exists, cannot be in $O_y$ or it would be adjacent to either $v_{i-1}$ or $v_i$. So $v_{i+3}$ is in $O_x$, and $v_{i+4}$ cannot possibly exist: if $v_{i+4}$ is in $O_x$ then it is adjacent to $v_{i+1}$, and if it is in $O_y$ then it is adjacent to either $v_{i-1}$ or $v_i$.

    Also, $v_{i-2}$, if it exists, cannot be in $O_x$ or it would be adjacent to $v_{i+1}$. So $v_{i-2}$ is in $O_y$. Then $v_{i-3}$ cannot exist: if $v_{i-3}$ is in $O_x$ then it would be adjacent to $v_{i+1}$, and if $v_{i-3}$ is in $O_y$ then it would be adjacent to either $v_{i-1}$ or $v_i$.

    So in this case, $R$ can contain at most 6 vertices: $v_{i-2}$ through $v_{i+3}$.
    \item Suppose no two adjacent vertices of $R$ are both from $O_x$ or both from $O_y$. Suppose $v_1\in R$ is in $O_x$; the other case is symmetrical. Then $v_6$ cannot exist (where the vertices of $R$ are labeled $v_1,\dots,v_{\abs{R}}$ in order). This is because $v_4$ is adjacent to $v_3$ but not $v_1$, so if $v_1=x_i$, it must be that $v_3=x_j$ with $i\not\equiv j\bmod K$. Then $v_6$ is adjacent to either $v_1$ or $v_3$.

    So in this case, $R$ can contain at most 5 vertices.
\end{itemize}

Thus the only way that an element of $\Fc''''$ of type 10 can appear is if the needed path of length $K+1$ in $G''$ lies entirely in $O_x$ or entirely in $O_y$, since $K\ge 9$. Also, one must note that this path cannot contain any of the $z_i$ in $G''$, since those are all adjacent to at least one vertex of $C$, and if an element $\Ff\in \Fc''''$ of type 10 appears in $G''$, the clique of size $3K$ in $\Ff$ must be mapped to $C$ in $G''$, as we have previously shown. But by construction, every $K$th vertex of $O_x$ (or $O_y$) is adjacent to the same $z_i$, i.e.\ if $v_1\cdots v_{K+1}$ is an induced path in $G''$ and $v_1$ is adjacent to $z_i$, then $v_{K+1}$ is also adjacent to $z_i$. We conclude that $G''$ is $\Fc''''$-free.

One can easily verify that $\Hf''$ has only one rung and no $\Ff\in\Fc''''$ have any urpaths, completing the proof of Theorem~\ref{thm:undecidable}.
\end{proof}

\subsection{Related problems}
\label{sec:variants}

We remark that what we have really proven is the following:

\paths*

\begin{proof}
Let $\Hf$ and $\Fc$ be the data constructed in the proof of Theorem~\ref{thm:undecidable}. Let $u_1$ and $u_2$ be the two urpaths of $\Hf$. Let $a$ and $b$ be the endpoints of $u_1$ and $c$ and $d$ be the endpoints of $u_2$, and let $U_1$ and $U_2$ be the sets of vertices adjacent to $u_1$ and $u_2$ (via spokes), respectively. Then let $H$ be the graph formed by deleting the urpaths of $\Hf$, with chosen vertices $a,b,c,d$ and sets of vertices $U_1$ and $U_2$. Then it is not difficult to see that the problem described in the statement of the corollary applied to the data $(\Fc, H, a, b, c, d, U_1, U_2)$ is exactly the same as the pathograph realization problem applied to the data $(\Hf, \Fc)$.
\end{proof}
In other words, our result may be reformulated using conventional graph terminology, with no mention of pathographs.

We now discuss two restricted forms of the pathograph realization problem. First, for a fixed finite set of pathographs $\Fc$, we consider the following problem:
\begin{problem}\label{prob:restrict1}
Given a pathograph $\Hf$, is there an $\Fc$-free realization of $\Hf$?
\end{problem}
Second, for a fixed pathograph $\Hf$, we consider:
\begin{problem}\label{prob:restrict2}
Given a finite set of pathographs $\Fc$, is there an $\Fc$-free realization of $\Hf$?
\end{problem}

To be clear, each finite set of pathographs gives a different variant of Problem~\ref{prob:restrict1}. The input to that problem is just the pathograph $\Hf$. Likewise the input to Problem~\ref{prob:restrict2} is just $\Fc$; there is a different variant for each $\Hf$. One can prove the following:
\begin{theorem}\label{thm:restricted}
There exists a finite set of pathographs $\Fc$ so that the variant of Problem~\ref{prob:restrict1} with that $\Fc$ is undecidable. There exists a pathograph $\Hf$ so that the variant of Problem~\ref{prob:restrict2} with that $\Hf$ is undecidable.
\end{theorem}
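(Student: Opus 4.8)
The plan is to revisit the reduction behind Theorem~\ref{thm:undecidable} and track exactly which part of the output pair $(\Hf'',\Fc'''')$ depends on which part of the input Wang instance. Inspecting the chain Lemma~\ref{lem:dicolor} $\to$ Lemma~\ref{lem:vcolor} $\to$ the proof of Theorem~\ref{thm:undecidable}, one sees that the forbidden set $\Fc''''$ depends on the tile set $S$ only through $K=\abs{S}$ and through which ordered pairs of tiles are horizontally (resp.\ vertically) compatible; in particular it does \emph{not} depend on the size $n$ of the seed region or on the seed $f'$. By contrast, $\Hf''$ depends only on $K$, on $n$, and on $f'$. (One first checks the routine point that the reduction, written for $n=3$ in Lemma~\ref{lem:dicolor}, goes through verbatim for arbitrary $n$, since each step only manipulates $\Hf$ and $\Fc$ locally.) Consequently, the first assertion reduces to producing a single finite Wang tile set $S_0$ for which the \emph{seeded} periodic tiling problem (input: $n$ and $f'\colon[1,n]^2\cap\Z^2\to S_0$) is undecidable, and the second reduces to producing a single integer $K$ for which the periodic tiling problem, restricted to tile sets with exactly $K$ tiles over an arbitrary finite color alphabet (with a fixed tile pinned at the origin), is undecidable.

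For the first assertion, fix a universal Turing machine $U$ and let $S_0$ be the tile set obtained by the standard construction underlying the undecidability of the periodic tiling problem~\cite{gk}, but with $U$'s input tape \emph{not} hard-wired into the tiles: given a word $w$, one instead encodes the initial configuration of $U$ on $w$ into a seed $f'_w$ supported on an $n_w\times n_w$ square, and one arranges (exactly as in~\cite{gk}) that $S_0$ tiles the plane consistently with $f'_w$ if and only if it does so periodically, if and only if $U$ halts on $w$. Since halting is undecidable in $w$, the seeded periodic tiling problem for this fixed $S_0$ is undecidable. Applying the reduction of Theorem~\ref{thm:undecidable} with $S:=S_0$ then yields a fixed finite set of pathographs $\Fc:=\Fc''''(S_0)$ together with a computable map $(n,f')\mapsto\Hf''$, where $\Hf''$ admits an $\Fc$-free realization iff $S_0$ periodically tiles consistently with $f'$. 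Hence the corresponding variant of Problem~\ref{prob:restrict1} is undecidable.

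For the second assertion, one uses a fixed number $K$ of abstract tiles $t_1,\dots,t_K$ and varies only the colors (over an arbitrary finite alphabet) assigned to their sides: encode a Gödel number of $w$ into a single unbounded color carried by $t_1$, and let the remaining, fixed tiles decode it and simulate $U$, again arranging, as in~\cite{gk}, that $U$ halts on $w$ iff the resulting tile set admits a (periodic) tiling. This stays undecidable if we pin $t_1$ at the origin (otherwise, taking the disjunction over the $K$ choices of origin tile and relabeling, one could decide the unconstrained problem). Now apply the reduction of Theorem~\ref{thm:undecidable} with this $K$, with $n:=1$, and with $f'$ pinning position $(1,1)$ to $t_1$: then $\Hf:=\Hf''$ depends only on $K$, $n$, $f'$ and is thus a single fixed pathograph, while $S\mapsto\Fc''''(S)$ is computable and $\Hf$ admits an $\Fc''''(S)$-free realization iff $S$ tiles periodically with $t_1$ at the origin. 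Hence the corresponding variant of Problem~\ref{prob:restrict2} is undecidable.

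The main obstacle is the second tiling fact: undecidability of periodic tiling for a \emph{bounded} number of tiles over an unbounded color alphabet. This is the least off-the-shelf ingredient, and carrying it out requires verifying that a constant number of Wang tiles, using only the freedom in the color set, can decode an arbitrary input and run the universal simulation, and that periodicity can still be tied to halting (the seeded version in the first assertion is by comparison a routine externalization of the input in the classical construction). A secondary point, already noted above, is to confirm that no step of the reduction of Theorem~\ref{thm:undecidable} implicitly used $n=3$ or a nontrivial seed; this is straightforward from the construction, but should be stated explicitly.
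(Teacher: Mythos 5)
Your treatment of the first assertion is essentially the paper's own proof: fix a tile set arising from a universal machine, keep $\Fc''''$ fixed, and encode the input into the seed patch, hence into $\Hf''$ (the paper phrases the tiling side via ``strongly loops'' and Proposition~\ref{prop:encoding} rather than halting, but the architecture is the same, and your observation that nothing in the reduction uses $n=3$ is the same routine check the paper makes when it replaces the triangles by cycles of length $k$ and $\ell$).

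The second assertion, however, has a genuine gap, and it is exactly at the point you flag as the ``main obstacle'': the periodic tiling problem for tile sets with a \emph{bounded} number $K$ of tiles is decidable, no matter how large the color alphabet is. Whether a Wang tile set tiles (periodically, with a pinned tile, etc.) depends only on the horizontal and vertical compatibility relations on $S\times S$, i.e.\ on at most $2K^2$ bits of data; an unbounded color alphabet adds nothing, since only equality of abutting colors is ever consulted. So for fixed $K$ there are only finitely many inequivalent instances, and the problem you need to be undecidable is trivially decidable; encoding a G\"odel number of $w$ into ``a single unbounded color carried by $t_1$'' cannot be decoded by the remaining constantly many tiles. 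The same finiteness also shows up downstream: as you yourself note, $\Fc''''(S)$ depends on $S$ only through $K$ and the compatibility relations, so with $K$ fixed your reduction produces only finitely many distinct sets $\Fc''''(S)$, which can never witness undecidability of Problem~\ref{prob:restrict2}. The paper avoids this by varying $\Fc$ in a different way: it keeps the tile set $S_{M^*}$ (and hence $\Hf''$ and the base set $\Fc''''$) completely fixed and encodes the varying data as \emph{additional forbidden graphs} $G(W)$, one per forbidden tape section $W\in\Tc$, appended to $\Fc''''$; an $\Fc''''\cup\{G(W)\}_{W\in\Tc}$-free realization then corresponds to a periodic tiling avoiding the patches $f_{M^*}(W)$. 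The undecidability source is the bespoke Lemma~\ref{lem:bespoke}: a single machine $M^*$ for which it is undecidable, given $\Tc$, whether some input makes $M^*$ strongly loop while the tape avoids every section in $\Tc$ (proved via Rice's theorem and a structured universal machine). That idea---varying the input through forbidden patterns rather than through the tile set's combinatorics---is what your proposal is missing, and without it the second half does not go through.
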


In order to prove this, we will need to encode Turing machines as Wang tiles. There are many ways to do this, it is done in~\cite{berger} for example. For our purposes, we will simply need an encoding with properties according to the following proposition, which we will not construct explicitly. First, we give a bit more terminology. A \textit{section} of a Turing machine tape is simply a contiguous subset of the tape, and a \textit{patch} of a tiling is a rectangular subset of the tiling. Say a Turing machine $M$ \textit{strongly loops} on input $X$ if $M$ eventually returns the tape exactly to the input $X$, with the tape head in the starting position and machine state at the starting state. It is obviously undecidable to determine if a given Turing machine strongly loops on a given input, by an easy reduction of the usual halting problem to this problem (or by appealing to Rice's theorem).
\begin{proposition}\label{prop:encoding}
Let $M$ be a Turing machine. There is a set of Wang tiles $S_M$ and a function $f_M$ that sends finite patches of Turing machine tape to finite patches using tiles from $S_M$ so that $S_M$ admits a periodic tiling containing the patch $f(X)$ if and only if $M$ strongly loops on (finite) input $X$.
\end{proposition}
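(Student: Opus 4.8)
The plan is to use a standard "spacetime diagram" encoding of a Turing machine as Wang tiles, and then tune it so that a periodic tiling is forced to correspond to a looping computation. I would set up $S_M$ so that in any valid tiling, each horizontal row encodes a configuration of $M$ (tape contents, head position, and state), and the matching rules between a row and the row immediately above it enforce exactly one step of the transition function of $M$. Concretely, tiles come in two kinds: \emph{tape tiles}, carrying a tape symbol on their left/right edges (so that consecutive tiles in a row agree on the symbol in that cell) and passing that symbol unchanged from bottom to top unless the head is present; and \emph{head tiles}, which additionally carry the machine state, and whose vertical colors implement the read/write/move behaviour of $M$. The north/south colors carry both the cell's symbol and a bit (or a state label) indicating whether the head is in this cell or is arriving from the left/right neighbour; the usual bookkeeping makes the head move exactly one cell per row. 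This is exactly the construction of~\cite{berger}, so I would cite it rather than rebuild it, extracting only the properties I need.

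Next I would define $f_M$. Given a finite input patch $X$ — i.e.\ a finite section of tape with a designated head cell in the start state — $f_M(X)$ is the single horizontal strip of tiles of width $|X|$ that spells out that configuration: tape tiles for the non-head cells and the head tile carrying the start state for the head cell, with all vertical colors chosen to be the "initial configuration" colors. (If the problem statement wants a rectangular patch rather than a strip, take height $1$; a strip is a degenerate rectangle.) The point of the encoding is that a valid tiling is a biinfinite stack of rows, each obtained from the one below by one step of $M$, and this stack is vertically periodic with period $p$ if and only if the configuration recurs after exactly $p$ steps.

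Now the two directions. If $M$ strongly loops on $X$, say it returns to configuration $X$ after exactly $p$ steps, then stacking the $p$ rows of the computation and repeating vertically gives a tiling that is periodic in the vertical direction and contains the strip $f_M(X)$; to make it doubly periodic I additionally pad $X$ on both sides with enough blank tape and arrange that $M$ never reaches the padding within $p$ steps (possible since the computation is finite), and make the blank region horizontally periodic — or, more simply, invoke the fact that from a strongly looping computation one obtains a configuration of finite support that recurs, hence a tiling periodic in both directions (one can always coarsen to a common period). Conversely, any periodic tiling containing $f_M(X)$ must, by the matching rules, have the row above $f_M(X)$ be the one-step successor of $X$, the row above that its successor, and so on; vertical periodicity with some period $p$ then forces the configuration reached after $p$ steps to equal $X$, head position and state included, which is precisely strong looping. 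The equivalence follows.

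The main obstacle I expect is the bookkeeping needed to guarantee \emph{two-dimensional} periodicity rather than merely vertical periodicity, and to ensure the head never "escapes" off the finite encoded region into a part of the tiling where the rules no longer simulate $M$. The standard fix is to work with configurations of finite support (blank outside a window that grows by at most one cell per step), choose the padding width larger than the loop length $p$, and use blank-tape tiles that tile a horizontal line with period $1$; then the tiling is $(1,p)$-periodic after fixing the finite active window, and one coarsens to a genuine doubly-periodic tiling. Handling the interface between the active window and the periodic blank background cleanly — so that no spurious head tile or state color can appear there — is the one place where the argument needs care, but it is entirely routine given the Berger-style tile set, which is why I would phrase the proposition as a black box over that construction.
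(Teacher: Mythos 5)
The paper never actually proves this proposition: it explicitly says the encoding ``will not \mbox{[be]} construct\mbox{[ed]} explicitly'' and leans on the literature, so there is no in-paper argument to compare yours against; what matters is whether your sketch is actually correct. The forward direction of your sketch is fine (when $M$ strongly loops in $p$ steps it visits only $w$ cells, so wrap a $(a\times p)$-torus with $a>w$ and repeat the $p$ computation rows). The genuine gap is in the converse direction, precisely in the two places you flag and then dismiss as routine. First, anchoring: $f_M(X)$ is a finite strip, and nothing in a Berger-style tile set forces the cells of the same row outside the patch to be blank and head-free. A periodic tiling containing the patch may encode a configuration that agrees with $X$ on the patch but carries extra non-blank symbols or additional head tiles elsewhere, so the rows above the patch simulate some other computation, and its recurrence says nothing about $M$ strongly looping on $X$. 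Second, wrap-around: in a tiling with horizontal period $a$, the simulated tape is a cycle of length $a$. Take $M$ to be a machine that simply moves right forever in a fixed state without writing: it never strongly loops on any input, yet on the cylinder the configuration returns to its starting value after $a$ steps, so the plane has a doubly periodic tiling containing $f_M(X)$. Your padding fix (``choose the padding width larger than the loop length $p$'') only applies in the forward direction, where you get to build the tiling; in the converse direction the periodic tiling, its periods, and whatever lies outside the patch are chosen adversarially, and $f_M$ cannot depend on $p$ anyway since it must be defined even when $M$ does not loop.

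So the step ``any periodic tiling containing $f_M(X)$ must, by the matching rules, have the row above $f_M(X)$ be the one-step successor of $X$'' is exactly where the proposition's content lies, and the plain spacetime tile set of Berger does not deliver it. A correct proof needs additional machinery that makes the patch genuinely anchor the computation and prevents the head from interacting with the periodic background: for example, boundary/origin columns propagated vertically that the head cannot cross, devices forcing a unique head and a blank tape outside $X$ in the anchor row, and a Gurevich--Koryakov-style ``boxing'' of the looping computation so that double periodicity certifies recurrence of the true infinite-tape configuration rather than of a cyclic-tape configuration. As written, the proposal treats the only non-routine part as a black box, so it does not yet constitute a proof.
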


We will also need the following undecidability result.
\begin{lemma}\label{lem:bespoke}
There exists a Turing machine $M^*$ with the following property. It is undecidable to determine, given a set $\Tc$ of tape sections, if there is any input $X$ so that $M^*$ loops on $X$ and no tape section in $T$ ever appears on the tape as $M^*$ runs on input $X$.
\end{lemma}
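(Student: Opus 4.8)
The plan is to construct $M^*$ by a diagonalization-style argument over an enumeration of Turing machines, so that running $M^*$ on a suitably encoded input is the same as simulating an arbitrary machine $M$ on an arbitrary input, and so that forbidding tape sections corresponds to forbidding $M$ from using certain symbols. More precisely, I would take a universal-ish construction: fix a Turing machine $M^*$ that, given an input $X$ of the form $\langle M, w\rangle$ (an encoding of a machine $M$ together with a string $w$), simulates $M$ on $w$, but does so in a way that (a) writes out, during the simulation, a verbatim copy of the transition being used, so that forbidding a certain tape section forces $M$ to avoid a certain transition, and (b) strongly loops precisely when the simulated computation halts. The key point is to arrange the simulation so that ``$M^*$ strongly loops on $X$'' is equivalent to ``$M$ halts on $w$,'' while ``tape section $\sigma$ never appears'' can be made to encode an arbitrary constraint on the simulated computation.

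Concretely, the steps would be: (1) Set up a fixed, faithful simulator $M^*$ of a universal Turing machine, with the technical tweak that after a halting state is reached, $M^*$ erases all its work tape and restores the tape head to the start, i.e.\ it strongly loops exactly on halting inputs — this is the ``strongly loops'' normalization used already in Proposition~\ref{prop:encoding}. (2) Observe that for this $M^*$, the undecidability of the halting problem gives: it is undecidable whether $M^*$ strongly loops on a given input $X$. (3) Now design the tape alphabet and encoding so that for each ``forbidden configuration'' we want to rule out, there is a distinguished tape section that appears during the run of $M^*$ on $X$ if and only if that configuration is used. The cleanest way: have $M^*$, on each simulation step, write down on a dedicated region of its tape a token naming the instruction/state/symbol just used, in a canonical spelled-out form; then ``tape section $\sigma_i$ never appears'' is equivalent to ``instruction $i$ of $M$ is never executed.'' (4) Finally, reduce from a known undecidable problem: take a machine $M$ (or more precisely a family) for which it is undecidable whether there is an input on which $M$ halts without ever executing a certain distinguished instruction; feeding this into $M^*$ and letting $\Tc$ be the singleton containing the tape section that flags that instruction gives the claimed undecidability.

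The reduction in step (4) itself needs a source of undecidability of the right shape. I would get it by a direct diagonal/padding argument: it is undecidable, given a machine $M$ and a distinguished instruction $\iota$, whether there exists an input $w$ on which $M$ halts and never uses $\iota$ — because one can reduce plain halting to this by letting $\iota$ be an unreachable ``dummy'' instruction (then the question becomes just ``does $M$ halt on some input,'' which is undecidable), and conversely the general question is obviously r.e.-hard. Composing this with the $M^* $ construction, and noting that $M^*$ is a single fixed machine independent of $M$ (the machine $M$ is part of the input $X = \langle M, w\rangle$), yields exactly Lemma~\ref{lem:bespoke}: with $\Tc$ the set of tape sections that witness use of the forbidden instruction, it is undecidable whether some input $X$ makes $M^*$ loop while avoiding every section in $\Tc$.

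The main obstacle I anticipate is bookkeeping rather than conceptual: making the simulator $M^*$ genuinely fixed (not depending on $M$) while still having the property that ``a given tape section never appears'' cleanly captures ``a given step of the simulated machine never happens.'' The subtlety is that a simulated machine $M$ has its own arbitrary alphabet, so the tokens $M^*$ writes must be encodings over $M^*$'s fixed alphabet, and I must ensure these encoded tokens (the ``tape sections'' in $\Tc$) cannot appear incidentally as part of some other encoded data on the tape — i.e.\ the encoding must be prefix-free / self-delimiting enough that the only way section $\sigma_i$ occurs is the intended one. This is routine but needs care, and it is the part I would write out in full detail; everything else (the strong-looping normalization, the reduction from halting) is standard.
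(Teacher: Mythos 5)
There is a genuine gap, and it is not the bookkeeping issue you flag at the end but the shape of the reduction itself. In the problem of Lemma~\ref{lem:bespoke} the only input is the set $\Tc$ of tape sections; the machine you want to ask about must somehow be smuggled into $\Tc$, because the simulated machine is part of the existentially quantified input $X=\langle M,w\rangle$ to the fixed machine $M^*$. Your final step takes $\Tc$ to be only the sections that flag use of the forbidden instruction of $M$. But then the question ``is there an $X$ on which $M^*$ strongly loops while avoiding $\Tc$'' is not equivalent to your source problem about $M$: the existential quantifier is free to pick a completely different pair $\langle M',w'\rangle$, and any halting pair whose flag tokens differ from the forbidden ones (which is essentially any other machine, whether or not your tokens mention $M$'s code) gives a trivial ``yes''. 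So the instances you produce are (almost) always yes-instances and nothing undecidable has been reduced. Fixing this by forbidding the flag tokens of \emph{all} other machines would require an infinite $\Tc$, which is useless for the intended application.

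The paper's proof supplies exactly the missing mechanism. Its universal machine $U$ is built so that the encoding $C_M$ of the simulated machine sits verbatim on the tape, delimited by a special symbol and written over a dedicated alphabet $I$ that is used nowhere else, and stays there unchanged throughout the run. One can then add to $\Tc$ a \emph{finite} set $\Tc_M$ of forbidden sections over $I$ (all delimited strings $\%Z\%$ with $Z\ne C_M$ of length at most $\abs{C_M}$, and all overly long $I$-strings), which forces every $\Tc$-avoiding input of $U$ to have machine part exactly $C_M$; the remaining sections $\Tc^*$ (whose existence the paper gets from a Rice-style trichotomy, giving a single target problem ``does some $X$ make $(M,X)$ $\Tc^*$-good'' that is undecidable in $M$) encode the actual behavioural constraint, via the property that $U$ reproduces the simulated tape contents on a half-tape. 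Your outline contains the other ingredients (fixed universal simulator, strong-looping normalization, behaviour flagged by tape sections), but without a device like $\Tc_M$ that uses forbidden sections to pin the existentially quantified input to the intended machine, the argument does not go through; the self-delimiting-encoding concern you raise is real but secondary to this.
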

\begin{proof}
Let us say $(M, X)$ is \textit{$\Tc$-good} if $M$ strongly loops on $X$ and the tape never contains any tape section in $T$. We wish to prove that for some machine $M^*$, given $\Tc$ it is undecidable to determine if there is any $X$ so that $(M^*, X)$ is $T$-good.

First, for any fixed set $\Tc$, by Rice's theorem one of the following must be the case:
\begin{enumerate}
    \item Every Turing machine $M$ has an input $X$ so that $(M, X)$ is $\Tc$-good.
    \item No Turing machine $M$ has an input $X$ so that $(M, X)$ is $\Tc$-good.
    \item It is undecidable to determine if a given machine $M$ has any input $X$ so that $(M, X)$ is $\Tc$-good.
\end{enumerate}
The first case cannot occur since there are Turing machines that do not strongly loop on any input. There are obviously sets $\Tc$ such that at least one machine $M$ and input $X$ is $\Tc$-good; by process of elimination, such $\Tc$ must lie in the third case. So there is some $\Tc^*$ so that determining if a given $M$ has any input $X$ such that $(M, X)$ is $\Tc^*$-good.

Let $U$ be a universal Turing machine with the following properties. The input to $U$ is an encoding of a Turing machine $M$ and input $X$. The required properties are as follows:
\begin{enumerate}
    \item The symbols used by $U$ consist of the blank symbol $\#$, a special separator symbol $\%$ and three sets of symbols: $I$ consisting of symbols used to encode the input machine $M$ (including the blank symbol used by $M$, which is different than $\#$), $A$ consisting of symbols used by $M$, and $B$ consisting of some extra symbols to by used by $U$ to perform the computation.
    \item The tape used by $U$ may be partitioned as $LCR$ (for ``left'', ``center'', ``right''), so that at all stages in the computation, $L$ consists only of symbols in $\{\#,\%\}\cup B$, $C$ consists only of symbols in $I$, and $R$ consists only of symbols in $\{\#,\%\}\cup A\cup B$.
    \item Suppose that when $M$ is run on input $X$, at step $i$ the tape is $W_i$ padded by infinitely many blank symbols on each side, where the first and last symbols of $W_i$ are not blank. Then when $U$ is run on input $(M, X)$, at each step:
    \begin{enumerate}
        \item The half-tape $R$ is either equal to $\%W_i\#^\infty$ for some $i$, or it is equal to $\%YW\#^\infty$ where $Y$ consists only of symbols in $B$ and $W$ is a strict (possibly empty) substring of $W_i$ for some $i$.
        \item For each $i$, $R$ is at some point equal to $\%W_i\#^\infty$.
        \item $C$ is constant (so at all times it just contains the encoding of $M$).
        \item $L$ contains exactly one instance of $\%$, which is the last symbol of $L$.
    \end{enumerate}
    \item The input $(M, X)$ to $U$ is provided as a tape section of the form $\% C_M\% X$ where $C_M$ encodes $C$. $U$ strongly loops on this input if and only if $M$ strongly loops on input $X$. $U$ halts on an input of any other form.
\end{enumerate}

Now we claim that this $U$ is the $M^*$ we desire. To see this, first consider our special set of tape sections $\Tc^*$, which only uses symbols in $B$. For a given $M$, let $C_M$ be the encoding of $M$ in the input to $U$ and $\Tc_M$ be the set of the following tape sections:
\begin{enumerate}
    \item All tape sections of the form $\% Z\%$ where $Z$ is length at most the length of $C_M$, $Z\ne C_M$, and $Z$ uses only symbols from $I$.
    \item All tape sections of the form $Z$ where $Z$ is length longer than the length of $C_M$ and $Z$ uses only symbols from $I$.
\end{enumerate}

Now one can check that the following are equivalent by the properties of $U$ and the construction of the set $\Tc_M$:
\begin{itemize}
    \item There is an input $Y$ so that $(U, Y)$ is $(\Tc^*\cup \Tc_M)$-good.
    \item There is an input $X'$ so that $(U, \%C_M\% X')$ is $\Tc^*$-good.
    \item There is an input $X$ so that $(M, X)$ is $\Tc^*$-good.
\end{itemize}

Since $\Tc^*$ was chosen so that it is undecidable to determine if there is such an $X$ for a given $M$ in the third statement above, it is undecidable to determine, for a given set $\Tc$, if there is any input $Y$ to $U$ so that $(U, Y)$ is $\Tc$-good, as desired.
\end{proof}

Now let us prove Theorem~\ref{thm:restricted}.
\begin{proof}[Proof of Theorem~\ref{thm:restricted}]
Let $U$ be a universal Turing machine with the following property. The input to $U$ is an encoding of a Turing machine $M$ and input $X$; the property is that $U$ strongly loops on input $(M, X)$ if and only if $M$ strongly loops on input $X$. Let $S_U$ be a set of Wang tiles that simulates $U$ according to Proposition~\ref{prop:encoding}, and $f_U$ the associated function that translates sections of Turing machine tape to patches of tiles.

Let us suppose for a given $(M, X)$ that our tile patch $P=f_U(M, X)$ is a $k\times \ell$ patch of tiles, with $k,\ell\ge 3$ depending on $(M, X)$. Now convert this patch of tiles $P$ to a directed multicolored pathograph that looks like the pathograph in Figure~\ref{fig:dimultiH}, except that the two directed triangles are replaced by directed cycles of length $k$ and $\ell$, with the edges between the two sides encoding the patch of tiles $P$. Then follow the construction in the proof of Theorem~\ref{thm:undecidable} starting from that $\Hf$. So, since we fixed our set of tiles $S_U$, we have that the resulting $\Hf''$ depends on $P$ (which depends on $M$ and $X$), while the resulting $\Fc''''$ will be the same every time (since $S_U$ is fixed). We have that the following are equivalent, by the construction:
\begin{itemize}
    \item $\Hf''$ admits an $\Fc''''$-free realization.
    \item $S$ admits a periodic tiling containing $P$.
    \item $U$ strongly loops on input $(M, X)$.
    \item $M$ strongly loops on input $X$.
\end{itemize}
Since the last problem in this list is undecidable, the first problem is also undecidable. This proves the first assertion of this theorem for the variant of Problem~\ref{prob:restrict1} corresponding to $\Fc''''$.

For the second assertion, now let $M^*$ be the Turing machine constructed in Proposition~\ref{lem:bespoke}. Let $S_{M^*}$ be a set of Wang tiles simulating $M^*$ in the sense of Proposition~\ref{prop:encoding}, and $f_{M^*}$ the associated translation function. Let $\Hf''$ and $\Fc''''$ be the data constructed in the proof of Theorem~\ref{thm:undecidable} for this $S'$. Given a tape section $W$, we first convert it to a tile patch $f_{M^*}(W)$, then apply a similar construction to the one in the proof of Theorem~\ref{thm:undecidable} to obtain a graph $G(W)$, so that the following are equivalent:
\begin{itemize}
    \item $\Hf''$ admits an $\Fc''''\cup\{G(W)\}$-free realization.
    \item $S_{M^*}$ admits a periodic tiling not containing $f_{M^*}(W)$.
    \item There is an input $X$ so that $M^*$ strongly loops on $X$ and the tape never contains the tape section $W$.
\end{itemize}

Applying the same process to a whole set $\Tc$ of tape sections, we have that the following are equivalent:
\begin{itemize}
    \item $\Hf''$ admits an $\Fc''''\cup\{G(W)\}_{W\in \Tc}$-free realization.
    \item $S_{M^*}$ admits a periodic tiling not containing $f_{M^*}(W)$ for any $W\in \Tc$.
    \item There is an input $X$ so that $M^*$ strongly loops on $X$ and the tape never contains any tape section $W\in \Tc$.
\end{itemize}

By construction of $M^*$, the last problem on this list is undecidable, so the first problem is also undecidable, proving the second assertion of this theorem for the variant of Problem~\ref{prob:restrict2} corresponding to $\Hf''$.
\end{proof}

\section{The rungless case}
\label{sec:rungless}

In this section, we prove the following:

\rungless*
\lineartime*

In the latter theorem, we are assuming that the input realization $G$ to this algorithm uses vertex labels compatible with the vertices of $\Hf$, or in other words, this algorithm does not need to compute \textit{how} $G$ is a realization of $\Hf$; that information must be given in the input. This is a necessary restriction, since the problem of determining if a given graph is a realization of $\Hf$ can itself be NP-complete (depending on what $\Hf$ is). We assume the reader is familiar with basic properties of regular languages and finite automata, which can be found in any standard text on computability theory, e.g.\ \cite{sipser}.

We give two proofs of these theorems. The first is an application of (the proof of) Courcelle's theorem. This uses the more advanced notion of a \textit{tree automaton}; for a reference on this subject, refer to \cite{tata} or the various references in Courcelle's paper \cite{courcelle}. The second proof, given in the appendix, constructs an explicit conventional finite automata to accomplish the same thing as Courcelle's tree automaton.

For the first proof, we will also need the notions of \textit{pathwidth} and \textit{treewidth}. A \textit{tree decomposition} of a graph $G$ is a pair $(T,\chi)$ where $T$ is a tree and $\chi:V(T)\to 2^{V(G)}$ is a function so that:
\begin{enumerate}
    \item For each $v\in V(G)$, there is a $t\in V(T)$ so that $v\in \chi(t)$.
    \item For each $uv\in E(G)$, there is a $t\in V(T)$ so that $u,v\in \chi(t)$.
    \item Let $\chi^{-1}(v)$ be the set of $t\in V(T)$ so that $v\in \chi(t)$. Then $\chi^{-1}(v)$ induces a connected subgraph of $T$ for each $v$.
\end{enumerate}

A \textit{path decomposition} of $G$ is a tree decomposition where $T$ is a path. The \textit{treewidth} of $G$ is the minimum over all tree decompositions $(T,\chi)$ of $G$ of the maximum of $\abs{\chi(t)} - 1$ for $t\in V(T)$. The \textit{pathwidth} of $G$ is the minimum over all path decompositions of the same quantity.

Note that path graphs all have pathwidth 1, the pathwidth of a disjoint union of graphs $G$ and $H$ is the maximum of the pathwidths of $G$ and $H$, the pathwidth of a (not necessarily disjoint) union of $G$ and $H$ is at most the sum of the pathwidths of $G$ and $H$ plus 1, and the treewidth of a graph $G$ is at most the pathwidth of $G$.

A graph property is \textit{monadic second-order} if it may be expressed as a boolean formula with quantifiers over vertices, edges, subsets of vertices, and subsets of edges, and we have the predicates expressing that a vertex is in a particular set of vertices, an edge is in a particular set of edges, and a vertex is incident to an edge.

We have the important theorem relating tree decompositions and monadic second-order properties:

\begin{theorem*}[Courcelle's theorem~\cite{courcelle}]
Testing if $G$ has a given monadic second-order property $P$ can be done in time $f(P)\abs{V(G)}$ if a bounded-width tree decomposition of $G$ is provided as input, for some function $f$.
\end{theorem*}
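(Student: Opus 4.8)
The plan is to follow Courcelle's original route via finite tree automata and a compositionality (Feferman--Vaught type) lemma; since a width-$\le k$ tree decomposition $(T,\chi)$ of $G$ is already supplied, no decomposition needs to be computed. First I would preprocess in linear time: normalize $(T,\chi)$ into a \emph{nice} tree decomposition (leaf, introduce-vertex, introduce-edge, forget, and join nodes) and read off from it a term $t$ over a finite signature $F_k$ in the algebra of $(\le k{+}1)$-boundaried graphs, whose operations are exactly introduce a labelled boundary vertex, add an edge between two labelled boundary vertices, forget (unlabel) a boundary vertex, relabel, and glue two boundaried graphs by identifying equally-labelled boundary vertices. The bound on the width is precisely what makes $F_k$ \emph{finite}; since a graph of treewidth $\le k$ has at most $k\abs{V(G)}$ edges, the term $t$ has size $O(\abs{V(G)})$, and by construction the value of $t$ in this algebra is (the incidence structure of) $G$.

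Second, I would establish the composition lemma. Fix the quantifier rank $q$ of the input property $P=\varphi$ (recall $P$ is an MSO$_2$ sentence over the two-sorted incidence structure, with quantification over vertices, edges, vertex sets, and edge sets). Define the $q$-\emph{theory} of a boundaried graph $H$ to be the set of MSO$_2$ sentences of quantifier rank $\le q$, in the signature of incidence structures enriched by constants for the boundary labels, that hold in $H$. The lemma states: for every operation symbol of $F_k$ of arity $r$, the $q$-theory of the result of applying it to $H_1,\dots,H_r$ depends only on the operation and on the $q$-theories of $H_1,\dots,H_r$. This is proved by an Ehrenfeucht--Fra\"iss\'e game argument, splitting each existentially quantified (vertex or edge) set across the parts of the glued structure. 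The key finiteness fact is that, for fixed $q$ and $k$, there are only finitely many MSO$_2$ sentences of quantifier rank $\le q$ over the finite signature at hand up to logical equivalence, so there is a finite set $Q_{q,k}$ of possible $q$-theories.

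Third, I would assemble a deterministic bottom-up tree automaton $\mathcal{A}_\varphi$ over $F_k$ with state set $Q_{q,k}$: a leaf is assigned the $q$-theory of the corresponding one-vertex boundaried graph, and an internal node labelled by an operation is assigned the $q$-theory computed from its children's states using the composition lemma; the accepting states are exactly those $q$-theories containing $\varphi$. By the composition lemma and induction on the structure of $t$, the state reached at the root of $t$ equals the $q$-theory of $G$, so $\mathcal{A}_\varphi$ accepts $t$ if and only if $G\models\varphi$; the construction of the transition table depends only on $q$ and $k$ (the standard references for this step being \cite{tata} and \cite{courcelle}). Running the fixed automaton $\mathcal{A}_\varphi$ on $t$ takes time $O(\abs{t})=O(\abs{V(G)})$, with the hidden constant and the size of $\mathcal{A}_\varphi$ depending only on $P$ and on the width bound $k$; treating $k$ as the fixed constant from the hypothesis, this is exactly the claimed $f(P)\abs{V(G)}$ bound.

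The main obstacle is the composition lemma together with the finiteness of $Q_{q,k}$: one must carefully set up MSO$_2$ over the two-sorted incidence structure so that the gluing operation --- which may identify boundary vertices but never identifies edges --- still admits a clean partition of each quantified set into its traces on the two factors, and one must check that the introduce-edge and forget operations interact well with $q$-theories (in particular that forgetting a boundary label only erases information already recoverable at rank $q$). The remaining steps --- the nice-tree-decomposition normalization and the linear-time run of a fixed tree automaton --- are routine.
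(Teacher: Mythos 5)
Your proposal is correct and follows exactly the standard tree-automaton proof of Courcelle's theorem that the paper relies on: the paper does not prove this statement itself but cites Courcelle and notes that the standard proof constructs a tree automaton processing a bounded-width tree decomposition, which is precisely your route (nice decomposition, boundaried-graph algebra, composition lemma for rank-$q$ theories, automaton whose states are the finitely many $q$-theories). No further comparison is needed.
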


The dependence on the property to be tested is quite poor; the function $f$ is roughly a power tower whose height is the number of quantifier alternations in the property. The standard proof is by constructing a tree automaton that accepts a tree decomposition of bounded width if and only if the graph has the desired property. It is this result that we need.

\begin{proof}[Proof of Theorems~\ref{thm:rungless} and~\ref{thm:lineartime}]
Note that any realization of $\Hf$, if $\Hf$ has no rungs, may be expressed as the union of $K$ induced paths with no edges between them, plus at most $N$ other vertices, where $K$ is the number of urpaths in $\Hf$ and $N$ is the number of vertices in $\Hf$. Therefore realizations of $\Hf$ have bounded pathwidth; in particular, pathwidth at most $K+N$. This also means realizations of $\Hf$ have bounded treewidth. Additionally, the property of being $\Fc$-free, for a finite set of pathographs $\Fc$, is easily seen to be monadic second-order. To see if a particular $\Ff\in \Fc$ appears in a realization $G$ of $\Hf$, one only needs to search for the existence of some number of vertices and some number of sets of vertices, so that the chosen sets induce paths and the adjacencies between all of the vertices and sets of vertices matches the edges, spokes, and rungs of $\Ff$. The property of being a realization of $\Hf$ is also monadic second-order, by similar logic.

It then follows that we may construct tree automaton $M$ that takes as input a bounded-width tree decomposition of a graph and accepts if and only if the graph is an $\Fc$-free realization of $\Hf$. By standard results about tree automata, it is decidable to determine if there is any tree accepted by $M$, which proves Theorem~\ref{thm:rungless}. Additionally, Theorem~\ref{thm:lineartime} is proved by constructing a bounded-width tree decomposition of a realization of $G$ (which may easily be done in linear time since the vertex labels of $G$ were assumed to be compatible with $\Hf$), then feeding that tree decomposition to $M$, which works in linear time.
\end{proof}

Some readers may find this proof unsatisfying due to its reliance on Courcelle's theorem. In Appendix~\ref{app:explicit}, we give an explicit (conventional) automaton construction to the same end, including some small examples of how parts of this automaton look.

\subsection{Nuanced characterizations}
\label{ssec:misc}

Suppose in Problem~\ref{prob:realization} that $\Hf$ is not $\Fc$-free, i.e.\ it contains some $\Ff\in \Fc$. Then obviously no realizations of $\Hf$ can be $\Fc$-free. Theorem~\ref{thm:undecidable} shows indirectly that the converse does not hold: there are pathographs $\Hf$ that are $\Fc$-free but have no $\Fc$-free realizations. Here is a simple explicit example of this fact. Let $\Hf$ be the pathograph in Figure~\ref{fig:H}. It is tedious but not difficult to check that all realizations of $\Hf$ contain a theta, prism, or wheel; i.e.\  if one sets $\Fc=\Theta\cup \Pr\cup \W$ (from Theorem~\ref{thm:encode}), then the answer to Problem~\ref{prob:realization} is ``no'' despite the fact that $\Hf$ is $\Fc$-free. Since $\Hf$ has no rungs, this observation can be proven completely mechanically using the algorithm described in the previous subsection.

\begin{figure}[htbp!]
    \centering
    \begin{tikzpicture}
    \useasboundingbox (-1,-0.2) -- (1,2.2);
    \labvert{c}{0,0}{below};
    \labvert{a}{0,2}{above};
    \labvert{d}{1,1}{right};
    \labvert{b}{-1,1}{left};
    
    \urpath{a}{c};
    \edge{a}{d};
    \edge{a}{b};
    \edge{c}{d};
    \edge{c}{b};
    \spoke{d}{a}{c};
    \spoke{b}{a}{c};
    \end{tikzpicture}
    \captionof{figure}{The example pathograph $\Hf$.}
    \label{fig:H}
\end{figure}

Continuing with this example, suppose that we wish to characterize the theta- and wheel-free realizations of $\Hf$. Intuitively, this set should be fairly restricted, since all members contain a prism, but just how restricted is it? Since $\Hf$ has no rungs, the set of theta- and wheel-free realizations of $\Hf$ form a regular language in some sense. Using the ``determination string'' notation defined in Appendix~\ref{app:explicit}, one can verify that this language is given by the regular expression\footnote{We have suppressed the first index in each symbol (which is always 1 since $\Hf$ has only one urpath), e.g.\ $\{a,b\}$ should be interpreted as $(1, \{a,b\})$.}
\begin{equation}\label{eq:reglang}
    (\{a,b\}\varnothing^*\{c,d\})\mid (\{a,d\}\varnothing^*\{c,b\})
\end{equation}
This claim can be proven automatically by checking that the automaton $M$ constructed in Theorem~\ref{thm:automaton} recognizes precisely this language in the case $\Fc=\Theta\cup\W$. This regular language can then be reinterpreted in terms of pathographs: all $(\Theta\cup\W)$-free realizations of $\Hf$ are actually realizations of one of the pathographs in Figure~\ref{fig:thetawheelfree}, so they do not just contain prisms but in fact are prisms with at least two of the three paths having length 1.

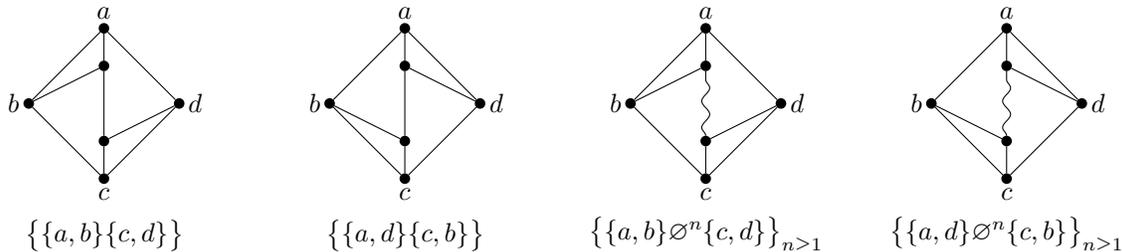
\begin{figure}[htbp!]
    \centering
    \begin{tikzpicture}
    \labvert{c}{0,0}{below};
    \labvert{a}{0,2}{above};
    \labvert{d}{1,1}{right};
    \labvert{b}{-1,1}{left};
    \vert{e}{0,0.5};
    \vert{f}{0,1.5};
    
    \edge{e}{f};
    \edge{e}{d};
    \edge{f}{b};
    \edge{e}{c};
    \edge{f}{a};
    \edge{a}{d};
    \edge{a}{b};
    \edge{c}{d};
    \edge{c}{b};

    \node at (0,-0.75) {$\big\{\{a,b\}\{c,d\}\big\}$};

    \tikzset{shift={(4,0)}};
    \labvert{c}{0,0}{below};
    \labvert{a}{0,2}{above};
    \labvert{d}{1,1}{right};
    \labvert{b}{-1,1}{left};
    \vert{e}{0,0.5};
    \vert{f}{0,1.5};
    
    \edge{e}{f};
    \edge{e}{b};
    \edge{f}{d};
    \edge{e}{c};
    \edge{f}{a};
    \edge{a}{d};
    \edge{a}{b};
    \edge{c}{d};
    \edge{c}{b};

    \node at (0,-0.75) {$\big\{\{a,d\}\{c,b\}\big\}$};

    \tikzset{shift={(4,0)}};
    \labvert{c}{0,0}{below};
    \labvert{a}{0,2}{above};
    \labvert{d}{1,1}{right};
    \labvert{b}{-1,1}{left};
    \vert{e}{0,0.5};
    \vert{f}{0,1.5};
    
    \urpath{e}{f};
    \edge{e}{d};
    \edge{f}{b};
    \edge{e}{c};
    \edge{f}{a};
    \edge{a}{d};
    \edge{a}{b};
    \edge{c}{d};
    \edge{c}{b};

    \node at (0,-0.75) {$\big\{\{a,b\}\varnothing^n\{c,d\}\big\}_{n\ge 1}$};

    \tikzset{shift={(4,0)}};
    \labvert{c}{0,0}{below};
    \labvert{a}{0,2}{above};
    \labvert{d}{1,1}{right};
    \labvert{b}{-1,1}{left};
    \vert{e}{0,0.5};
    \vert{f}{0,1.5};
    
    \urpath{e}{f};
    \edge{e}{b};
    \edge{f}{d};
    \edge{e}{c};
    \edge{f}{a};
    \edge{a}{d};
    \edge{a}{b};
    \edge{c}{d};
    \edge{c}{b};

    \node at (0,-0.75) {$\big\{\{a,d\}\varnothing^n\{c,b\}\big\}_{n\ge 1}$};
    \end{tikzpicture}
    \caption{The set of pathographs that describe the $(\Theta\cup\W)$-free realizations of the pathograph $\Hf$ from Figure~\ref{fig:H}. Beneath each pathograph, we have written the corresponding set of determination strings; the union of these sets is the language \eqref{eq:reglang}. The first and second pathographs are both isomorphic to $\Pr_1$, but they yield different determination strings, and similarly for the third and fourth pathographs, which are both isomorphic to $\Pr_2$.}
    \label{fig:thetawheelfree}
\end{figure}

Hence the techniques in this section cannot only decide Problem~\ref{prob:realization} if $\Hf$ has no rungs, but provide a characterization of all $\Fc$-free realizations of $\Hf$ in terms of a regular language.

\subsection{Applications to decomposition theorems}
\label{ssec:decomposition}

As mentioned in the introduction, the rungless case of Problem~\ref{prob:realization} occurs quite frequently when proving decomposition theorems in various ``attachment lemmas''. We give a specific example. In~\cite{uniquechord}, Claim 3 in the proof of Theorem 2.3 is the following:

\begin{lemma}
Suppose $G$ contains the Petersen graph plus a path between two different vertices of the Petersen graph. Then $G$ contains a triangle, an induced $C_4$, or a cycle with a unique chord.
\end{lemma}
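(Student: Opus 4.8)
The plan is to apply the decidable rungless case (Theorem~\ref{thm:rungless}) together with the encoding of Theorem~\ref{thm:encode}, treating this as a finite mechanical check. First I would set up the relevant pathograph $\Hf$: it has the ten vertices of the Petersen graph $P$ with all its edges, plus one urpath $u$ whose endpoints are a distinguished pair of vertices of $P$, and no spokes and no rungs (so that the internal vertices of the added path have no prescribed neighbors in $P$ and thus, in a realization, attach to $P$ only through the two endpoints). Because the statement says ``two different vertices'', and the Petersen graph is vertex-transitive with two orbits of vertex pairs (adjacent pairs and non-adjacent pairs), it suffices to consider the finitely many pathographs obtained by placing the urpath endpoints at a representative of each orbit — call these $\Hf_1$ (endpoints adjacent in $P$) and $\Hf_2$ (endpoints at distance $2$ in $P$); ``$G$ contains the Petersen graph plus a path between two different vertices'' is then exactly ``$G$ contains $\Hf_1$ or $\Hf_2$''. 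Note one subtlety: the definition of pathograph requires the endpoints of an urpath to be nonadjacent, so for $\Hf_1$ the added path must have length $\ge 2$; a length-$1$ path between adjacent Petersen vertices just creates a multi-edge and can be ignored, and a length-$1$ path between non-adjacent vertices is the case $U=\varnothing$, $m=0$ which we may fold into a separate trivial sub-case (adding a single chord to $P$), handled the same way.

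Next I would take $\Fc$ to be the finite set of pathographs that encodes the three forbidden structures: a triangle $K_3$, an induced $C_4$, and ``a cycle with a unique chord''. The first two are single conventional graphs. For ``cycle with a unique chord'' I would build, analogously to the Truemper configurations in Figure~\ref{fig:truemper}, a small finite family of pathographs: a chord $xy$ together with two urpaths (the two arcs of the cycle between $x$ and $y$) and, to capture short arcs, also the variants where one or both urpaths are replaced by a single edge, subject to the non-triangle, non-$C_4$ degenerate cases being already covered. Concretely this is the family $\{$ chord $+$ two urpaths, chord $+$ one edge $+$ one urpath $\}$ with the urpaths carrying no spokes or rungs between them (so the two arcs are internally anticomplete), exactly mirroring how $\Theta$ and $\Pr$ were constructed. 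Then $G$ contains a triangle, induced $C_4$, or cycle with a unique chord if and only if $G$ contains some member of this finite $\Fc$.

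With $\Hf\in\{\Hf_1,\Hf_2\}$ and $\Fc$ as above, the lemma is precisely the assertion that neither $\Hf_1$ nor $\Hf_2$ has an $\Fc$-free realization. Since $\Hf_1,\Hf_2$ have no rungs (indeed only a single urpath), Theorem~\ref{thm:rungless} applies and this is a decidable question; more sharply, by the ``nuanced characterization'' of Section~\ref{ssec:misc} the set of $\Fc$-free realizations of each $\Hf_i$ is a regular language, and one simply checks that the corresponding automaton $M$ of Theorem~\ref{thm:automaton} accepts the empty language. So the proof reduces to running this check, which can be done by hand: one traces what happens as the added path grows. If the path has length $\ge 2$ it forms, together with the edge or the short $P$-path between its endpoints, a cycle, and one must see that the endpoints of the urpath together with their neighborhood in $P$ force either a triangle, an induced $C_4$, or a unique-chord cycle along that cycle (using that in the Petersen graph every pair of vertices lies on a short cycle and that the girth is $5$); if the path has length $1$ it is just a chord added to $P$, and a single chord of the Petersen graph creates a cycle with a unique chord (again by girth $5$: the chord plus a shortest arc has no other chord).

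The main obstacle I expect is the last step: verifying, by a finite case analysis over the (few) orbits of vertex pairs of the Petersen graph and the short initial segments of the added path, that \emph{every} realization really does contain one of the three structures — i.e.\ that the automaton's language is empty. This is the ``tedious but mechanical'' part; it is finite because only the behavior near the two urpath endpoints matters (the interior of a long enough path just contributes a long induced cycle with a fixed pattern of attachments at its two ends), and the structure of the Petersen graph (vertex- and edge-transitive, girth $5$, diameter $2$) keeps the number of cases small. Everything else — the encoding, the reduction to the rungless case, the invocation of Theorem~\ref{thm:rungless} — is immediate from the machinery already developed.
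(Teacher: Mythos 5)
Your overall route is the same one the paper sketches (encode the hypothesis as a one-urpath, rungless pathograph realization problem, take $\Fc$ to encode triangle, induced $C_4$ and unique-chord cycles, and run the emptiness check of Theorem~\ref{thm:rungless}), but your encoding of the hypothesis has a genuine gap: you drop the spokes. The paper explicitly forms the pathograph from the Petersen graph by adding one urpath (or the adjacent-endpoint gadget) \emph{plus any number of spokes}, because the internal vertices of the connecting path may have neighbours in the Petersen graph, and in a realization the absence of a spoke forces those internal vertices to be nonadjacent to that vertex. With no spokes, your $\Hf_1,\Hf_2$ only capture connections whose interior is anticomplete to the Petersen graph, which is a strictly stronger hypothesis than the lemma's; and you cannot reduce to that case (for instance, a single new vertex with three or more neighbours in the Petersen graph gives a length-two path between two of its vertices, while no connection with anticomplete interior need exist). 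So the automaton-emptiness check you describe would verify only a weaker statement; to prove the lemma you must range over the finitely many spoke patterns, which is exactly the part of the attachment analysis the lemma is really about.

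A second, more technical problem is the requirement that urpath endpoints be nonadjacent. Your $\Hf_1$ (an urpath between two adjacent Petersen vertices) is not a legal pathograph, and noting that ``the added path must have length $\ge 2$'' does not repair it; the paper's fix is to add either a two-edge path (a new vertex adjacent to both endpoints) or a new vertex joined by an edge to one endpoint and by an urpath to the other. The same defect appears in your $\Fc$-encoding of ``cycle with a unique chord'': you hang two urpaths between the chord's endpoints, but those endpoints are adjacent, and replacing an arc by a single edge would just duplicate the chord; the correct finite family is the diamond-based one in Figure~\ref{fig:uniquechord}. With the spokes restored and these gadgets used, your argument becomes the paper's sketch (which itself only claims the lemma is provable ``in principle'' this way and refers to the original source for the actual proof).
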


This lemma can, in principle, be proven using the techniques in this section. In particular, such a graph $G$ is a realization of a pathograph formed by adding one urpath to the Petersen graph, or, if the two endpoints are adjacent, adding a two-edge path or a one-urpath~+~one-edge path, plus any number of spokes (but notably no rungs). The condition that $G$ does not contain a cycle with a unique chord is simply that $G$ does not have any of the pathographs in Figure~\ref{fig:uniquechord}. So then the lemma statement can be reformulated as a pathograph realization problem with no rungs, and solved (in principle) by Theorem~\ref{thm:rungless}.

\begin{figure}[htb!]
    \centering
    \begin{tikzpicture}
        \vert{1}{-1,0};
        \vert{2}{1,0};
        \vert{3}{0,1};
        \vert{4}{0,-1};
        \edge{1}{2};
        \edge{1}{3};
        \edge{2}{3};
        \edge{1}{4};
        \edge{2}{4};
    \end{tikzpicture}\qquad
    \begin{tikzpicture}
        \vert{1}{-1,0};
        \vert{2}{1,0};
        \vert{3}{0,1};
        \vert{4}{0,-1};
        \edge{1}{2};
        \edge{1}{3};
        \urpath{2}{3};
        \edge{1}{4};
        \edge{2}{4};
    \end{tikzpicture}\qquad
    \begin{tikzpicture}
        \vert{1}{-1,0};
        \vert{2}{1,0};
        \vert{3}{0,1};
        \vert{4}{0,-1};
        \edge{1}{2};
        \edge{1}{3};
        \urpath{2}{3};
        \edge{1}{4};
        \urpath{2}{4};
    \end{tikzpicture}
    \caption{The pathographs associated with having a cycle with a unique chord.}
    \label{fig:uniquechord}
\end{figure}
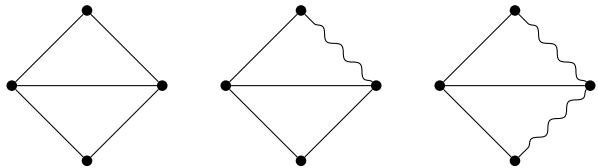

This example came about from analyzing how a path can attach onto a Petersen graph. It was amenable to the techniques in this section because the structure we were attaching onto is a conventional graph, i.e.\ a graph with no urpaths, so adding just one urpath can never introduce rungs. Another common step in proving decomposition theorems is to analyze how a single \textit{vertex} can attach onto a structure. These proof steps are always amenable to our techniques if the structure the vertex is being attached to is given by a pathograph with no rungs, since adding a vertex can only introduce edges and spokes.

That said, the finite automaton constructed in the proof of Theorem~\ref{thm:rungless}, even the explicit one provided in the appendix, will usually be very large, so this sort of analysis might not be practical without some further optimizations or insight about the specific problem being considered.

\section{Forbidden structures closed under adding adjacencies}\label{sec:closed}

We identify one more decidable case of Problem~\ref{prob:realization}. Suppose that $\Fc=\cl_\sim(\Fc)$, with $\cl_\sim$ defined in the proof of Theorem~\ref{thm:encode}. In other words, $\Fc$ is closed under adding edges, spokes, and rungs. Let us say that $\Fc$ is \textit{closed} for brevity. Such sets can arise naturally from $\Fc$ defining a set of forbidden subgraphs, minors, and/or topological minors. If $\Fc$ is closed, then the property of being $\Fc$-free is monotone under adding edges, spokes, and rungs: if $\Gf$ is not $\Fc$-free then no graph in $\cl_\sim(\Gf)$ is $\Fc$-free.

Say a realization $G$ of $\Hf$ is \textit{minimal} if removing any edge from $G$ makes it no longer a realization. We can solve Problem~\ref{prob:realization} if $\Fc$ is closed using the observation that, since being $\Fc$-free is monotone, there is an $\Fc$-free realization of $\Hf$ if and only if some minimal realization of $\Hf$ is $\Fc$-free.

\closeddecidable*

\begin{proof}
We prove this by induction on the number of rungs in $\Hf$. The case of no rungs is Theorem~\ref{thm:rungless}.

Let $\{u_1,u_2\}$ be a rung in $\Hf$. Let $S$ be the set of pathographs $\Hf'$ formed from $\Hf$ by the following process:
\begin{itemize}
    \item Say the urpath $u_1$ has endpoints $a_1$ and $b_1$. Delete $u_1$ and add exactly one of the following:
    \begin{itemize}
        \item A vertex $c_1$ and edges from $c_1$ to $a_1$ and $b_1$.
        \item A vertex $c_1$, edge from $c_1$ to $a_1$, and urpath $c_1^b$ from $c_1$ to $b_1$.
        \item A vertex $c_1$, edge from $c_1$ to $b_1$, and urpath $c_1^a$ from $c_1$ to $a_1$.
        \item A vertex $c_1$ and urpaths from $c_1$ to $a_1$ and $b_1$ called $c_1^a$ and $c_1^b$, respectively.
    \end{itemize}
    Do the same for $u_2$, obtaining new vertex $c_2$ and possibly new urpaths $c_2^a$ and/or $c_2^b$.
    \item Add an edge between $c_1$ and $c_2$.
    \item For each spoke $(v,u_1)$, add exactly one of the following:
    \begin{itemize}
        \item An edge between $v$ and $c_1$.
        \item A spoke between $v$ and $c_1^a$, if it exists.
        \item A spoke between $v$ and $c_1^b$, if it exists.
    \end{itemize}
    Do the same for $u_2$.
    \item For each rung $\{u_1,u'\}$, add exactly one of the following:
    \begin{itemize}
        \item A spoke between $c_1$ and $u'$.
        \item A rung between $u'$ and $c_1^a$, if it exists.
        \item A rung between $u'$ and $c_1^b$, if it exists.
    \end{itemize}
    Do the same for $u_2$.
\end{itemize}

Each pathograph in $S$ has strictly fewer rungs than $\Hf$: the rung $\{u_1,u_2\}$ disappeared, and all other rungs were replaced by either one rung or one spoke. Also, $S$ is clearly finite.

We claim that $G$ is a minimal realization of $\Hf$ if and only if $G$ is a minimal realization of some $\Hf'\in S$. Indeed, if $G$ is a non-minimal realization of $\Hf$, then some spoke or rung in $\Hf$ is represented by at least 2 edges in $G$. If this spoke/rung is not incident to $u_1$ or $u_2$, then $G$ is obviously not a minimal realization of any $\Hf'$. If this spoke/rung is incident to $u_1$ or $u_2$, then $G$ is still not a minimal realization of any $\Hf'$ since each such spoke/rung was replaced by exactly one edge, spoke, or rung in each $\Hf'$. This proves one direction, and the other direction (minimal realizations of each $\Hf$ are also minimal realizations of some $\Hf'$) is obvious.

By inductive hypothesis, Problem~\ref{prob:realization} is decidable with input $\Hf'$ and $\Fc$, for any $\Hf'\in S$. Then the answer to Problem~\ref{prob:realization} with input $\Hf$ and $\Fc$ is ``yes'' if and only if there is an $\Fc$-free realization of $\Hf'$ for some $\Hf'\in S$, since $\Fc$ is closed. This completes the inductive step and the proof.
\end{proof}

\section{Acknowledgments}

We thank Maria Chudnovsky for helpful discussions and Laurent Viennot for suggesting Problem~\ref{prob:restrict1}. The second author is supported by Projet ANR GODASse, Projet-ANR-24-CE48-4377.

\printbibliography

\appendix

\section{Explicit automaton construction}
\label{app:explicit}

In this appendix we give a second proof of Theorems~\ref{thm:rungless} and~\ref{thm:lineartime} using an explicit automaton construction, including some small examples. Our construction results in a fairly large automaton in general, though it can certainly be optimized in places.

Theorems~\ref{thm:rungless} and~\ref{thm:lineartime} will follow from the theorem below. For a pathograph $\Hf$ with no rungs and realization $G$ of $\Hf$ (with compatible vertex labels), with paths $P_i$ in $G$ replacing urpaths $u_i$ in $\Hf$, we say the \textit{external neighborhood string} of a path $P_i=a_iv_i^{(1)}\cdots v_i^{(k)}b_i$ is the string 
\[ s(P_i) = \sum_j \left(N(v_i^{(j)})\cap V(\Hf)\right) \in (2^{V(\Hf)})^* \]
over the alphabet $\Sigma_1\coloneqq 2^{V(\Hf)}$, where by sum we mean concatenation. We say the \textit{determination string} of $G$ is the string
\[ \sigma(G) = \sum_i \sum_j (i, s(P_i)_{(j)}) \in (\{1,\dots,K\}\times 2^{V(\Hf)})^* \]
over the alphabet $\Sigma_2\coloneqq \{1,\dots,K\}\times 2^{V(\Hf)}$, where $\Hf$ has $K$ urpaths. Here, by $s(P_i)_{(j)}$ we mean the $j$th symbol of $s(P_i)$, i.e.\ $N(v_i^{(j)})\cap V(\Hf)$. It is easy to see that $G$ may be uniquely reconstructed from its determination string.
\begin{theorem}\label{thm:automaton}
For any pathograph $\Hf$ with no rungs and finite set of pathographs $\Fc$, there is a (deterministic) finite automaton $M$ (depending on both $\Hf$ and $\Fc$) that takes in as input $\sigma\in (\Sigma_2)^*$ and accepts $\sigma$ if and only if $\sigma$ is the determination string of an $\Fc$-free realization of $\Hf$.
\end{theorem}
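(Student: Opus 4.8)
The plan is to present $M$ as a Boolean combination of finitely many automata: one testing that the input is a legitimate determination string of a realization of $\Hf$ at all, and, for each $\Ff\in\Fc$, one testing whether the encoded realization contains $\Ff$. Since regular languages are closed under intersection and complement, it suffices to prove each of these conditions is regular; $M$ then recognizes $L_0\cap\bigcap_{\Ff\in\Fc}\overline{L_\Ff}$, where $L_0$ is the set of determination strings of realizations of $\Hf$ and $L_\Ff$ is the set of $\sigma\in L_0$ for which the realization $G(\sigma)$ with determination string $\sigma$ contains $\Ff$.

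First I would dispatch $L_0$, which is routine. A determination string of a realization is a concatenation of $K$ nonempty blocks, the $i$th listing the internal vertices of the path $P_i$ replacing $u_i$; inside block $i$ the first symbol must contain $a_i$ and the last must contain $b_i$ (writing $\pi(u_i)=(a_i,b_i)$), no other symbol of block $i$ may contain $a_i$ or $b_i$ (inducedness of $P_i$), no symbol of block $i$ may contain a vertex $v$ with $(v,u_i)\notin S$ (a forbidden spoke), and every $v$ with $(v,u_i)\in S$ must occur in some symbol of block $i$ (a required spoke realized). A deterministic automaton checks all of this keeping only the current block index and, within a block, the set of already-seen required-spoke vertices, so $L_0$ is regular.

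The crux is that each $L_\Ff$ is regular, and here the structure of $G=G(\sigma)$ is essential. Because $\Hf$ has no rungs, $G$ is the disjoint union of the induced paths $P_1,\dots,P_K$ together with the fixed finite set $V(\Hf)$, and every internal vertex of a $P_i$ is adjacent only to vertices of $V(\Hf)$ and to its neighbours on $P_i$. Hence an induced path in $G$ — in particular the image of any urpath of $\Ff$ under a pathograph inclusion $\phi\colon\Ff\to G$ — visits the finite set $V(\Hf)$ at most $\abs{V(\Hf)}$ times, and between two consecutive such visits it is a contiguous subpath (a \emph{run}) of a single $P_i$. I would formalize this by defining a bounded notion of \emph{embedding scheme} $\mathcal{E}$: a partial injection from a subset $A\subseteq V(\Ff)$ into $V(\Hf)$; for each vertex of $\Ff$ outside $A$, a host path $P_i$ together with its forced $V(\Hf)$-neighbourhood (namely $\phi$ of its $\Ff$-neighbours in $A$, and no other image-vertex of $\Ff$); and for each urpath of $\Ff$ a \emph{route}, i.e.\ a bounded alternating sequence of runs and $V(\Hf)$-vertices from $\phi$ of one endpoint to $\phi$ of the other, each run tagged with its host path $P_i$, a minimum length, the fixed common $V(\Hf)$-neighbourhood every internal vertex of the run must have (again forced: within $V(\Hf)$ a run-vertex may only see images of $\Ff$-vertices adjacent to that urpath), and the identities of its two ends (a mapped $\Ff$-vertex, a $V(\Hf)$-vertex, or an endpoint of $P_i$). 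One then checks $\Ff\hookrightarrow G$ if and only if $\sigma$ \emph{realizes} one of the finitely many embedding schemes: the ``only if'' direction uses the run decomposition above together with the verification that the scheme correctly records every adjacency and nonadjacency inside the image and the internal vertex-disjointness of the urpath images.

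Finally, for each fixed $\mathcal{E}$, the set of $\sigma\in L_0$ realizing $\mathcal{E}$ is regular: an automaton scans $\sigma$ block by block, and since each block is a designated $P_i$ it only has to confirm that the boundedly many runs and special positions $\mathcal{E}$ assigns to that block actually occur there, with the prescribed neighbourhood symbols, correct relative order and separation, and sufficient length, carrying across blocks only the record of which runs of each urpath's route have been matched so far together with the bounded data needed to keep the scheme consistent. This is ordinary pattern matching with gaps and repetitions over the bounded alphabet $\Sigma_2$, hence regular, and $L_\Ff=\bigcup_{\mathcal{E}}L_{\mathcal{E}}$ is a finite union of regular languages. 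The main obstacle is exactly the normalization and bookkeeping of the previous paragraph: proving that every pathograph inclusion $\Ff\to G$ can be put into the shape of one of the finitely many embedding schemes, and laying out the schemes so that realizability is genuinely a finite-state condition — correctly handling the induced-subgraph (not merely subgraph) requirement, several image-paths sharing one block, and runs abutting the endpoints of the $P_i$. Everything afterwards is a routine automaton construction; Theorems~\ref{thm:rungless} and~\ref{thm:lineartime} then follow because emptiness of $M$ is decidable and the determination string of a given realization can be read off in linear time.
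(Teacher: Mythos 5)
Your proposal is correct and follows essentially the same route as the paper's proof: your ``embedding schemes'' are precisely the paper's partial pathograph inclusions into $H$ together with the finite data $D$ recording which missing subpaths/vertices are to be found in which $P_i$, in what order and with what adjacencies, and both arguments hinge on the same key observation that the absence of rungs forces every connected new piece of the image to lie inside a single $P_i$, so that per-scheme recognition is finite-state and one concludes by closure of regular languages under Boolean operations. The only cosmetic difference is that you intersect with the language of well-formed determination strings and complement each $L_\Ff$, whereas the paper unions the ill-formed strings with the languages $L_\phi$ and complements once at the end --- equivalent by De Morgan.
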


First we show that this implies Theorems~\ref{thm:rungless} and~\ref{thm:lineartime}.

\begin{proof}[Proof of Theorem~\ref{thm:rungless} assuming Theorem~\ref{thm:automaton}]
Let $L_M\subseteq (\Sigma_2)^*$ be the regular language given by the strings accepted by $M$. It is a standard result that determining if a regular language is empty is decidable; the answer to Problem~\ref{prob:realization} is ``no'' if and only if $L_M=\varnothing$.
\end{proof}

\begin{proof}[Proof of Theorem~\ref{thm:lineartime} assuming Theorem~\ref{thm:automaton}]
Given $G$, one can obviously find the determination string $\sigma(G)$ in linear time (assuming the vertex labels of $G$ are compatible with those of $\Hf$). Then we simply feed this string into $M$, which operates in linear time; $G$ is $\Fc$-free if and only if $M$ accepts $\sigma(G)$.
\end{proof}

\subsection{Partial pathograph inclusions}

To prove Theorem~\ref{thm:automaton}, we first introduce the notion of a \textit{partial pathograph inclusion} between pathographs $\Hf$ and $\Gf$. We say that $\phi=(\phi_V, \phi_P)$ is a partial pathograph inclusion if:
\begin{itemize}
    \item $\phi_V:V(\Hf)\to V(\Gf)\cup\{\ud\}$ is injective except that $\phi_V$ may send multiple vertices of $\Hf$ to the special $\ud$ element.
    \item $\phi_U:U(\Hf)\to 2^{V(\Gf)\cup P(\Gf)}\cup\{\ud\}$ has the following properties:
    \begin{itemize}
        \item If $\phi_U(u)\ne \ud$, then the elements of $\phi_U(u)$ are disjoint and nonadjacent paths (possibly of length 1, i.e.\ vertices) in $\Gf$.
        \item Suppose $u\in U(\Hf)$ has endpoints $v_1,v_2\in V(\Hf)$. Then $\phi_V(v_1)$ and $\phi_V(v_2)$ are endpoints of some elements of $\phi_U(u)$, or are $\ud$. If $\phi_U(u)$ is just one path and $\phi_V(v_1)$ and $\phi_V(v_1)$ are not $\ud$, we say that $\phi_U(u)$ is a \textit{completed path}.
        \item If $\phi_U(u)=\ud$ then both endpoints $v_1$ and $v_2$ of the urpath $u$ must have $\phi_V(v_1)=\phi_V(v_2)=\ud$ as well.
    \end{itemize}
    \item Let $a,b\in V(\Hf)\cup U(\Hf)$. Then $a$ and $b$ are adjacent in $\Hf$ if and only if $\phi(a)$ and $\phi(b)$ are adjacent in $\Gf$, with the following exceptions:
    \begin{itemize}
        \item if $\phi(a)$ or $\phi(b)$ are $\ud$, or
        \item if $a$ (or $b$) is a path and $\phi(a)$ (or $\phi(b)$) is not a completed path.
    \end{itemize}
\end{itemize}
We write $\phi:\Hf\to\Gf\cup\{\ud\}$ to denote that $\phi$ is a partial pathograph inclusion. We say that $\phi$ \textit{extends} to a pathograph inclusion $\psi:\Hf\to\Gf$ if $\psi_V(v)=\phi_V(v)$ whenever $\phi_V(v)$ is not $\ud$ and $\psi_U(u)\supseteq \bigcup_{q\in \phi_U(u)}q$ whenever $\phi_U(u)$ is not $\ud$.

The bulk of the work in proving Theorem~\ref{thm:automaton} is relegated to the following lemma:
\begin{lemma}
Let $\Hf$ be a pathograph without rungs and $\Ff$ a pathograph. Let $H$ be the graph formed by deleting all urpaths from $\Hf$ and $\eta:H\to \Hf$ the associated pathograph inclusion map. Suppose $\phi:\Ff\to H\cup\{\ud\}$ is a partial pathograph inclusion. There is a finite automaton $M_\phi$ that takes as input $\sigma\in (\Sigma_2)^*$ and:
\begin{itemize}
    \item Suppose $\sigma$ is the determination string of a realization $G$ of $\Hf$, and let $\xi:\Hf\to G$ be the associated pathograph inclusion map (see Proposition~\ref{prop:basicfacts}(2)). Then $\sigma$ is accepted if and only if $\phi$ extends to a pathograph inclusion $\phi':\Ff\to H$ such that, defining $\psi=\xi\circ\eta\circ\phi':\Ff\to G$, we have $\im(\psi_V)\setminus \im(\phi_V)\subseteq V(G)\setminus V(H)$ and $\psi_U(u)\setminus \bigcup_{q\in \phi_U(u)}q\subseteq V(G)\setminus V(H)$.
    \item If $\sigma$ is not the determination string of a realization of $\Hf$, then $\sigma$ may be either accepted or rejected.
\end{itemize}
\end{lemma}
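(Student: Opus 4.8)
The plan is to build $M_\phi$ first as a nondeterministic automaton and then determinize it; the whole construction is powered by the observation that, since $\Hf$ has no rungs, in every realization $G$ the paths $P_1,\dots,P_K$ replacing the urpaths $u_1,\dots,u_K$ of $\Hf$ are pairwise anticomplete, and an interior vertex of $P_i$ has no neighbours outside $P_i$ except the vertices $w\in V(H)$ with $(w,u_i)\in S(\Hf)$. Thus an extension of $\phi$ of the required kind -- an embedding of $\Ff$ into $G$ agreeing with $\phi$ on its domain and sending everything else into $V(G)\setminus V(H)$ -- can be recognised block by block from the determination string, which is the concatenation $b_1\cdots b_K$ of $K$ blocks, $b_i$ listing the interior vertices of $P_i$ together with their neighbourhoods in $V(\Hf)$.

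First I would fix the data read off from $\phi$: the set $W^\ast\subseteq V(H)$ of vertices actually used by $\phi$ (images of its non-$\ud$ vertices, plus all vertices on the paths in the sets $\phi_U(u)$), the set $Z\subseteq V(\Ff)$ of vertices mapped to $\ud$, and for each urpath $u$ of $\Ff$ the list of $H$-components of $\phi_U(u)$ and which endpoints of $u$ remain undetermined. Then I would observe that any admissible extension is captured, up to finitely many choices, by an \emph{embedding scheme}: an assignment to each index $i$ of a bounded ordered list of ``items'' to be hosted by $P_i$, where an item is either a vertex of $Z$ or a segment of $P_i$ used to splice two consecutive $H$-components of some $\phi_U(u)$ (or to attach a $\ud$-endpoint of $u$, or to carry all of $\psi_U(u)$ when $\phi_U(u)=\varnothing$), together with a flag on each consecutive pair of items saying whether they must occupy consecutive vertices of $P_i$ or be separated by at least one vertex not in the image. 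As $\Ff$ is fixed, each $P_i$ can host at most $\abs{V(\Ff)}$ vertices and $\abs{U(\Ff)}$ splice-segments, so there are finitely many schemes. The automaton opens by nondeterministically guessing a scheme and verifying, by a computation that does not look at the input, that it is \emph{coherent}: every vertex of $Z$ occurs exactly once; for each urpath $u$, the items assigned to it, interleaved with the $H$-components of $\phi_U(u)$ in the prescribed order, spell out a single admissible path shape with at least three vertices whose internal adjacencies match $\Ff$; no two items placed on distinct $P_i$'s correspond to adjacent elements of $\Ff$ (their hosts are non-adjacent in $G$ since there are no rungs); and all remaining $\Ff$-adjacencies and non-adjacencies among items and between items and $\im\phi_V$ are realisable -- a finite condition determined by the scheme, the spokes of $\Hf$, and $W^\ast$.

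With a coherent scheme frozen, the automaton consumes $b_1,\dots,b_K$ in order, its state recording only the current block index $i$, the position reached in the bounded pattern prescribed for $P_i$ (which landmark item we are at, and whether we are in a ``free space'' loop between landmarks), and -- optionally, since the lemma permits arbitrary behaviour on strings that are not determination strings of realizations -- which spokes at $u_i$ have already been witnessed in $b_i$. On each symbol $(i,X)$, where $X$ is the $V(\Hf)$-neighbourhood of the current interior vertex $v$, it checks that $X\cap W^\ast$ equals exactly the set of $\phi$-images of the $\Ff$-neighbours of the item (if any) that $v$ hosts, that $a_i$ and $b_i$ appear only at the first and last symbols of the block respectively, and that any symbol hosting no item meets the realizability constraints for a free vertex; it advances past a landmark when $X$ matches it and loops otherwise, and it accepts iff all $K$ blocks are consumed in exact agreement with the scheme. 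A routine induction shows that, over all coherent schemes, the strings accepted are precisely those determination strings of realizations $G$ of $\Hf$ for which $\phi$ extends as required, with unconstrained behaviour on every other string -- exactly what the lemma asks; the subset construction then yields a deterministic $M_\phi$. The step I expect to be the real work is the ``coherence'' analysis of embedding schemes: an urpath of $\Ff$ may have an image in $G$ that is a single induced path wandering through several of the $P_i$ and through $\phi$-prescribed vertices of $H$, and one must check that gluing the per-block guesses back together produces a bona fide pathograph inclusion. The no-rungs hypothesis is exactly what makes this finitary, since it forbids any adjacency between the interiors of distinct $P_i$ and hence both bounds the number of schemes and keeps the per-block state finite.
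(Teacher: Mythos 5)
Your proposal is correct and follows essentially the same route as the paper: your ``embedding schemes'' are the paper's finitely many data $D$ (which missing vertices and path segments are sought, in which $P_i$, in what order, and with what adjacency constraints), justified by the same no-rungs observation that each new segment must lie inside a single $P_i$, followed by a bounded-state block-by-block verification and a union over the finitely many schemes, with the lemma's freedom on ill-formed strings exploited just as in the paper. The only differences are cosmetic (guessing the scheme inside one NFA versus taking a union of per-datum automata, and constraining only $X\cap W^\ast$ versus unioning over exact adjacency patterns to unused vertices of $H$).
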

Another way to interpret the first bullet is as follows. We may view $\phi:\Ff\to H\cup\{\ud\}$ as a partial pathograph inclusion $\tilde{\phi}:\Ff\to G\cup\{\ud\}$ by composing with $\eta$ and $\xi$. This automaton $M_\phi$ must accept $\sigma(G)$ if and only if there is a way to extend $\tilde{\phi}$ to a pathograph inclusion $\Ff\to G$ using only new vertices in $G$ that were not present in $H$. For the time being, we do not care about the behavior of $M_\phi$ on strings that are not determination strings. We will combine these machines $M_\phi$ to construct the machine $M$ described in Theorem~\ref{thm:automaton}.

\begin{proof}
There are finitely many ``essentially different'' ways to extend $\phi$ to such a pathograph inclusion $\psi$. We will demonstrate this by example before making it precise. Suppose $u$ is an urpath of $\Ff$ with endpoints $v_1',v_2'\in V(\Ff)$ and $\phi_U(u)=\{q_1,q_2,q_3\}$ with $v_1,a_1$ the endpoints of $q_1$ (a path in $G$), $v_2,a_2$ the endpoints of $q_2$, $a_3,b_3$ the endpoints of $q_3$, and $\phi_V(v_i')=v_i$ for $i=1,2$. If $\phi$ is to extend to a pathograph inclusion $\psi:\Ff\to G$, it must be that $\psi_U(u)$ consists of one of the following paths in $G$:
\begin{itemize}
    \item The concatenation of $q_1$, then a path from $a_1$ to $a_3$, then $q_3$, then a path from $b_3$ to $a_2$, then $q_2$.
    \item The concatenation of $q_1$, then a path from $a_1$ to $b_3$, then $q_3$ in reverse order, then a path from $a_3$ to $a_2$, then $q_2$.
\end{itemize}

See Figure~\ref{fig:partialpath} for an illustration. In the first case, we say we are ``searching for'' an $a_1$-$a_3$ path $p_1$ and a $b_3$-$a_2$ path $p_2$ in $G\setminus H$; in the second case, we are searching for an $a_1$-$b_3$ path and an $a_3$-$a_2$ path.

\begin{figure}[htbp!]
    \centering
    \begin{tikzpicture}
        \alabvert{v1}{0,0}{v_1}{below};
        \alabvert{a1}{1,0}{a_1}{below};
        \alabvert{a3}{2,0}{a_3}{below};
        \alabvert{b3}{3,0}{b_3}{below};
        \alabvert{a2}{4,0}{a_2}{below};
        \alabvert{v2}{5,0}{v_2}{below};
        \labdotedge{v1}{a1}{q_1}{above};
        \labdotedge{v2}{a2}{q_2}{above};
        \labdotedge{a3}{b3}{q_3}{above};
        \node[left] (phiu) at (-1,0) {$\phi_U(u)\subset H$};

        \alabvert{v1F}{0,2}{v_1'}{below};
        \alabvert{v2F}{5,2}{v_2'}{below};
        \laburpath{v1F}{v2F}{u}{above};
        \node[left] (u) at (-1,2) {$u\subset \Ff$};

        \alabvert{v11}{0,-2}{v_1}{below};
        \alabvert{a11}{1,-2}{a_1}{below};
        \alabvert{a31}{2,-2}{a_3}{below};
        \alabvert{b31}{3,-2}{b_3}{below};
        \alabvert{a21}{4,-2}{a_2}{below};
        \alabvert{v21}{5,-2}{v_2}{below};
        \labdotedge{v11}{a11}{q_1}{above};
        \labdotedge{v21}{a21}{q_2}{above};
        \labdotedge{a31}{b31}{q_3}{above};
        \labdotedge{a11}{a31}{p_1}{above};
        \labdotedge{b31}{a21}{p_2}{above};

        \alabvert{v12}{0,-4}{v_1}{below};
        \alabvert{a12}{1,-4}{a_1}{below};
        \alabvert{a32}{2,-4}{a_3}{below};
        \alabvert{b32}{3,-4}{b_3}{below};
        \alabvert{a22}{4,-4}{a_2}{below};
        \alabvert{v22}{5,-4}{v_2}{below};
        \labdotedge{v12}{a12}{q_1}{above};
        \labdotedge{v22}{a22}{q_2}{above};
        \labdotedge{a32}{b32}{q_3}{above};
        \draw[dashed] (a12) to[bend right=60] node[midway, below] {$p_1$} (b32);
        \draw[dashed] (a32) to[bend left=60] node[midway, above] {$p_2$} (a22);

        \node[left] (psiu) at (-1,-3) {$\psi_U(u)\subset G$};

        \draw[thick, decorate, decoration={brace, mirror, amplitude=5pt}] (-0.75, -1.75) -- (-0.75, -4.25);
    \end{tikzpicture}
    \caption{The two ways of extending $\phi$ to a pathograph inclusion $\psi$, from the perspective of the image of the urpath $u$. Here, dashed lines in $G$ represent paths.}
    \label{fig:partialpath}
\end{figure}
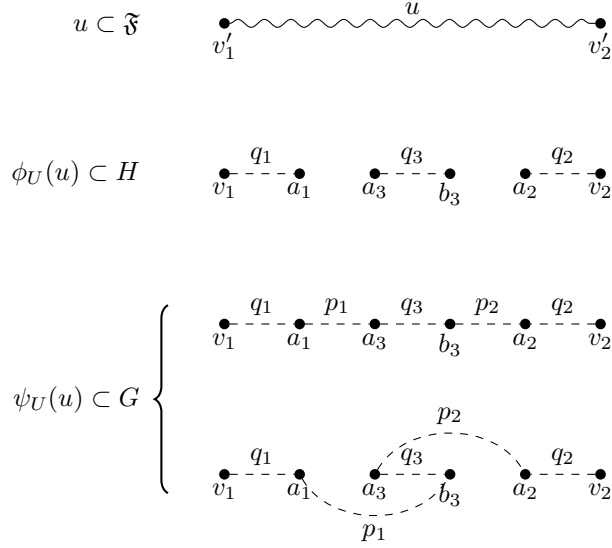

Let us just focus on the first case. We must additionally have that $p_1$ is not adjacent to $v_1$, $q_1$, $q_3$, $b_3$, $p_2$, $a_2$, $q_2$, or $v_2$. Likewise, $p_2$ must not be adjacent to $v_1$, $q_1$, $a_1$, $p_1$, $a_3$, $q_3$, $q_2$, or $v_2$. There are additional adjacency or nonadjacency constraints on $p_1$ and $p_2$ related to the rest of $\im(\phi)$; for example, $p_1$ and $p_2$ must be nonadjacent to $\phi_V(v)$ if $v$ is a vertex not adjacent to $u$ in $\Ff$ and $\phi_V(v)\ne\ud$.

The two different cases above represent ``essentially different'' ways of extending $\phi$ to a pathograph inclusion, but this is not the full story. Note that $V(G)\setminus V(H)$ is a disjoint union of paths, so $p_1$ and $p_2$ must be found within these paths. Even within the two cases identified, there are still ``essentially different'' ways of finding the necessary paths, depending on where in $G$ they are found. For example, if $\Hf$ has two urpaths $w_1$ and $w_2$, corresponding to paths $P_1$ and $P_2$ in $G$, then possibly $p_1$ and $p_2$ could both be found in $P_1$, or $p_1$ could be in $P_1$ and $p_2$ could be in $P_2$, or so on.

\textit{Crucially}, $p_1$ cannot be found by including vertices from both $P_1$ and $P_2$ since $\Hf$ has no rungs (so the urpaths $w_1$ and $w_2$ are not adjacent, and $P_1$ and $P_2$ are not adjacent in $G$). There is further distinction within some of these subcases depending on order that $p_1$ and $p_2$ appear; for instance, if both $p_1$ and $p_2$ are to be found in $P_1$, then either $p_1$ is closer to the left endpoint of $P_1$, or $p_2$ is. They cannot be ``interlaced'' in any fashion because $P_1$ is an induced path so all connected induced subgraphs of $P_1$ are given by ``contiguous'' subsets of $P_1$. ++Each of these possible locations to find $p_1$ and $p_2$ will count as an ``essentially different'' way of extending $\phi$.

We can repeat the same analysis on all urpaths $u$ of $\Ff$ where $\phi(u)$ is not a completed path, and a similar process on all vertices $v$ of $\Ff$ where $\phi(v)=\ud$. Each possibility consists of:
\begin{itemize}
    \item the description of all paths and vertices in $G\setminus H$ that we are searching for,
    \item all adjacency/nonadjacency constraints on these paths/vertices,
    \item which path of $G$ we find these paths/vertices in (recalling the crucial observation above), and
    \item the order within each path that these paths/vertices are found.
\end{itemize}
This complete set of data is one ``essentially different'' way to extend $\phi$ to a pathograph inclusion, and it is easy to see there are only finitely many such possibilities.

\vspace{1em}

Let $D$ be the data associated to one such possibility; specifically, $D$ consists of the following data:
\begin{itemize}
    \item A (finite) set of pairs $p_1\coloneqq(a_1,b_1),\dots,p_\alpha\coloneqq(a_\alpha,b_\alpha)$, where each $a_i$ and $b_i$ is a vertex in $H$ (i.e.\ a vertex in $G$ outside of the paths that replaced the urpaths in $\Hf$), so that for each $i$ we need to find an $a_i$-$b_i$ path, and so that the $a_i$ end of the path is found before the $b_i$ end of the path.
    \item A (finite) set of symbols $x_1,\dots,x_\beta$. For each $j$, $x_j$ will represent $\psi(v_j)$ where $v_j\in V(\Ff)$ has $\phi(v_j)=\ud$; i.e.\ the $x_j$ represent the vertices we must find in $G\setminus H$ to extend $\phi_V$.
    \item A subset of $\binom{V(H)\cup \{p_i\}_i\cup \{x_j\}_j}{2}$; the presence of a pair $\{y,z\}$ means that $y$ and $z$ must be adjacent in $G$, and the absence of such a pair means that $y$ and $z$ must be nonadjacent in $G$.
    \item A function $f:\{p_i\}_i\cup \{x_j\}_j\to \{1,\dots,K\}$ where $\{w_k\}_{k=1}^K$ are the urpaths of $\Hf$. Suppose that $P_k$ is the path in $G$ replacing urpath $w_k$ of $\Hf$, for each $k$. The meaning of $f(y)$ is that element $y$ (either a path or vertex) must be found in path $P_{f(y)}$ in $G$.
    \item For each $k$, an order $<_k$ on $f^{-1}(k)$, giving the order within $P_k$ that we find each of the necessary paths $p_i$ and vertices $x_j$.
\end{itemize}

Then we claim there is a finite automaton $M_D$ taking strings $\sigma\in (\Sigma_2)^*$ and, if $\sigma$ is the determination string of $G$, accepts $\sigma$ if and only if $\phi$ extends to a pathograph inclusion in $G$ in exactly the manner prescribed by the data $D$. We will construct $M_D$ as a nondeterministic automaton.

For each $y\in \{p_i\}_i\cup \{x_j\}_j$, let $A_y\subseteq V(H)\cup \{p_i\}_i\cup \{x_j\}_j$ be the set of objects that $y$ must be adjacent to. Note that we may assume $A_y$ contains at most two elements of $\{p_i\}_i\cup \{x_j\}_j$; otherwise, it is impossible for $\phi$ to be extended to a pathograph inclusion in the prescribed way since the elements of $\{p_i\}_i\cup \{x_j\}_j$ adjacent to $y$ must be found in the same path $P_{f(y)}$ as $y$, and there are only two locations such an element could appear (either immediately before $y$ or immediately after, in the sense of $<_{f(y)}$). The automaton $M_D$ can simply reject all inputs in the case three or more such elements appear in $A_y$. Call the \textit{type} of $y$ the number of elements of $A_y$ that are in $\{p_i\}_i\cup \{x_j\}_j$ (i.e.\ each $y$ may be type 0, type 1, or type 2).

Call a type 1 object $y$ \textit{type 1a} if the element of $A_y\cap (\{p_i\}_i\cup \{x_j\}_j)$ must come before $y$ (as indicated by $<_{f(y)}$) and \textit{type 1b} otherwise. For $y$ that are not the minimum under $<_{f(y)}$, let $\ell_y$ be the element of $f^{-1}(f(y))$ immediately before $y$ under $<_{f(y)}$; if $y$ is not the maximum under $<_{f(y)}$, let $r_y$ be the element immediately after $y$. Let $A_y'=A_y\setminus (\{p_i\}_i\cup \{x_j\}_j)$ be the elements of $A_y$ that are not $\ell_y$ or $r_y$.

Now we construct $M_D$. The states of $M_D$ are as follows:
\begin{itemize}
    \item For each $k\in \{1,\dots,K\}$, a state $s_k^{\text{start}}$. The start state is $s_1^{\text{start}}$.
    \item For each $p_i$ and $X\subseteq A_{p_i}'$, a state $s_{p_i}^X$.
    \item For each $y\in \{p_i\}_i\cup \{x_j\}_j$ that is type 0 or type 1a, a state $s_y^{\text{right}}$ and a state $s_y^{\text{after}}$.
    \item For each $y\in \{p_i\}_i\cup \{x_j\}_j$ that is type 1b or type 2, a state $s_y^{\text{right}}$.
    \item A state $s_{\text{accept}}$, which is the only accepting state.
\end{itemize}
The transitions of $M_D$ are as follows:
\begin{itemize}
    \item For each $k$ and $\lambda\in\Sigma_1$, there is a transition from $s_k^{\text{start}}$ to itself with label $(k, \lambda)$ (which is in $\Sigma_2$).
    \item For each $\lambda\in\Sigma_1$, there is a transition from $s_{\text{accept}}$ to itself with label $(K, \lambda)$.
    \item For each $y$ of type 0 or type 1a and $\lambda\in \Sigma_1$, there is a transition from $s_y^{\text{after}}$ to itself with label $(f(y), \lambda)$.
    \item For each $k$, if $f^{-1}(k)$ is empty, there is an $\varepsilon$-transition from $s_k^{\text{start}}$ to $s_{k+1}^{\text{start}}$ unless $k=K$, in which case there is an $\varepsilon$-transition from $s_k^{\text{start}}$ to $s_{\text{accept}}$.
    \item For each $k$, if $f^{-1}(k)$ is nonempty, let $y_k\in \{p_i\}_i\cup \{x_j\}_j$ be the smallest element of $f^{-1}(k)$ under $<_k$ and $y_k'$ be the largest element. Necessarily $y_k$ is type 0 or type 1b and $y_k'$ is type 0 or type 1a. Then:
    \begin{itemize}
        \item If $y_k=p_i$, then for each $X\subseteq A_{y_k}'$, there is a transition from $s_k^{\text{start}}$ to $s_{y_k}^X$ with label $(f(y_k), X\cup \{a_i\})$ (where $a_i$ is the left endpoint of $p_i=(a_i,b_i)$). Additionally, there is a transition from $s_k^{\text{start}}$ to $s_{y_k}^{\text{right}}$ with label $(f(y_k), A_{y_k}'\cup \{a_i, b_i\})$ ($b_i$ is the right endpoint of $p_i$)
        \item If $y_k=x_j$, then there is a transition from $s_k^{\text{start}}$ to $s_{y_k}^{\text{right}}$ with label $(f(y_k), A_{y_k}')$.
        \item If $k<K$, there is an $\varepsilon$-transition from $s_{y_k'}^{\text{right}}$ to $s_{k+1}^{\text{start}}$ and an $\varepsilon$-transition from $s_{y_k'}^{\text{after}}$ to $s_{k+1}^{\text{start}}$; if $k=K$, these transitions instead go to $s_{\text{accept}}$.
    \end{itemize}
    \item For each $p_i$ and $X\subseteq A_{p_i}'$, if $\ell_{p_i}$ exists:
    \begin{itemize}
        \item If $\ell_{p_i}$ is type 0 or type 1a (equivalently, $p_i$ is type 0 or type 1b), there is a transition from $s_{\ell_{p_i}}^{\text{after}}$ to $s_{p_i}^X$ with label $(f(p_i), X\cup\{a_i\})$.
        \item If $\ell_{p_i}$ is type 1b or type 2 (equivalently, $p_i$ is type 1a or type 2), there is a transition from $s_{\ell_{p_i}}^{\text{right}}$ to $s_{p_i}^X$ with label $(f(p_i), X\cup\{a_i\})$.
    \end{itemize}
    \item For each $p_i$ and $X,Y\subseteq A_{p_i}'$, there is a transition from $s_{p_i}^X$ to $s_{p_i}^{X\cup Y}$ with label $(f(p_i), Y)$.
    \item For each $p_i$ and $X,Y\subseteq A_{p_i}'$, if $X\cup Y=A_{p_i}'$, there is a transition from $s_{p_i}^X$ to $s_{p_i}^{\text{right}}$ with label $(f(p_i), Y\cup \{b_i\})$.
    \item For each $y$ of type 0 or type 1a and $\lambda\in\Sigma_1$, there is a transition from $s_y^{\text{right}}$ to $s_y^{\text{after}}$ with label $(f(y), \lambda)$.
    \item For each $x_j$, if $\ell_{x_j}$ exists:
    \begin{itemize}
        \item If $\ell_{x_j}$ is type 0 or type 1a, there is a transition from $s_{\ell_{x_j}}^{\text{right}}$ to $s_{x_j}^{\text{right}}$ with label $(f(x_j), A_{x_j}')$.
        \item If $\ell_{x_j}$ is type 1b or type 2, there is a transition from $s_{\ell_{x_j}}^{\text{after}}$ to $s_{x_j}^{\text{right}}$ with label $(f(x_j), A_{x_j}')$.
    \end{itemize}
\end{itemize}

Note that if we are in state $s$ and read symbol $\kappa\in\Sigma_2$, but there is no transition from $s$ with label $\kappa$, then the string is rejected. Additionally, note that $M_D$ is nondeterministic, so there may be multiple legal transitions that could be taken in any given step.

One can check that if $\sigma\in\Sigma_2^*$ is the determination string of $G$, then $M_D$ accepts $\sigma$ if and only if $\phi$ extends to a pathograph inclusion $\Ff\to G$ in exactly the manner prescribed by $D$. Checking this is exceptionally tedious, so we just informally describe the main subclaims to verify:
\begin{itemize}
    \item If the current state is $s_{p_i}^X$, then we are currently constructing the path $p_i$ and so far we have ``accumulated'' the adjacencies $X$ out of the required set $A_{p_i}'$.
    \item If the current state is $s_y^{\text{right}}$, then we have just successfully completed constructing $y$ (either a path or a vertex), unless $y$ is type 1b or type 2, in which case we have constructed everything except for the adjacency to $r_y$, which must be constructed using the very next symbol.
    \item If the current state is $s_y^{\text{after}}$, then we have constructed $y$ and read at least one more symbol, so the next symbol we read represents a vertex that is not adjacent to $y$.
    \item If the current state is $s_k^{\text{start}}$, we are ready to read a symbol of the form $(k, \lambda)$, i.e.\ we are revealing the path $P_k$ in $G$ but have not yet started constructing any of the objects in $f^{-1}(k)$.
    \item If the current state is $s_{\text{accept}}$, we have successfully constructed every object we are looking for.
\end{itemize}

Now we use $M_D$ to construct the desired automaton $M_\phi$. To do this, let $L_D$ be the regular language given by the strings accepted by $M_D$. Then construct $M_\phi$ as an automaton to recognize the regular language
\[ \bigcup_{\text{data }D} L_D. \]
This language is regular since it is a finite union of regular languages.
\end{proof}

Before completing the proof of Theorem~\ref{thm:automaton}, we give an example of the construction in the proof above. Let $\Hf$ be the pathograph in Figure~\ref{fig:H}.

In this case, $H$ is the four-cycle with vertices labeled $a,b,c,d$ in order. We will reveal the $a$-$c$ path in realizations $G$ from the $a$ end to the $c$ end (not including the vertices $a$ and $c$, of course). Let $\Ff$ the pathograph in Figure~\ref{fig:labeledwheel}.

\begin{figure}
    \centering
    \begin{tikzpicture}
    \useasboundingbox (0,-0.2) -- (2,2.2);
    \labvert{X}{0,1}{left};
    \labvert{Z}{1,1}{above};
    \labvert{Y}{2,1}{right};
    \draw[decorate,decoration={snake,amplitude=1.5,segment length=10, post length=0,pre length=0}] (X) to[bend right=90] node[midway, below=0.1] {$u_2$} (Y);
    \draw[decorate,decoration={snake,amplitude=1.5,segment length=10, post length=0,pre length=0}] (X) to[bend left=90] node[midway, above=0.1] {$u_1$} (Y);
    \edge{X}{Z};
    \edge{Y}{Z};
    \manspoke{Z}{1,0.25};
    \end{tikzpicture}
    \captionof{figure}{The example pathograph $\Ff$.}
    \label{fig:labeledwheel}
\end{figure}

Let $\phi:\Ff\to H$ be given by
\begin{align*}
    \phi_V(X) &= b \\
    \phi_V(Y) &= d \\
    \phi_V(Z) &= \ud \\
    \phi_U(u_1) &= \{bad\} \\
    \phi_U(u_2) &= \{b,d\}.
\end{align*}
Suppose the data $D$ is the following:
\begin{itemize}
    \item The symbol $p_1=(b,d)$, representing $\psi_U(u_2)$. In this case, we have determined that we will find the $b$ end of the path before the $d$ end.
    \item The symbol $x_1$, representing $\psi_U(Z)$.
    \item The set
    \begin{align*}
        \big\{&\{a,b\}, \{b,c\}, \{c,d\}, \{a,d\}, \\
        &\{p_1,x_1\}, \{x_1,b\}, \{x_1,c\}, \{x_1,d\}\big\}
    \end{align*}
    representing adjacencies. Note that we have decided that $x_1$ will be adjacent to $c$ and $p_1$ will not be adjacent to $c$.
    \item The function $f:\{p_1,x_1\}\to\{1\}$ that just maps everything to 1 (there is only one urpath in $\Hf$).
    \item The order $<_1$ with $p_1<_1x_1$, indicating that we will find $p_1$ before $x_1$.
\end{itemize}
We have that $p_1$ is type 1b and $x_1$ is type 1a. Then $M_D$ is the automaton shown in Figure~\ref{fig:dataauto}.
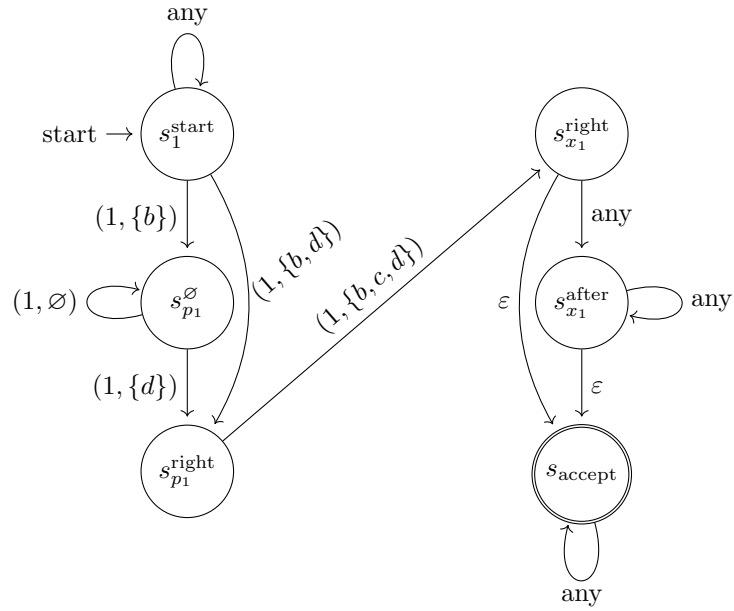
\begin{figure}[htbp!]
    \centering
    \begin{tikzpicture}[shorten >=3pt, state/.style={circle, draw, minimum size=3.5em}, auto, transform shape]
        \node[state, initial] (1start) {$s_1^{\text{start}}$};
        \node[state, below = of 1start] (p1n) {$s_{p_1}^\varnothing$};
        \node[state, below = of p1n] (p1r) {$s_{p_1}^{\text{right}}$};
        \node[state, right = 4 of 1start] (x1r) {$s_{x_1}^{\text{right}}$};
        \node[state, below = of x1r] (x1a) {$s_{x_1}^{\text{after}}$};
        \node[state, accepting, below = of x1a] (acc) {$s_{\text{accept}}$};

        \path[->]
        (1start) edge [loop above] node {any} (1start)
        (1start) edge [swap] node {$(1, \{b\})$} (p1n)
        (1start) edge [bend left, rotate=41] node {$(1, \{b,d\})$} (p1r)
        (p1n) edge [loop left] node {$(1, \varnothing)$} (p1n)
        (p1n) edge [swap] node {$(1, \{d\})$} (p1r)
        (p1r) edge [above, rotate=41] node {$(1, \{b, c, d\})$} (x1r)
        (x1r) edge node {any} (x1a)
        (x1r) edge [bend right, swap] node {$\varepsilon$} (acc)
        (x1a) edge [loop right] node {any} (x1a)
        (x1a) edge node {$\varepsilon$} (acc)
        (acc) edge [loop below] node {any} (acc)
        ;
    \end{tikzpicture}
    \caption{The automaton $M_D$ for the example data. An arrow labeled ``any'' indicates that transition may be taken by reading any symbol. A doubly-circled state is accepting. The start state is $s_1^{\text{start}}$. Roughly speaking, the three states on the left side of the diagram search for $p_1$, and the three states on the right search for $x_1$.}
    \label{fig:dataauto}
\end{figure}

This construction can be repeated for all possible data $D$ given this partial pathograph inclusion $\phi$, and the resulting automata can be combined to an automaton $M_\phi$. In Figure~\ref{fig:phiauto}, we give one possible (nondeterministic) $M_\phi$ with the required properties.

\begin{figure}[htbp!]
    \centering
    \begin{tikzpicture}[shorten >=3pt, state/.style={circle, draw, minimum size=3.5em}, auto, transform shape, node distance=2]
    \node[state] (s) {$s_{\text{start}}$};
    \node[above left = 0.4 of s] (0) {start};
    \node[state, below = of s] (x) {$s_{p_1}^{(b)}$};
    \node[state, below right = of x] (xy1) {$s_{p_1}^{\text{right}}$};
    \node[state, above right = of xy1] (y) {$s_{p_1}^{(d)}$};
    \node[state, below left = of x] (cy) {$s_{p_1\mid x_1}^{(d)}$};
    \node[state, above left = of cy] (c1) {$s_{x_1}^{\text{right}}$};
    \node[state, below left = of c1] (cx) {$s_{p_1\mid x_1}^{(b)}$};
    \node[state, accepting, below = of cy] (a) {$s_{\text{accept}}$};
    
    \path[->]
    (0) edge (s)
    (s) edge [loop above] node {any} (s)
    (s) edge [swap, bend right] node {$(1, \{b, d\});(1, \{b, c, d\})$} (c1)
    (c1) edge [swap] node {$(1, \{b\}); (1, \{b, c\})$} (cx)
    (cx) edge [loop left, below, rotate=45] node [rotate=-45] {$(1, \varnothing); (1, \{c\})$} (cx)
    (a) edge [loop below] node {any} (a)
    (cx) edge [bend right, swap] node {$(1, \{d\}); (1, \{c, d\})$} (a)
    (c1) edge [bend right, swap, below, rotate=-60] node {$(1, \{b,d\}); (1, \{b, c, d\})$} (a)
    (c1) edge node {$(1, \{d\}); (1, \{c, d\})$} (cy)
    (cy) edge [loop right] node {$(1, \varnothing); (1, \{c\})$} (cy)
    (cy) edge node {$(1, \{b\}); (1, \{b, c\})$} (a)
    (s) edge [swap] node {$(1, \{b\}); (1, \{b, c\})$} (x)
    (x) edge [loop left] node {$(1, \varnothing); (1, \{c\})$} (x)
    (x) edge [swap] node {$(1, \{d\}); (1, \{c, d\})$} (xy1)
    (xy1) edge [bend left] node {$(1, \{b, d\});(1, \{b, c, d\})$} (a)
    (s) edge [bend left, above, rotate=-60] node {$(1, \{b, d\}); (1, \{b, c, d\})$} (xy1)
    (s) edge [bend left] node {$(1, \{d\}); (1, \{c, d\})$} (y)
    (y) edge [loop right, above, rotate=45] node [rotate=-45] {$(1, \varnothing); (1, \{c\})$} (y)
    (y) edge node {$(1, \{b\}); (1, \{b, c\})$} (xy1)
    ;
    \end{tikzpicture}
    \caption{One possibility for $M_\phi$ for the example $\phi$. The automaton $M_D$ in Figure~\ref{fig:dataauto} roughly corresponds to the states $s_{\text{start}},s_{p_1}^{(b)},s_{p_1}^{\text{right}},$ and $s_{\text{accept}}$ in this automaton, aside from the fact that we have simplified the automaton to remove some unnecessary states and there are some more transitions between these states corresponding to other data (for example, making $x_1$ be nonadjacent to $c$ instead, which corresponds to the transition $(1, \{b,d\})$ from $s_{p_1}^{\text{right}}$ to $s_{\text{accept}}$).}
    \label{fig:phiauto}
\end{figure}
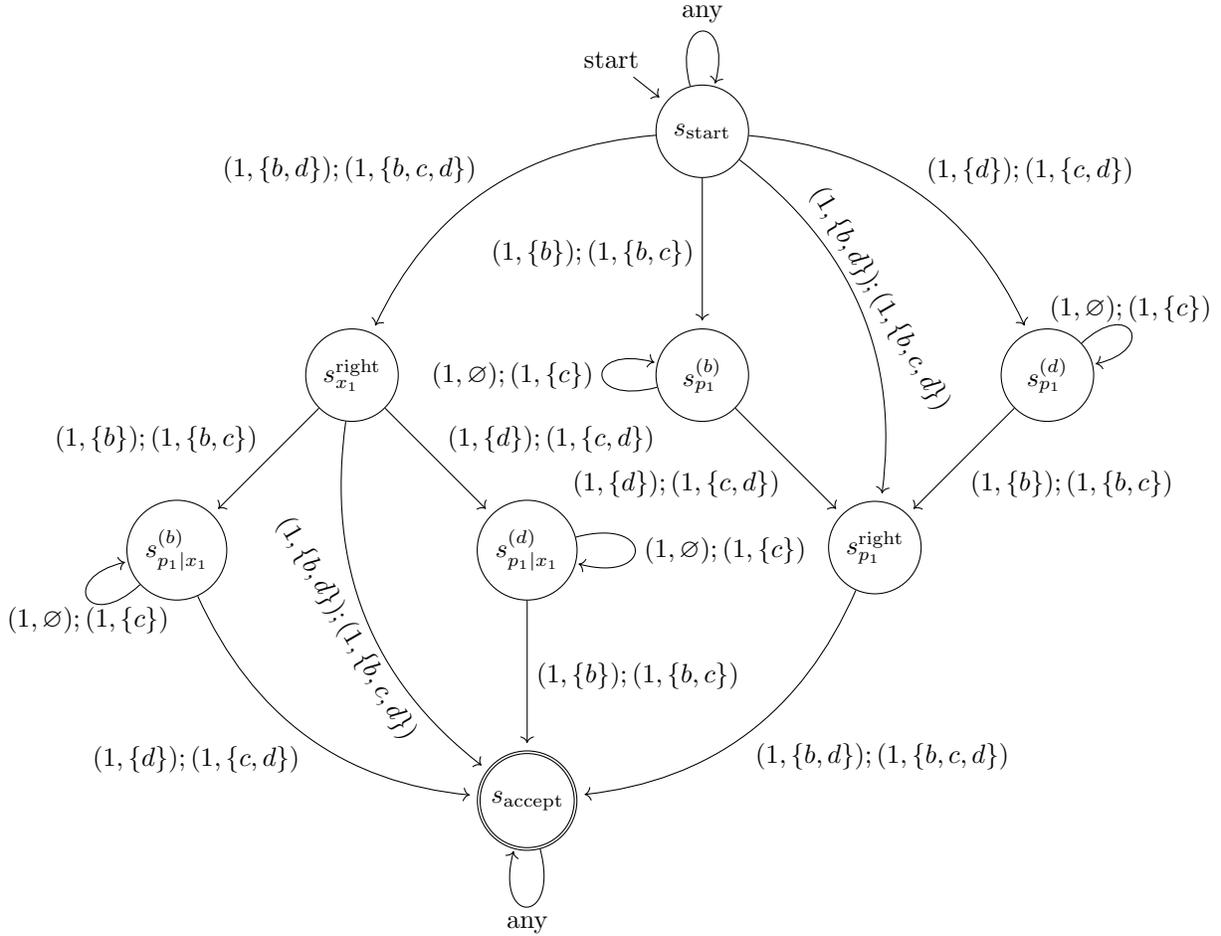

\subsection{Completing the proof}

Note that the $M_\phi$ constructed in the previous subsection do not need to have any particular behavior on strings that are not the determination string of any realization of $\Hf$. In the example at the end of the previous subsection, $M_\phi$ accepts the string
\[ (1, \{b, d\})(1, \{b, c\})(1, \{d\}) \]
and rejects the string
\[ (1, \{b\})(1, \{c\})(1, \{b\}) \]
despite the fact that these strings do not start with a vertex adjacent to $a$, do not end with a vertex adjacent to $c$, and have a vertex adjacent to $c$ somewhere in the middle. Now let us complete the proof of Theorem~\ref{thm:automaton}. The only major step left is to deal with these ``ill-formed'' strings.

\begin{proof}[Proof of Theorem~\ref{thm:automaton}]
Let the urpaths of $\Hf$ be $u_1, \dots, u_K$ with endpoints $(a_1, b_1), \dots, (a_K, b_K)$. Say a string over $\Sigma_2$ is \textit{ill-formed} if any of the following hold:
\begin{itemize}
    \item There is any symbol of the form $(i, \lambda)$ occurring after a symbol of the form $(j, \lambda')$ with $j<i$.
    \item For some $i$, there is no symbol of the form $(i, \lambda)$.
    \item For some $i$, the first symbol of the form $(i, X)$ has $a_i\not\in X$ or the last symbol of the form $(i, X)$ has $b_i\not\in X$.
    \item For some $i$, some symbol of the form $(i, X)$ has $a_i\in X$ or $b_i\in X$, other than the first symbol or last symbol of the form $(i, X)$, respectively.
\end{itemize}
It is easy to see that the language $L_I$ of ill-formed strings is a regular language, and if $\sigma$ is not ill-formed, it is the determination string of a realization of $\Hf$.

Consider the regular language
\[ L=L_I\cup\bigcup_{\Ff\in \Fc}\bigcup_{\substack{\text{partial pathograph inclusion} \\ \phi:\Ff\to H\cup\{\ud\}}} L_\phi. \]
Then $L$ contains precisely the ill-formed strings and the determination strings of realizations of $\Hf$ that are not $\Fc$-free. Let $M$ be a deterministic finite automaton recognizing the complement of $L$ (which is also a regular language). This $M$ accepts precisely the determination strings of $\Fc$-free realizations of $\Hf$, as desired.
\end{proof}

\end{document}